\newcommand{\calM}{{\mathcal M}}
\newcommand{\calN}{{\mathcal N}}
\newcommand{\calS}{{\mathcal S}}
\newcommand{\bbR}{{\mathbb R}}
\newcommand{\brk}[1]{\left(#1\right)}          
\newcommand{\Brk}[1]{\left[#1\right]}          
\newcommand{\Cases}[1]{\begin{cases} #1 \end{cases}}
\newcommand{\beq}{\begin{equation}}
\newcommand{\eeq}{\end{equation}}
\newcommand{\bsplit}{\begin{split}}
\newcommand{\esplit}{\end{split}}
\newcommand{\baligned}{\begin{aligned}}
\newcommand{\ealigned}{\end{aligned}}
\newcounter{sect}
\newenvironment{dingnum}{
\begin{dingautolist}{192}\setlength{\itemsep}{-2pt}}{\end{dingautolist}}
\providecommand{\R}{\bbR}
\newcommand{\Textand}{\qquad\text{ and }\qquad}
\newcommand{\e}{\varepsilon}
\newcommand{\W}{\Omega}
\newcommand{\G}{\Gamma}
\newcommand{\tI}{\widetilde{I}}
\newcommand{\vp}{\varphi}
\newcommand{\weakly}[1]{\stackrel{#1}{\rightharpoonup}}
\newcommand{\strongly}[1]{\stackrel{#1}{\longrightarrow}}
\newcommand{\GC}{\strongly{\Gamma}}
\newcommand{\wt}[1]{\widetilde{#1}}
\newcommand{\N}{{\boldsymbol{\calN}}}
\newcommand{\dist}{\operatorname{dist}}
\newcommand{\Vol}{\operatorname{Vol}}
\newcommand{\NW}{W^\perp}
\newcommand{\metric}{\mathfrak{g}}
\newcommand{\euc}{\mathfrak{e}}
\newcommand{\go}{{\metric}}
\newcommand{\tgo}{\tilde{\go}}
\newcommand{\q}{{\mathfrak{q}}}
\newcommand{\qo}{{\q}}
\newcommand{\Volume}{d{\text{vol}_{\go}}}
\newcommand{\VolumeS}{d{\text{vol}_{\go|_\S}}}
\newcommand{\Volumet}{d{\text{vol}_{\tilde\go}}}
\renewcommand{\S}{{\calS}}
\newcommand{\NS}{{\calN\S}}
\newcommand{\id}{\operatorname{Id}}
\newcommand{\Ppar}{{P^\parallel}}
\newcommand{\Pperp}{{P^\perp}}
\newcommand{\ipar}{{\iota^\parallel}}
\newcommand{\iperp}{{\iota^\perp}}
\newcommand{\qperp}{\qo^\perp}
\newcommand{\opar}{\omega^\parallel}
\newcommand{\operp}{\omega^\perp}
\newcommand{\dpar}{D^\parallel}
\newcommand{\dperp}{D^\perp}
\newcommand{\piPush}{\pi_\star}
\newcommand{\im}{\text{Im}}
\def\Xint#1{\mathchoice
{\XXint\displaystyle\textstyle{#1}}%
{\XXint\textstyle\scriptstyle{#1}}%
{\XXint\scriptstyle\scriptscriptstyle{#1}}%
{\XXint\scriptscriptstyle\scriptscriptstyle{#1}}%
\!\int}
\def\XXint#1#2#3{{\setbox0=\hbox{$#1{#2#3}{\int}$ }
\vcenter{\hbox{$#2#3$ }}\kern-.6\wd0}}
\newcommand{\dashint}{\Xint-}
\DeclareMathOperator*{\limarrow}{\longrightarrow}
\renewenvironment{proof}{{\flushleft \emph{Proof}:}}{\hfill\ding{110}}
\numberwithin{equation}{section}
\numberwithin{theorem}{section}
\numberwithin{definition}{section}
\numberwithin{lemma}{section}
\numberwithin{corollary}{section}
\numberwithin{proposition}{section}
\begin{document}

\title{A Riemannian approach to the membrane limit in non-Euclidean elasticity\thanks{
This research was partially supported by the Israel Science Foundation and by the Israel-US Binational Science Foundation.}
}

\titlerunning{The membrane limit in non-Euclidean elasticity}        

\author{Raz Kupferman         \and
        Cy Maor 
}


\institute{Raz Kupferman \at
              Institute of Mathematics, The Hebrew University, Jerusalem Israel 91904 \\
              \email{raz@math.huji.ac.il}           
           \and
           Cy Maor \at
              Institute of Mathematics, The Hebrew University, Jerusalem Israel 91904 \\
              \email{cy.maor@mail.huji.ac.il} }

\date{Received: date / Accepted: date}

\maketitle


\begin{abstract}

Non-Euclidean, or incompatible elasticity is an elastic theory for pre-stressed materials, which is based on a modeling of the elastic body as a Riemannian manifold. In this paper
we derive a dimensionally-reduced model of the so-called membrane limit of a thin incompatible body.
By generalizing classical dimension reduction techniques to the Riemannian setting, we are able to prove a general theorem that applies to an elastic body of arbitrary dimension, arbitrary slender dimension, and arbitrary metric. 
The limiting model implies the minimization of an integral functional defined over immersions of a limiting submanifold in Euclidean space. The limiting energy only depends on the first derivative of the immersion, and for frame-indifferent models, only on the resulting pullback metric induced on the submanifold, i.e., there are no bending contributions.

\end{abstract}

\keywords{Riemannian manifolds \and Nonlinear elasticity \and Incompatible elasticity \and Membranes \and Gamma-convergence}

\section{Introduction}

In recent years there has been a renewed interest in the elastic properties of bodies that have an intrinsically non-Euclidean geometry. The original interest in such systems stemmed from the study of crystalline defects, in which case the intrinsic geometry exhibits singularities; see  Bilby and co-workers \cite{BBS55,BS56}, Kondo \cite{Kon55}, Wang \cite{Wan67}, and Kr\"oner \cite{Kro81}. The motivation for the recent interest in non-Euclidean bodies arises from the study of growing tissues \cite{ESK08,AESK11,AAESK12}, thermal expansion \cite{OY10}, and other mechanisms of differential expansion of shrinkage \cite{KES07}; in all these examples  the intrinsic geometry can be assumed to be smooth.

Mathematically, we model an elastic body as a three-dimensional Riemannian manifold, $(\calM,\go)$, equipped with an energy function that assigns an energy to every configuration $f:\calM\to\R^3$ of the manifold into the ambient Euclidean space, $(\R^3,\euc)$. This energy is a measure of the strain, i.e., of the deviation of the pullback metric $f^\star\euc$ from the intrinsic metric $\go$. The body is said to be non-Euclidean if the intrinsic metric has non-zero Riemannian curvature, in which case it cannot be immersed isometrically in three-dimensional Euclidean space. The elastostatic problem consists of finding the configuration $f$ that minimizes the elastic energy given possibly boundary conditions and external forces.

A central theme in material sciences is the derivation of dimensionally-reduced models, which are applicable to elastic bodies that display one or more slender axes. In such models the elastic body is viewed as a lower-dimensional limit of thin bodies (which can be viewed as the mid-surface).
The derivation of dimensionally reduced models goes back to 
Euler, D. Bernoulli, Cauchy, and Kirchhoff \cite{Kir50}, and in the last century, to name just a few, to von Karman \cite{Kar10}, E. and F. Cosserat, Love \cite{Lov27}, and Koiter \cite{Koi66}. 

Dimensionally-reduced models are commonly classified according to two main criteria: the dimension of the limiting manifold (which may be either 1 or 2) and the energy scaling of the reduced energy functional. Plates and shells are examples of two-dimensional reduced models in which the limiting manifold can be embedded in $\R^3$ smoothly enough so that the main energy contribution comes from the second fundamental form (bending effects).  
Membranes are examples of two-dimensional reduced models in which the main energy contribution is from metric deviations of 
the two-dimensional pullback metric from the metric of the limiting manifold (stretching effects). Rods are examples of one-dimensional reduced models. 

Until about 20 years ago, dimension reduction analyses were based  essentially on formal asymptotic methods and uncontrolled ansatzes. 
The rigorous derivation of dimensionally-reduced models was first achieved in the Euclidean case, where the bodies have a natural rest configuration with respect to which deviations can be measured. The membrane limit was derived by Le Dret and Raoult \cite{LR95,LR96}, and generalized by Braides et al. \cite{BFF00} and Babadjian and Francfort \cite{BF05}, whereas the plate and shells limits were derived by James et al. \cite{FJM02b} and \cite{FJMM03}. The rod limit was derived by Mora and M\"uller \cite{MM03}. For non-Euclidean bodies the limiting plate theory was derived by Lewicka and Pakzad \cite{LP10}, whereas Kupferman and Solomon \cite{KS14} proved a general theorem that yields plate, shell and rod limits in non-Euclidean cases.  
All the above mentioned work relies on $\G$-convergence techniques \cite{Dal93}.

In this work we derive the membrane limit of non-Euclidean elasticity. A typical application of such limit would be the study of a thin plant tissue under stretching conditions. We consider here pure displacement problems without body forces; the inclusion of external forces and/or surface traction is not expected to involve any complications \cite{LR95}.

We now describe our main results;  precise definitions and formulations are given in the next section. 
We denote by $\W_h$ a family of $n$-dimensional submanifolds of an $n$-dimensional manifold $(\calM,\go)$ that converge to an $(n-k)$-dimensional manifold $\S$; here $h$ is the thickness of the domains. With every configuration $f_h:\W_h\to\R^n$ (which is regular enough and satisfies the boundary conditions) we associate an energy
\[
I_h(f_h) = \dashint_{\W_h} W(df_h)\,\Volume,
\]
where $\dashint$ denotes volume average, and $W$ is an $h$-independent energy density satisfying some regularity, growth and coercivity conditions, as well as a homogeneity condition. Considering  pure displacement problems, we prove that  $I_h$ $\G$-converges as $h\to0$ to a functional that assigns, to regular enough configurations $F:\S\to\R^n$ that satisfy the boundary conditions, an energy
\[
I(F) = \dashint_\S QW_0(dF)\,\VolumeS,
\]
where $QW_0$ is the quasiconvex envelope of a  projection of the restriction of $W$ to $\S$.
Moreover, every sequence $f_h$ of (possibly approximate) minimizers of $I_h$ has a subsequence that converges to a  minimizer of $I$. 

The basic tools  are the analytic techniques developed in \cite{LR95} along with the geometric framework developed in \cite{KS14}. 
The main difference between our analysis and that in \cite{LR95} is that the current analysis applies to an arbitrary Riemannian setting and to arbitrary dimensions. As such, it does not distinguish a priori between ``plate-like",  ``shell-like"  or ``rod-like" bodies, and neither between Euclidean and non-Euclidean geometries. 

Moreover, the Riemannian setting reveals the geometric content of classical notions in elasticity and analysis.
It requires the revision and the generalization of the notions of quasiconvexity, measurable selection theorems, and Carath\'eodory functions. 
In additon, the material science notion of homogeneity needs to be reinterpreted, which leads to new insights into its geometric meaning. 
Finally, the geometric analysis ``toolbox" constructed in \cite{KS14} is expanded to treat different function spaces and more general energy densities.

\section{Problem statement and main results}

\subsection{Modeling of slender bodies}

We start by presenting the general geometric framework.
Let $\calM$ be a smooth $n$-dimensional oriented manifold; let $\S\subset\calM$ be a smooth
 $m$-dimensional compact oriented submanifold with Lipschitz-continuous boundary; let $k$ denote the codimension of $\S$ in $\calM$.
We endow $\calM$ with a metric $\go$, and denote the induced metric on $\S$ by $\go|_\S$.

We view $T\S$ as a sub-bundle of $T\calM|_\S$, and denote its orthogonal complement, the normal bundle of $\S$ in $\calM$, by $\NS$, so that
\[
T\calM|_\S \cong T\S \oplus \NS.
\]
 
Let $h$ be a continuous positive parameter, and define a family of tubular neighborhoods of $\S$ by
\[
\W_h = \{ p\in \calM : \dist(p,\S)<h \}\subset \calM.
\]
These tubular neighborhoods inherit the metric $\go$. 
Our smoothness and compactness assumptions on $\S$ imply that for small enough $h$ (say $h\in(0,h_0]$ for some $h_0>0$) the exponential map,
\[
\exp : \{ (p,\xi) \in \NS : |\xi| < h \} \to \W_h
\]
is a diffeomorphism between an open subset of $\NS$ and $\W_h$.
Therefore, we have a structure of fiber bundle $\pi:\W_h \to \S$, with the fiber being a $k$-dimensional ball, and the projection $\pi$ is obtained by moving along the geodesic from a point $p\in \W_{h}$ to its nearest neighbor in $\S$.

\subsection{Configurations and boundary conditions}

We view $\W_h$ as a family of (shrinking) bodies. 
A \emph{configuration} of $\W_h$ is a map $f_h:\W_h\to\R^n$ from the so-called material manifold $(\W_h,\go)$ to the physical space $(\R^n,\euc)$, where $\euc$ is the Euclidean metric.

In the elastic context, 
we consider a \emph{pure displacement} problem, where the boundary conditions are imposed on the ``outer ring" of $\W_h$, 
\[
\G_h = \{\xi\in\W_h:\,\, \pi(\xi) \in\partial\S\}.
\]
A sketch of the manifolds $\calM$, $\W_h$, $\S$ and the boundary manifolds $\G_h$ and $\partial\S$ are shown in Figure~\ref{fig:0}. 

\begin{figure}
\begin{center}
\includegraphics[height=2.4in]{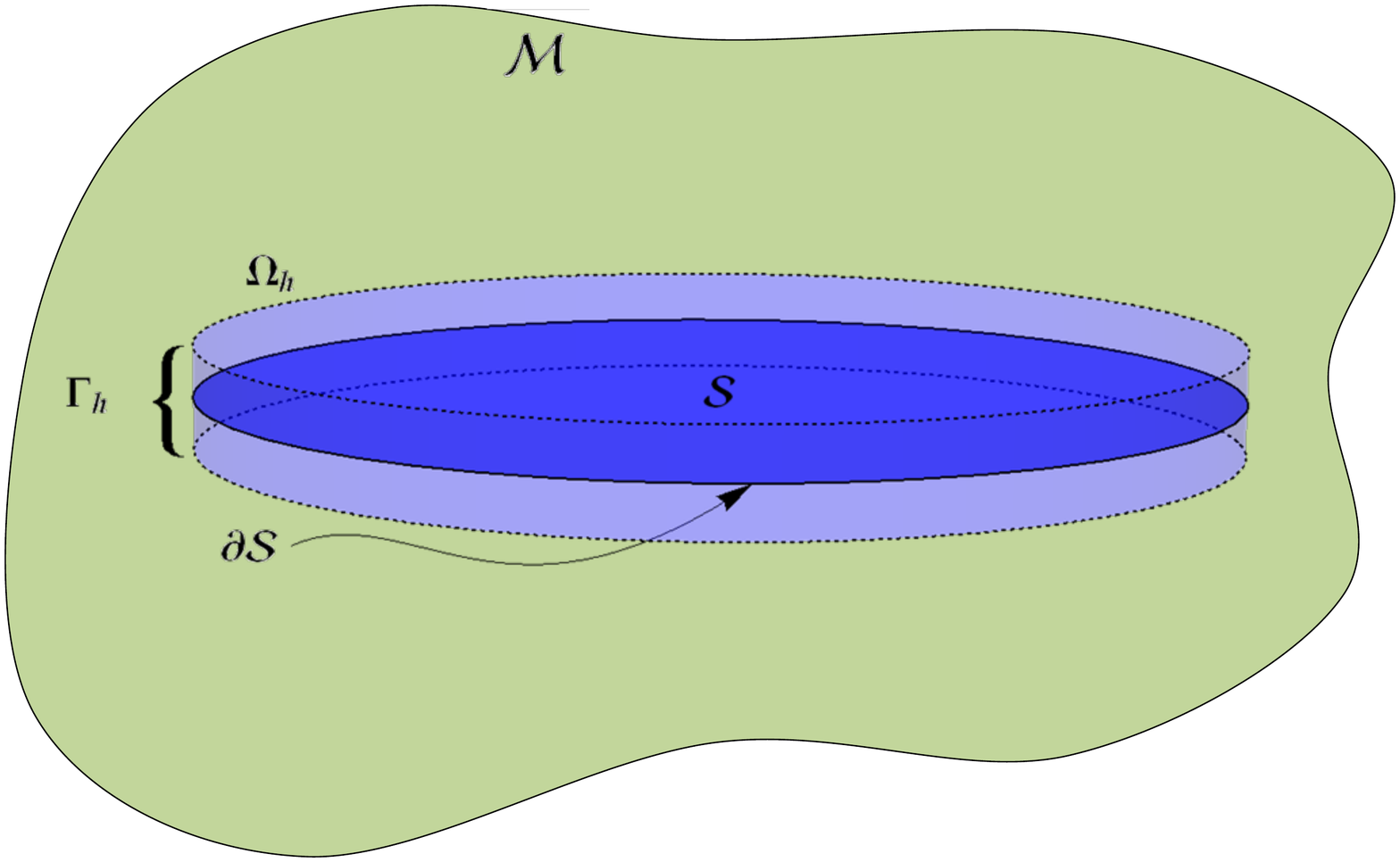}
\end{center}
\caption{Schematic illustration of the manifolds $\calM$, $\W_h$, $\S$, $\G_h$ and $\partial\S$.}
\label{fig:0}
\end{figure}

We impose the boundary conditions by specifying a mapping of $\partial\S$ into $\R^n$, and extending it linearly to $\Gamma_h$ via a mapping of normal vectors. 
Specifically, 
let $F_{bc}$ be a mapping $\partial\S\to \R^n$ and $\qperp_{bc}$ be a section of $(\NS^*\otimes\R^n)|_{\partial\S}$ (an assumption on the regularity of these mappings will be imposed later). A mapping $f_h:\W_h\to\R^n$ satisfies the boundary conditions if
\beq
f_h(\xi) = F_{bc}(\pi (\xi)) + (\qperp_{bc})_{\pi(\xi)}(\xi)   \qquad  \xi\in\G_h,
\label{eq:f_bc}
\eeq
where we identify $\xi\in\G_h$ with its image under the diffeomorphism of $\W_h$ to an open set in $\NS$.

The condition \eqref{eq:f_bc} can be written in a more compact form. For a section $\qperp\in\G(\S;\NS^*\otimes\R^n)$, the pullback $\pi^*\qperp$ is a section in $\G(\W_h;\pi^*\NS^*\otimes\R^n)$, and
\[
(\pi^*\qperp)_\xi(\eta) = \qperp_{\pi(\xi)}(\eta),
\]
where $\xi\in\W_h$ and $\eta\in(\pi^*\NS)_\xi \cong \NS_{\pi(\xi)}$. With these identifications we can write $(\qperp_{bc})_{\pi(\xi)}(\xi) = (\pi^*\qperp_{bc})_\xi(\xi)$.  Introducing the tautological section $\lambda\in\G(\W_h;\pi^*\NS)$ defined by
\[
\lambda_\xi = \xi\in (\pi^*\NS)_\xi,
\]
we can write \eqref{eq:f_bc} as follows:
\beq
f|_{\G_h} = F_{bc}\circ\pi + \pi^*\qperp_{bc}\circ\lambda.
\label{eq:f_bc2}
\eeq

\subsection{The energy functional}

The assumption whereby the bodies $\W_h$ are 
hyper-elasticity means that to each admissible (in a sense to be made precise below) configuration $f_h$ corresponds an elastic energy of the form
\beq
\dashint_{\W_h} W(df_h)\,\Volume,
\label{eq:E}
\eeq
where
\[
W : T^*\W_{h_0}\otimes\R^n \to \R.
\]
is an \emph{elastic energy density}; note that $W$ is independent of $h$.

For $q\in T^*\W_{h_0}\otimes\R^n$, we denote by $|q|$ the norm that is inherited from both $\go$ and $\euc$. 
We assume  $W$ to be continuous, and that there exists a $p\in(1,\infty)$ such that:
\begin{enumerate}
\item Growth condition: $|W(q)| \le C(1+| q |^p)$,
\item Coercivity: $W(q) \ge \alpha| q|^p - \beta$,
\item Lipschitz property: for $q,q'\in T_x^*\W_{h_0}\otimes\R^n$,
\[
|W(q)-W(q')| \le C(1+ | q|^{p-1} + |q'|^{p-1})|q-q'|,
\]
\item Homogeneity over fibers, which will be defined in the next section. 
When $(\W_h,\go)$ is Euclidean, this condition amounts to $W:\W_h\times \R^{n\times n}\to\R$ being in fact a mapping $\S\times\R^{n\times n}\to\R$, i.e., the spatial dependence of the energy density only depends on the projection on the mid-surface.   
\end{enumerate}
Under these conditions, the total elastic energy \eqref{eq:E} is defined for all $f_h\in W^{1,p}(\W_h;\R^n)$.

A prototypical energy density that satisfies these conditions for $p=2$ is 
\beq
W(\cdot)=\dist^2(\cdot, \text{SO}(\W_{h_0};\R^n)),
\label{eq:Wproto}
\eeq
where $\text{SO}(\W_{h_0};\R^n)$ denotes the metric and orientation preserving transformations $T\W_{h_0} \to \R^n$. This energy density measures how far is a local configuration  from being a local isometry.
Note however that we do not assume $W$ to satisfy frame-indifference or isotropy.

An example for a density that is non-homogeneous over fibers can be obtained by multiplying $W$ in \eqref{eq:Wproto} by a non-constant function $f:\W_{h_0} \to (0,\infty)$ that depends only on the distance from $\S$. This energy density also measures how far is a local configuration from being a local isometry. However, the measure is different in different ``layers". The membrane limit in this case is different from the one addressed in this paper; see discussion.

Note also that the more physical case where $\lim_{\det(q)\to 0} W(q) = \infty$ and $W(q) = \infty$ for singular or orientation reversing transformations is not covered by the analysis in this paper, as it does not satisfy the growth condition.

The space of admissible configurations is defined by requiring that \eqref{eq:E} be well-defined, as well as the satisfaction of the boundary conditions \eqref{eq:f_bc2}. We denote:
\[
W_{bc}^{1,p}(\W_h;\R^n) = \{ f\in W^{1,p}(\W_h;\R^n) :  f|_{\G_h} = F_{bc}\circ\pi + \pi^*\qperp_{bc}\circ\lambda\}.
\]
We assume that $F_{bc}$ and $\qperp_{bc}$ are regular enough such that the spaces $W_{bc}^{1,p}(\W_h;\R^n)$ are not empty for small enough $h$. Note that each $W_{bc}^{1,p}(\W_h;\R^n)$ is an affine space with respect to the vector space $\{ f\in W^{1,p}(\W_h;\R^n) :  f|_{\G_h} = 0 \}$.

For technical reasons it is convenient to extend the domain of the energy functional to configurations $L^p(\W_h;\R^n)$ that may not satisfy either regularity or boundary conditions as follows:  
\beq
I_h(f)=
\begin{cases}
\dashint_{\W_h} W(df)\, \Volume & f\in W_{bc}^{1,p}(\W_h;\R^n) \\ 
\infty & \text{otherwise},
\end{cases}
\label{eq:Ih}
\eeq
where $\dashint$ denotes a volume average, namely
\[
\dashint_{\W_h} \alpha\,\Volume = \frac{\int_{\W_h} \alpha\,\Volume}{\int_{\W_h} \Volume}.
\]

\subsection{Main result}

We now define an energy density for configurations of the mid-surface $F:\S\to\R^n$. 
The restriction $W|_\S$ is a map $T^*\W_h|_\S\otimes\R^n \to \R$, which we may identify with a map
$(T^*\S\oplus\NS^*)\otimes\R^n\to\R$. We then define
\[
W_0 :  T^*\S\otimes \R^n \to \R
\]
as follows:
\[
W_0(q) = \min_{r\in (\NS^*\otimes\R^n)_{\pi(q)}} W|_\S(q\oplus r).
\]
Note that the coercivity condition on $W$ implies that the minimum is indeed attained.
Let $QW_0 :  T^*\S\otimes \R^n \to \R$ be the quasiconvex envelope of $W_0$ (for more details on quasiconvex functions in a Riemannian setting see next section). The growth condition imposed on $W$ implies that $W_0$ and $QW_0$ satisfy similar conditions (see Lemma \ref{lm:w0} and Corollary \ref{cy:qw0} below).

We are now ready to state our main result:

\begin{theorem}
\label{tm:main}
The sequence of functionals $(I_h)_{h\le h_0}$ $\Gamma$-converges in the strong $L^p$ topology,  as $h\to 0$, to a limit $I:L^p(\S;\R^n)\to\R$ defined by:
\[
I(F) = \Cases{\dashint_\S QW_0(dF)\,\VolumeS &  F\in W_{bc}^{1,p}(\S;\R^n), \\
\infty & \text{otherwise},}
\]
where $W_{bc}^{1,p}(\S;\R^n) = \{F\in W^{1,p}(\S;\R^n) : F|_{\partial\S} = F_{bc}\}.$
\end{theorem}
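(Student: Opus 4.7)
The plan is to follow the Le Dret--Raoult program as adapted to the Riemannian fibre-bundle setting of \cite{KS14}, splitting the $\Gamma$-convergence statement into a lower bound ($\Gamma$-liminf) inequality and the construction of a recovery sequence ($\Gamma$-limsup). Throughout I exploit the fibration $\pi:\W_h\to\S$ to reduce the analysis on the shrinking domains $\W_h$ to a fixed analysis on $\S$. As a preliminary compactness step, any sequence $f_h\in W^{1,p}_{bc}(\W_h;\R^n)$ with $\sup_h I_h(f_h)<\infty$ gives, via the coercivity bound and a fibre-wise Poincar\'e inequality (using the exponential diffeomorphism from an open subset of $\NS$), a bounded family of restrictions $F_h:=f_h|_\S$ in $W^{1,p}(\S;\R^n)$ converging, up to a subsequence, strongly in $L^p$ to some $F\in W^{1,p}_{bc}(\S;\R^n)$, while $f_h-F_h\circ\pi\to 0$ in a suitably rescaled sense. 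This reduces both $\Gamma$-inequalities to an analysis of fibre-constant limits.

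For the $\Gamma$-liminf I decompose $df_h|_\S$, viewed as an element of $(T^*\S\oplus\NS^*)\otimes\R^n$, into its tangential and normal components $dF_h\oplus\Pperp df_h$. By the very definition of $W_0$,
\[
W_0(dF_h)\ \le\ W|_\S\bigl(dF_h\oplus\Pperp df_h\bigr),
\]
and the fibre-homogeneity hypothesis on $W$ together with the Fermi-coordinate expansion of the volume form, $\Volume=(1+O(h))\,dr\wedge\VolumeS$, give
\[
\dashint_{\W_h} W(df_h)\,\Volume\ \ge\ \dashint_\S W_0(dF_h)\,\VolumeS - o(1).
\]
Passing to the quasiconvex envelope via the Riemannian lower-semicontinuity theorem for quasiconvex integrands on a manifold (the generalization of Acerbi--Fusco / Le Dret--Raoult developed in the preceding sections) then yields $\liminf_h I_h(f_h)\ge\dashint_\S QW_0(dF)\,\VolumeS=I(F)$.

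For the $\Gamma$-limsup I would build an explicit recovery sequence. By a density argument it suffices to treat $F\in W^{1,\infty}_{bc}(\S;\R^n)$. Applying the Riemannian measurable-selection theorem to pick, for each $q\in T^*\S\otimes\R^n$, a section $r^\sharp(q)\in(\NS^*\otimes\R^n)_{\pi(q)}$ realizing $W_0(q)=W|_\S(q\oplus r^\sharp(q))$, a natural candidate is
\[
\tilde f_h\ =\ F\circ\pi + \pi^*r^\sharp(dF)\circ\lambda,
\]
which automatically matches the boundary prescription \eqref{eq:f_bc2} for a compatible choice of $\qperp_{bc}$. On a Vitali covering of $\S$ by small balls on which $F$ is well approximated by its affine part, one perturbs $\tilde f_h$ by standard quasiconvexification test fields compactly supported in $\W_h$, and a cut-off near $\G_h$ restores the prescribed boundary condition. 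This produces $f_h\to F$ in $L^p$ with $\limsup_h I_h(f_h)\le I(F)$.

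The principal obstacle is the geometric implementation of the fibre-homogeneity assumption: classically this is phrased on the Euclidean cylinder $\S\times B^k$, whereas here $\W_h$ is a genuinely curved fibre bundle. A parallel-transport trivialization along normal geodesics, combined with the $h\to0$ expansion of $\go$ in Fermi coordinates, reduces the non-Euclidean integrals to perturbations of product-type integrals by factors $1+O(h)$ that drop out in the limit. The measurable-selection and quasiconvexification steps on $\S$ similarly depend on the Riemannian upgrades announced earlier, rather than on direct appeals to the Euclidean theory; once these are in place, the convergence of (approximate) minimizers follows from the $\Gamma$-convergence and the equicoercivity obtained in the compactness step.
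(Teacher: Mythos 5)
Your overall program (Le Dret--Raoult adapted via the fibration $\pi:\W_h\to\S$, split into liminf and limsup) is the right one, but two of your central steps have genuine gaps. First, both your compactness step and your liminf argument hinge on the restriction $F_h:=f_h|_\S$ and on $df_h|_\S$. Since $\S$ has codimension $k$ in $\W_h$, this trace is not defined for a general $W^{1,p}$ function (e.g.\ $p=2$, $k=2$: no trace on a curve in $\R^3$), and the theorem is claimed for arbitrary $p$ and $k$. The paper avoids traces entirely: it rescales, extracts a weak $W^{1,p}(\W_{h_0};\R^n)$ limit of $f_h\circ\mu_h$, and identifies that limit as $F\circ\pi$ by showing its normal derivative vanishes (Lemmas \ref{lm:dfi=0} and \ref{lm:wk w1p conv}). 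Second, and more seriously, your displayed inequality $\dashint_{\W_h}W(df_h)\,\Volume\ \ge\ \dashint_\S W_0(dF_h)\,\VolumeS-o(1)$ does not follow from fibre homogeneity: homogeneity gives a \emph{pointwise} bound $W(d_\xi f_h)\ge W_0\bigl(q^\parallel_h(\xi)\bigr)+O(h)$ where the tangential part $q^\parallel_h(\xi)$ still depends on the fibre coordinate $\xi$, and since $W_0$ is not convex you cannot replace the fibre average of $W_0(q^\parallel_h(\cdot))$ by $W_0$ of a single object on $\S$. The paper's resolution is to keep the integral over the thin domain, replace $W_0$ by $QW_0$, and prove that the degenerate integrand $\pi^*QW_0\circ\sigma^*$ on $T^*\W_{h_0}\otimes\R^n$ is itself quasiconvex (Lemma \ref{lm:qw0sigma qc}); only then does weak lower semicontinuity in $W^{1,p}(\W_{h_0};\R^n)$ applied to $f_h\circ\mu_h\weakly{}F\circ\pi$ produce $I(F)$. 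Your proposal has no substitute for this lemma, and applying the lower-semicontinuity theorem on $\S$ instead presupposes the ill-defined $F_h$.

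On the limsup side your candidate $\tilde f_h=F\circ\pi+\pi^*r^\sharp(dF)\circ\lambda$ with $r^\sharp(dF)$ the measurable selection is generally not in $W^{1,p}(\W_h;\R^n)$: its tangential differential involves derivatives of a merely measurable section. The paper instead tests with \emph{smooth} sections $\qperp$ agreeing with $\qperp_{bc}$ on $\partial\S$, obtains $I_0(F)\le\dashint_\S W|_\S(dF\oplus\qperp)\,\VolumeS$ for each such $\qperp$, passes to the infimum over $L^p$ sections by density and the Lipschitz property of $W$, and only then invokes measurable selection; the final descent from $W_0$ to $QW_0$ is done abstractly, by combining the lower semicontinuity of the $\Gamma$-limit $I_0$ with the Acerbi--Fusco relaxation theorem, rather than by an explicit Vitali/oscillation construction. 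Your explicit construction could in principle be made to work, but note also that $\qperp_{bc}$ is prescribed data, so the normal correction must match it on $\G_h$ rather than being chosen for convenience.
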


Note that each $I_h$ is defined over a different functional space, which requires either rescaling of $I_h$, or a slight modification in the definition of $\G$-convergence; these approaches are equivalent, and we use the second one (see next section).

The coercivity of the energy density $W$ further enables us to prove the following natural corollary of $\G$-convergence, which implies that $I$ can be viewed an $(n-k)$-dimensional approximation to the $n$-dimensional elastic functional $I_h$ for small $h$, in the following sense:

\begin{corollary}
\label{main cor}
Let $f_h\in W_{bc}^{1,p}(\W_h;\R^n)$ be a sequence of (approximate) minimizers of $I_h$, that is, 
\[
I_h(f_h) = \inf_{L^p(\W_h;\R^n)} I_h(\cdot) + r(h),
\]
where $\lim_{h\to0} r(h) = 0$.
Then $(f_h)$ is a relatively compact sequence (with respect to the strong $L^p$ topology), and all its limits points are minimizers of $I$.
Moreover,
\[
\lim_{h\to0} \inf_{L^p(\W_h;\R^n)} I_h(\cdot) = \min_{L^p(\S;\R^n)} I(\cdot).
\]
\end{corollary}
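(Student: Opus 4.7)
The plan is to invoke the standard machinery of $\G$-convergence: combined with equi-coercivity, it gives convergence of minimum values and of (sub)sequences of approximate minimizers to minimizers of the limit functional. Since \thmref{tm:main} already supplies the $\G$-convergence, the main additional ingredient needed is equi-coercivity of the family $(I_h)$ in the variable-domain sense used to formulate \thmref{tm:main}.

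For equi-coercivity, suppose $I_h(f_h) \le C<\infty$ along some (sub)sequence. The coercivity hypothesis $W(q)\ge \alpha|q|^p-\beta$ gives
\[
\alpha \dashint_{\W_h} |df_h|^p \,\Volume \;\le\; I_h(f_h) + \beta \;\le\; C+\beta ,
\]
so $\|df_h\|_{L^p(\W_h;\R^n)}^p/\Vol(\W_h)$ is bounded uniformly in $h$. Combined with the prescribed boundary values on $\G_h$ from \eqref{eq:f_bc2}, a Poincar\'e-type estimate on the tubular neighborhood $\W_h$, of the kind used in \cite{KS14} and which scales correctly under $h\to 0$, yields a uniform bound on the volume-averaged $W^{1,p}$-norm of $f_h$. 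One then applies the push-forward $\piPush$ (or an equivalent fiber-averaging device along the projection $\pi:\W_h\to\S$) to associate to $(f_h)$ a sequence in $W^{1,p}(\S;\R^n)$ that is bounded and hence relatively compact in $L^p(\S;\R^n)$. This produces a subsequential limit $F\in L^p(\S;\R^n)$, in exactly the notion of convergence across varying domains for which $\G$-convergence was proved.

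With equi-coercivity in hand, the remainder is routine. Pick approximate minimizers $f_h$ with $I_h(f_h)\le \inf I_h+r(h)$ and extract a subsequence converging to some $F\in L^p(\S;\R^n)$. The $\G$-$\liminf$ inequality gives
\[
I(F)\;\le\; \liminf_{h\to 0} I_h(f_h) \;=\; \liminf_{h\to 0}\inf I_h .
\]
For any competitor $G\in W_{bc}^{1,p}(\S;\R^n)$ the $\G$-$\limsup$ inequality supplies a recovery sequence $g_h$ with $I_h(g_h)\to I(G)$, and since $f_h$ are approximate minimizers,
\[
\limsup_{h\to 0} I_h(f_h) \;\le\; \limsup_{h\to 0}\bigl(I_h(g_h)+r(h)\bigr)\;=\; I(G).
\]
Taking the infimum over $G$ and combining with the lower bound gives $I(F)=\lim_{h\to 0}\inf I_h = \inf I$, so $F$ is a minimizer. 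Since every subsequence of $(f_h)$ contains a further subsequence converging to some minimizer, the original sequence $(f_h)$ is itself relatively compact in $L^p$ with every limit point a minimizer of $I$.

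The main obstacle is the equi-coercivity step: the bodies $\W_h$ shrink with $h$, so one needs a Poincar\'e-type inequality whose constant behaves correctly with respect to the volume-averaged norm, and this must be dovetailed with the push-forward framework so that the resulting limit naturally lives on $\S$ in the variable-domain topology for which \thmref{tm:main} is stated. Once that piece is in place, the corollary reduces to the fundamental theorem of $\G$-convergence.
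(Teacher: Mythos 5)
Your proposal is correct and follows essentially the same route as the paper: boundedness of $\inf I_h$ via a recovery sequence for some admissible competitor, coercivity plus the Poincar\'e inequality for a uniform volume-averaged $W^{1,p}$ bound, compactness over the shrinking neighborhoods, and then the standard two-sided $\Gamma$-convergence argument with competitors. The ``main obstacle'' you flag --- passing from the uniform bound to a subsequential limit $F\in L^p(\S;\R^n)$ in the variable-domain topology --- is exactly the content of Lemma~\ref{lm:wk w1p conv}, which the paper simply invokes at this point, so no new Poincar\'e-type scaling analysis or fiber-averaging construction is required beyond what is already established.
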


\section{Preliminaries}

\subsection{Geometric setting}

\subsubsection{Decomposition of $T\calM|_\S$}

As stated in the previous section, we view $\W_h$ as a restriction of $\NS$ via the exponential map, where $\NS$ is the normal bundle of $\S$ in $\calM$; we denote by $\pi$ the projection from $\NS$ or $\W_h$ into $\S$.
 
We define the projection operators
\[
\Ppar : T\calM|_\S \to T\S 
\Textand
\Pperp : T\calM|_\S \to \NS,
\]
and the corresponding inclusions
\[
\ipar : T\S \hookrightarrow T\calM|_\S
\Textand
\iperp : \NS \hookrightarrow T\calM|_\S.
\]

\subsubsection{Pullback bundles}

Let $E\to\S$ and $F\to\NS$ (or $\W_h$) be vector bundles. 
The pullback $\pi^*E$ is a vector bundle over $\NS$, such that for $\xi\in\NS$, the fiber $(\pi^*E)_\xi$ is identified with the fiber $E_{\pi(\xi)}$. Let $\Phi:\pi^*E\to F$, i.e.,
\[
\Phi_\xi : (\pi^*E)_\xi \to F_\xi.
\]
Since $(\pi^*E)_\xi$ is canonically identified with $E_{\pi(\xi)}$, we can unambiguously apply $\Phi_\xi$ to elements of $E_{\pi(\xi)}$.

\subsubsection{Connections and parallel transport}

Let $\nabla$ denote the Levi-Civita connection on $T\calM$, and by abuse of notation, also on its restriction to $T\calM|_\S$. The induced connection on $\NS$ is defined by
\[
\nabla^\perp = \Pperp\circ\nabla\circ\iperp.
\]

Let $\xi\in\W_h$, and denote by $\Pi_\xi$ the parallel transport with respect to $\nabla$ from $T_{\pi(\xi)}\calM$ to $T_\xi\calM$ along the geodesic from $\pi(\xi)$ to $\xi$. That is, $\Pi$ is a bundle map
\[
\Pi : \pi^*T\calM|_\S \to T\W_h,
\]
that satisfies
\[
\go_\xi(\Pi_\xi u, \Pi_\xi v) = \go_{\pi(\xi)}(u,v),
\]
for every $\xi\in\W_h$ and $u,v\in T_{\pi(\xi)}\calM$.

\subsubsection{Homogeneity}

With  parallel transport defined, we can now define ``homogeneity over fibers"  of the energy density $W$, 

\begin{definition}
$W$ is \textbf{homogenous over fibers} if for every $q\in T^*_\xi\W_h\otimes \R^n$,
\[
W_{\xi}(q) = W_{\pi(\xi)}(q\circ \Pi_\xi \circ \Pi_{\pi(\xi)}^{-1}),
\]
($\Pi_\xi \circ \Pi_{\pi(\xi)}^{-1}$ is the parallel transport $T_{\pi(\xi)}\W_h \to T_\xi\W_h$ along the geodesic connecting $\pi(\xi)$ to $\xi$).
Equivalently,
\[
W = \pi^*\left. W\right|_\S \circ \Pi^*.
\]
\end{definition}

In the classical (Euclidean) context, homogeneity of the energy density means that its dependence on the infinitesimal deformation does not depend on position. In a Riemannian setting, such a statement is problematic since there is no canonical identification of the tangent spaces at different points. A natural generalization of homogeneity is invariance under parallel transport. Note however that parallel transport is dependent on the trajectory between the end points, and therefore homogeneity requires an invariance that is independent on the trajectory. In a coordinate system, homogeneity means that the spatial dependence of $W$ is only through the entries $\go_{ij}$ of the Riemannian metric. The prototypical energy density \eqref{eq:Wproto} is an example of such density.

Homogeneity over fibers is a weaker property, which can be defined for tubular neighborhoods of a submanifold. It implies invariance under parallel transport along normal geodesics, while allowing inhomogeneity in the ``spatial" directions.  In  the particular case of a Euclidean metric, homogeneity over fibers means that the energy density does not depend explicitly on the normal coordinate. As such, it is not an intrinsic material property, however it is a sufficient condition for our purposes. 

An immediate consequence of homogeneity over fibers is the following:

\begin{lemma}
\label{lm:ww0}
For every $q\in T^*\calM|_\S \otimes \R^n$,
\[
\left. W\right|_\S (q) \ge W_0(q\circ \ipar),
\]
or equivalently,
\[
\left. W\right|_\S \ge W_0\circ \ipar^*.
\]
Moreover, if $W$ is homogenous over fibers, then for every $q\in T^*_\xi\W_h\otimes \R^n$,
\[
W_\xi(q) \ge (W_0)_{\pi(\xi)}(q\circ \Pi_\xi \circ \Pi_{\pi(\xi)}^{-1} \circ \ipar),
\]
or equivalently,
\[
W \ge \pi^*(W_0 \circ \ipar^*) \circ \Pi^*.
\]
\end{lemma}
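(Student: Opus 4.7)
The plan is to prove the first inequality directly from the definition of $W_0$, then derive the second by combining homogeneity over fibers with the first. For the first inequality, fix $p\in\S$ and $q\in (T^*\calM|_\S\otimes\R^n)_p$. The orthogonal decomposition $T_p\calM = T_p\S \oplus \NS_p$ dualizes to $T^*_p\calM \cong T^*_p\S \oplus \NS^*_p$, so $q$ splits as $q = q^\parallel \oplus q^\perp$ with $q^\parallel \in T^*_p\S\otimes\R^n$ and $q^\perp \in \NS^*_p\otimes\R^n$. Unpacking the dual of $\ipar:T\S\hookrightarrow T\calM|_\S$ shows that $q\circ\ipar = q^\parallel$. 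Since $W_0(q^\parallel) = \min_{r\in(\NS^*\otimes\R^n)_p} W|_\S(q^\parallel\oplus r)$, taking the particular choice $r = q^\perp$ yields
\[
W_0(q\circ\ipar) = W_0(q^\parallel) \le W|_\S(q^\parallel\oplus q^\perp) = W|_\S(q),
\]
which is the first claim; taking this pointwise gives the bundle-level version $W|_\S \ge W_0\circ\ipar^*$.

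For the second inequality, let $\xi\in\W_h$ and $q\in T^*_\xi\W_h\otimes\R^n$. Homogeneity over fibers gives
\[
W_\xi(q) = W_{\pi(\xi)}\bigl(q\circ\Pi_\xi\circ\Pi_{\pi(\xi)}^{-1}\bigr),
\]
and the right-hand side is the value of $W|_\S$ at the point $\pi(\xi)\in\S$ applied to the element $\tilde q := q\circ\Pi_\xi\circ\Pi_{\pi(\xi)}^{-1}\in T^*_{\pi(\xi)}\calM\otimes\R^n$. Applying the first inequality to $\tilde q$ yields $W_{\pi(\xi)}(\tilde q) \ge (W_0)_{\pi(\xi)}(\tilde q\circ\ipar)$, which is precisely the stated bound, and its bundle reformulation $W \ge \pi^*(W_0\circ\ipar^*)\circ\Pi^*$.

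The argument involves no real obstacle: its entire content is the decomposition of $T^*\calM|_\S$ and an application of the definition of $W_0$. The only point that requires a sanity check is that the minimum defining $W_0$ is actually attained (so that the inequality is meaningful and not just a $\liminf$), which is where the coercivity hypothesis on $W$ enters; together with continuity of $W|_\S$ in the fiber variable, it ensures the infimum over $r\in(\NS^*\otimes\R^n)_p$ is realized, as the paper notes and as will be verified in Lemma~\ref{lm:w0}.
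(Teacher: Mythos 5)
Your proof is correct and follows essentially the same route as the paper's: the first inequality is just the definition of $W_0$ as a minimum (your choice $r=q^\perp$ makes explicit what the paper leaves implicit), and the second follows immediately by combining homogeneity over fibers with the first inequality applied at $\pi(\xi)$. The extra detail on the decomposition $q = q^\parallel\oplus q^\perp$ and the attainment of the minimum is a fine, if not strictly necessary, elaboration.
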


\begin{proof}
From the definition of $W_0$, for every $q\in T^*\calM|_\S \otimes \R^n$,
\[
\left. W\right|_\S (q) \ge W_0(q\circ \ipar) = (W_0\circ \ipar^*)(q).
\]
The second part of the lemma is immediate from this inequality and the definition of homogeneity over fibers.
\end{proof}

\subsubsection{Approximating $\Pi$ and $\go$}

We now construct another bundle isomorphism
\[
\sigma \oplus \iota : \pi^*T\S \oplus \pi^*\NS \cong \pi^*T\calM|_\S \to T\W_h
\]
(see Figure~\ref{fig:1})
that satisfies $d\pi\circ\iota = 0$, $d\pi\circ\sigma= \id$ and $\Pi-\sigma \oplus \iota = O(h)$; the relation between $\sigma\oplus\iota$ and $\pi$ makes it simpler to analyze than $\Pi$, and our assumptions on $W$ will imply that $W$ is "almost" homogenous over fibers with respect to the parallel-transport-like map $\sigma \oplus \iota$. We will use this bundle isomorphism to define another metric, $\tgo$, on $\W_h$, such that $\sigma \oplus \iota$ is its parallel transport.
The metric $\tgo$ approximates $\go$ in a sense that will be made precise. We will then repeatedly switch between the two metrics, thus exploiting the simpler structure of $\tgo$. 

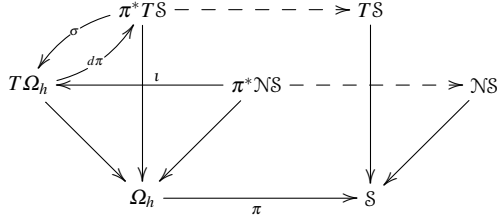
\begin{figure}
\begin{center}
\begin{xy}
(30,5)*+{\W_h} = "Wh";%
(60,5)*+{\S} = "S";%
(45,20)*+{\pi^*\NS} = "piNS";%
(75,20)*+{\NS} = "NS";%
(30,30)*+{\pi^*T\S} = "piTS";%
(60,30)*+{T\S} = "TS";%
(15,20)*+{T\W_h} = "TWh";%
(32,21)*+{_\iota};%
{\ar@{->}_{\pi} "Wh"; "S"};%
{\ar@{->}_{} "piNS"; "Wh"};%
{\ar@{->}_{} "piTS"; "Wh"};%
{\ar@{->}_{} "TWh"; "Wh"};%
{\ar@{->}_{} "NS"; "S"};%
{\ar@{->}_{} "TS"; "S"};%
{\ar@{-->}_{} "piNS"; "NS"};%
{\ar@{-->}_{} "piTS"; "TS"};%
{\ar@{->}_{} "piNS"; "TWh"};%
{\POS"piTS" \ar@/_2ex/ "TWh"|{_\sigma}}
{\POS"TWh" \ar@/_2ex/ "piTS"|{^{d\pi}}}
\end{xy}
\end{center}
\caption{Commutative diagram for $\sigma$, $\iota$ and $d\pi$.}
\label{fig:1}
\end{figure}

Let $\iota : \pi^*\NS \hookrightarrow T\NS$ denote the canonical identification of the vector bundle $\NS$ with its own vertical tangent space. Explicitly, for $\xi\in\NS$ and $\eta\in(\pi^*\NS)_{\xi}$, there is a canonical identification of $\eta$ with an element of $(\NS)_{\pi(\xi)}$. We then define a curve $\gamma:I\to\NS$,
\[
\gamma(t) = \xi + \eta \, t,
\]
and identify $\iota_\xi(\eta) = \dot{\gamma}(0)$.
Clearly $d\pi : T\NS\to \pi^*T\S$ and $\iota$ satisfy: 
\[
d\pi\circ\iota=0.
\]
To fully determine an isomorphism $T\NS \cong \pi^*T\S \oplus \pi^*\NS$ we need a map
\[
\sigma : \pi^*T\S \to T\NS,
\]
such that
\[
d\pi\circ\sigma = \id.
\]
To this end we use the induced connection on $\NS$. Define $\sigma$ to be the unique map such that for any curve $\alpha : I \to S$, and any parallel normal field $\gamma\to\NS$ along $\alpha$, we have
\[
\sigma_{\gamma(0)}(\dot \alpha) = \dot \gamma.
\]
In other words, given $\dot\alpha\in T\S$ and $\xi \in \NS_{\pi(\dot\alpha)}$, $\sigma_\xi(\dot \alpha)$ is the equivalence class of a curve $\gamma : I \to \NS$ along $\alpha$, that satisfies
\[
\gamma(0) = \xi 
\Textand
\nabla^\perp_{\dot\alpha} \dot{\gamma} = 0.
\]
Since $\gamma$ is a curve along $\alpha$, that is, $\pi\circ\gamma = \alpha$, we have, by differentiation, that indeed $d\pi\circ\sigma = \id$.

Note that we defined the range of $\sigma \oplus \iota : \pi^*T\calM|_\S \to T\NS$ to be the total bundle $T\NS$, which means that $\pi$ is viewed as a projection $\pi:\NS\to S$. Restricting $\pi$ to $\W_h$,  we may view $\sigma \oplus \iota$ as a mapping 
$\pi^*T\calM|_\S \to T\W_h$, similar to $\Pi$.

The following lemmas are concerned with the deviation of $\sigma\oplus\iota$ from $\Pi$ and its consequences:

\begin{lemma}
\label{lm:siPi}
The restrictions of $\sigma \oplus \iota$ and $\Pi$ to bundle maps over $\W_h$ (i.e. when $\pi$ is viewed as a mapping $\W_h\to\S$) satisfy
\[
\sigma \oplus \iota - \Pi = O(h).
\]
That is, there exists $C>0$, independent of $h$, such that for every $v\in T\calM|_\S$ and $\xi\in (\W_h)_{\pi(v)}$,
\[
|(\sigma \oplus \iota - \Pi)_\xi(v)| \le Ch |v|
\]
\end{lemma}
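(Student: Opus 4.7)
The plan is to use smoothness of the two bundle maps combined with their agreement on the zero section $\S \subset \NS$ and the compactness of $\S$. Under the exponential identification of $\W_h$ with the open subset $\{\xi \in \NS : |\xi| < h\}$, both $\Pi_\xi$ and $(\sigma\oplus\iota)_\xi$ are linear maps $T_{\pi(\xi)}\calM \to T_\xi\calM$ depending smoothly on $\xi$: smoothness of $\Pi$ follows from smooth dependence of the parallel transport ODE on initial data along the family of normal geodesics, and smoothness of $\sigma \oplus \iota$ is immediate from its construction, since $\iota$ is the canonical vertical lift and $\sigma$ arises from the smooth Ehresmann connection on $\NS$ induced by $\nabla^\perp$.

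Next, I would verify that the two maps coincide at every $\xi \in \S$, i.e.\ when $\xi = 0$ in the fiber. Since the geodesic from $\pi(\xi) = \xi$ to $\xi$ is degenerate, $\Pi_0 = \id$ on $T_{\pi(\xi)}\calM$. For $(\sigma \oplus \iota)_0$: the map $\iota_0$ identifies $\NS_{\pi(\xi)}$ with the vertical tangent space at the zero vector, and under $d\exp_0 = \id$ this coincides with $\iperp$; for $v \in T_{\pi(\xi)}\S$, the parallel section along any curve $\alpha \subset \S$ tangent to $v$ starting from $0 \in \NS_{\pi(\xi)}$ is identically zero, so the velocity of $\gamma$ at $0$ coincides with $\dot\alpha(0) = v$ under the same identification, yielding $\sigma_0 = \ipar$. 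Thus $(\sigma \oplus \iota)_0 = \ipar \oplus \iperp = \id = \Pi_0$.

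Given vanishing of the difference on $\S$, the estimate reduces to a standard Taylor/compactness argument. The map $D \defi \Pi - (\sigma \oplus \iota)$ is a smooth section of $\mathrm{Hom}(\pi^*T\calM|_\S, T\W_{h_0})$ vanishing on the compact submanifold $\S$; hence, by covering $\S$ by finitely many trivializing charts and applying Taylor's theorem fiberwise, there exists a constant $C > 0$, depending only on $(\calM, \S, \go, h_0)$, such that the operator norm satisfies $|D_\xi| \le C|\xi|$ for every $\xi \in \W_{h_0}$. Restricting to $\xi \in \W_h$, where $|\xi| < h$, gives $|D_\xi(v)| \le Ch|v|$ as required. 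The main care point, more than a genuine obstacle, is checking the $\xi = 0$ identifications rigorously; a more computational alternative would be to work in Fermi coordinates $(y^\alpha, x^i)$ adapted to $\S$, linearize the parallel transport ODE $\dot V^k = -\Chr{k}{i}{j}\, x^i V^j$ in $t$ along the normal geodesic $t \mapsto (y, tx)$, and compare the leading-order term with the explicit coordinate expression for $\sigma \oplus \iota$, but this yields no further information beyond the $O(h)$ rate claimed.
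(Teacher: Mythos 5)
Your proof is correct. The paper gives no argument of its own here --- it simply cites Lemma~3.1 of [KS14] --- and your route (smoothness of both bundle maps $\Pi$ and $\sigma\oplus\iota$ over the tubular neighborhood, verification that both restrict to the identity $\ipar\oplus\iperp$ on the zero section, and a fiberwise Taylor estimate over the compact base $\S$) is exactly the standard argument underlying that reference. The one point genuinely requiring care, namely that $\sigma_{0_p}=\ipar$ because the $\nabla^\perp$-parallel lift of the zero normal vector is the zero section itself, is handled correctly in your write-up.
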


\begin{proof}
See Lemma 3.1 in \cite{KS14}.
\end{proof}

An important corollary of Lemma \ref{lm:siPi} is that $W$ is almost homogeneous over fibers with respect to $\sigma\oplus\iota$:
\begin{corollary}
\label{cy:w sigma iota}
There exists $C>0$ such that for every $q\in T^*\W_h\otimes\R^n$,
\[
|W(q) - \pi^*W|_\S \circ (\sigma\oplus\iota)^*(q)| \le Ch(1+|q|^p).
\]
\end{corollary}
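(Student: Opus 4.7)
The plan is to combine the homogeneity hypothesis on $W$ with its Lipschitz continuity, using Lemma \ref{lm:siPi} to control the discrepancy between $\sigma\oplus\iota$ and $\Pi$. First, I would rewrite $W(q)$ using the identity $W = \pi^*W|_\S \circ \Pi^*$ afforded by homogeneity over fibers, so that the quantity to be estimated becomes
\[
\left| \pi^*W|_\S(\Pi^*(q)) - \pi^*W|_\S((\sigma\oplus\iota)^*(q)) \right|.
\]
This reduces the claim to a comparison of $W|_\S$ evaluated at two nearby cotensors in $T^*_{\pi(\xi)}\calM\otimes\R^n$, where $\xi$ is the footpoint of $q$.

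Next, I would apply the Lipschitz property of $W$ fiberwise at $\pi(\xi)$ to bound the above by
\[
C\left(1 + |\Pi^*(q)|^{p-1} + |(\sigma\oplus\iota)^*(q)|^{p-1}\right) \cdot \left| \Pi^*(q) - (\sigma\oplus\iota)^*(q) \right|.
\]
Since $\Pi_\xi$ is a fiberwise linear isometry, composition with it preserves cotensor norms, so $|\Pi^*(q)|=|q|$. Lemma \ref{lm:siPi} bounds the operator norm of $\sigma\oplus\iota-\Pi$ by $Ch$, and since taking adjoints (and tensoring with $\id_{\R^n}$) does not increase operator norms, we get $|\Pi^*(q)-(\sigma\oplus\iota)^*(q)|\le Ch|q|$ and $|(\sigma\oplus\iota)^*(q)|\le (1+Ch_0)|q|$.

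Assembling these estimates yields a bound of the form $C'h|q|(1+|q|^{p-1})$. To reach the stated form $Ch(1+|q|^p)$, I would invoke the elementary inequality $h|q|^k \le h(1+|q|^p)$, valid for every $0\le k\le p$ by splitting into the cases $|q|\le 1$ and $|q|>1$. I do not anticipate a substantive obstacle: the homogeneity-over-fibers hypothesis was formulated precisely so that the parallel transport enters $W$ only through $\Pi^*$, which is exactly the operator that Lemma \ref{lm:siPi} compares with $(\sigma\oplus\iota)^*$. The only mild care needed is bookkeeping — making sure that $\pi^*W|_\S$ is interpreted consistently on $\pi^*T^*\calM|_\S\otimes\R^n$ when pre-composed with either adjoint, and that the constant absorbs the factor $(1+Ch_0)^{p-1}$ uniformly in $h\in(0,h_0]$.
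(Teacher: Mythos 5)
Your proposal is correct and follows essentially the same route as the paper's proof: rewrite $W(q)$ as $\pi^*W|_\S\circ\Pi^*(q)$ via homogeneity over fibers, apply the fiberwise Lipschitz property of $W$, bound $|q\circ(\Pi-\sigma\oplus\iota)|\le Ch|q|$ using Lemma \ref{lm:siPi}, and absorb $|q|(1+|q|^{p-1})$ into $C(1+|q|^p)$. The extra bookkeeping you mention (isometry of $\Pi$, uniformity of constants in $h\in(0,h_0]$) is consistent with what the paper leaves implicit.
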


\begin{proof}
By the homogeneity over fibers and the Lipschitz property of $W$:
\[
\begin{split}
|W(q) - \pi^*W|_\S \circ (\sigma\oplus\iota)^*(q)| & = | \pi^*W|_\S \circ \Pi^* (q) - \pi^*W|_\S \circ (\sigma\oplus\iota)^*(q)| \\
	& = | \pi^*W|_\S (q\circ \Pi) - \pi^*W|_\S \circ (q\circ (\sigma\oplus\iota))| \\
	& \le C(1 + |q\circ \Pi|^{p-1} + |q\circ (\sigma\oplus\iota)|^{p-1} )| q\circ (\Pi - \sigma\oplus\iota)| \\
	& \le C(1+|q|^{p-1})|q|\cdot h \le Ch(1+ |q|^p).
\end{split}
\]
\end{proof}

Let $\tgo$ denote the unique metric on $\W_h$ such that $\tilde\go|_\S = \go|_\S$ and $\sigma \oplus \iota$ is an isometry. The following corollary follows immediately from Lemma \ref{lm:siPi} (see \cite{KS14} for details).
\begin{corollary}\label{cy:tgg}
\begin{enumerate}
\item
\[
\tilde \go - \go = O(h),
\]
that is,
$|\tgo(u,v) - \go(u,v)| \le Ch |u| |v|$ for every $u,v\in T\W_h$.
\item
\[
\Volumet - \Volume = O(h).
\]
\item
For small enough $h$, the $L^p$ (resp. $W^{1,p}$) norm on $(\W_{h},\go)$ is equivalent to the $L^p$  (resp. $W^{1,p}$) norm on $(\W_{h},\tgo)$. 
\end{enumerate}
\end{corollary}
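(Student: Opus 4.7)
The plan is to propagate the uniform $O(h)$ estimate of Lemma \ref{lm:siPi} through to the metric, then to the volume form, and finally to the function space norms.

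For part (1), observe that $\Pi$ is a $\go$-isometry (parallel transport preserves the Levi-Civita metric) while, by construction, $\Phi \defi \sigma\oplus\iota$ is a $\tgo$-isometry. Hence, writing both metrics via pullback to the reference bundle $\pi^*T\calM|_\S$ equipped with fiber metric $\pi^*\go|_\S$, one has for $u,v \in T_\xi \W_h$,
\[
\go_\xi(u,v) = \go|_\S(\Pi^{-1}_\xi u,\Pi^{-1}_\xi v) \textand \tgo_\xi(u,v) = \go|_\S(\Phi^{-1}_\xi u,\Phi^{-1}_\xi v).
\]
Using the operator identity $\Phi^{-1} - \Pi^{-1} = \Pi^{-1}(\Pi-\Phi)\Phi^{-1}$ together with Lemma \ref{lm:siPi} and the uniform boundedness of $\Pi^{-1}$ (isometric) and of $\Phi^{-1}$ (a small perturbation of $\Pi^{-1}$ for small $h$), one obtains $\Phi^{-1}_\xi - \Pi^{-1}_\xi = O(h)$. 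Bilinearity of $\go|_\S$ then yields $|\tgo_\xi(u,v)-\go_\xi(u,v)| \le C h\,|u|_\go |v|_\go$, which is part (1).

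Part (2) is an immediate determinant consequence of part (1): in any fixed coordinate chart, part (1) gives $\tilde g_{ij}(x) = g_{ij}(x) + O(h)$ entrywise, hence $\det(\tilde g_{ij}) = \det(g_{ij})(1+O(h))$ and, taking square roots, $\Volumet = (1+O(h))\Volume$. Part (3) then follows from parts (1) and (2): for $h$ sufficiently small, part (1) makes $\tgo$ and $\go$ uniformly equivalent as quadratic forms on $T\W_h$; by duality the same holds for the induced cotangent metrics, and hence for the pointwise norms of tensor fields such as $df$ for $f\in W^{1,p}(\W_h;\R^n)$. Combined with the uniform comparability of volume forms given by part (2), this yields equivalence (with constants depending only on $h_0$) of both the $L^p$ and $W^{1,p}$ norms on $\W_h$.

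The main technical point to watch is \emph{uniformity}: all implicit constants in the $O(h)$ bounds above must be independent of $\xi\in\W_{h_0}$, so that the pointwise comparisons integrate to global norm equivalences rather than merely fiberwise ones. Lemma \ref{lm:siPi} already delivers such a uniform constant, and the compactness of $\S$ ensures that the coordinate-chart computations of part (2) can be organized on a finite atlas with uniform bounds on the metric coefficients; thus no additional work beyond the three routine steps above is needed.
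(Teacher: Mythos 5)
Your argument is correct and is exactly the intended derivation: the paper itself gives no details here (it says the corollary "follows immediately from Lemma~\ref{lm:siPi}" and defers to [KS14]), and the route you take---both $\Pi$ and $\sigma\oplus\iota$ are isometries for $\go$ and $\tgo$ respectively, so the uniform $O(h)$ bound on their difference transfers to the metrics, then to determinants/volume forms, then to the $L^p$ and $W^{1,p}$ norms---is precisely the one being invoked. The only cosmetic point is that the reference fiber metric on $\pi^*T\calM|_\S$ is $\go$ restricted to points of $\S$ (on all of $T\calM|_\S$, not merely on $T\S$), but your computation clearly uses it in that sense, and your attention to uniformity of the constants is exactly the right thing to emphasize.
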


We now state some further properties of the metric $\tgo$. We show that $\Volumet$ decomposes into a product $\eta\wedge\omega$ where $\eta$ is related to  {the volume form on $\S$ and $\omega$ is a $k$-form}. 
This decomposition will allow us to use repeatedly Fubini's theorem. Moreover, it will be shown to satisfy nice properties upon the rescaling {of} the tubular neighborhoods $\W_h$.
The definitions are given below; for full details and proofs see \cite{KS14}.

Let $E,F \rightarrow M$ be vector bundles and let $\chi: E \rightarrow F$ be a morphism of vector bundles. Denote by $\Lambda^a \chi : \Lambda^a E \rightarrow \Lambda^a F$ the associated vector bundle morphism between the $a^{th}$ exterior powers of $E$ and $F$.
Write
\begin{align*}
\rho  &= (\sigma \oplus \iota)^{-1*} \circ (\pi^*\Ppar)^* : \pi^* T^*\S \rightarrow T^*\W_h \\
\theta  &= (\sigma \oplus \iota)^{-1*} \circ (\pi^*\Pperp)^* : \pi^* \NS^* \rightarrow T^* \W_h.
\end{align*}
Note that
\begin{equation}\label{eq:tris}
\sigma^* \circ \rho = \id, \qquad \iota^* \circ \theta = \id, \qquad \sigma^* \circ \theta = 0, \qquad \iota^* \circ \rho = 0.
\end{equation}
Moreover, equations~\eqref{eq:tris} uniquely characterize $\rho$ and $\theta$.
Taking exterior powers, we have
\[
\Lambda^i \rho : \Lambda^i \pi^* T^*\S \rightarrow \Lambda^i T^*\W_h, \qquad \Lambda^j \theta : \Lambda^j \pi^* \NS^* \rightarrow \Lambda^j T^* \W_h,
\]
{and equations~\eqref{eq:tris} then imply that}
\[
\bigoplus_{i+j = l} \Lambda^i \rho \wedge \Lambda^j\theta  = \Lambda^l (\sigma \oplus \iota)^{-1*}.
\]


Let $\tilde \eta$ be the unit norm section of $\Lambda^{n-k} T^*\S$ belonging to the orientation class, i.e. $\tilde \eta = \VolumeS$. Let $\tilde \omega$ be the unit norm section of $\Lambda^k \NS^*$ belonging to the orientation class determined by the orientations of $\calM$ and $\S$. Define
\[
\eta = \Lambda^{n-k} \rho \circ \pi^*\tilde \eta, \qquad \omega = \Lambda^k \theta \circ \pi^*\tilde \omega.
\]
In particular, $\eta \in A^{n-k}(\W_h)$ and $\omega \in A^k(\W_h)$. It follows from the definition that
\beq\label{eq:vtot}
  \eta \wedge\omega= \Volumet.
\eeq

\begin{lemma}[properties of $\eta$ and $\omega$]
\label{lm:propeo}
Denote by $\pi^\star$ and $\pi_\star$ the pullback and push-forward of forms on $\S$ and on $\W_h$. Then 
\begin{equation*}
\eta  = \pi^\star \VolumeS
\Textand
\pi_\star (\omega) = v_k h^k,
\end{equation*}
where $v_k$ is the volume of the $k$-dimensional unit ball.
\end{lemma}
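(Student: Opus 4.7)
The plan is to prove the two assertions separately, both by unravelling the definitions of $\rho$, $\theta$, $\sigma$, $\iota$ and using the characterizations
\[
d\pi\circ\sigma = \id_{\pi^*T\S},\qquad d\pi\circ\iota = 0.
\]

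For the first identity $\eta = \pi^\star\VolumeS$, I would begin by showing that $\rho$ agrees with the usual form-pullback along $\pi$. Unfolding definitions, for $\xi\in\W_h$, $\alpha\in(\pi^*T^*\S)_\xi$ and $v\in T_\xi\W_h$,
\[
\rho_\xi(\alpha)(v) = (\pi^*\Ppar)^*(\alpha)\bigl((\sigma\oplus\iota)^{-1}(v)\bigr) = \alpha\bigl(\pi^*\Ppar\circ(\sigma\oplus\iota)^{-1}(v)\bigr).
\]
The relations $d\pi\circ\sigma = \id$ and $d\pi\circ\iota = 0$ together give $d\pi\circ(\sigma\oplus\iota) = \pi^*\Ppar$ as maps $\pi^*T\S\oplus\pi^*\NS \to \pi^*T\S$, hence $\pi^*\Ppar\circ(\sigma\oplus\iota)^{-1} = d\pi$. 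Substituting back, $\rho_\xi(\alpha)(v) = \alpha(d\pi(v))$, so $\rho$ is exactly the form-pullback $\pi^\star$ applied to sections of $\pi^*T^*\S$. Taking the $(n-k)$-th exterior power and applying to $\pi^*\tilde\eta = \pi^*\VolumeS$ yields $\eta = \pi^\star\VolumeS$.

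For the second identity, I would analyze the restriction of $\omega$ to a single fiber $\pi^{-1}(p)$. By an identical computation to the one above (with $\Pperp$ in place of $\Ppar$), one obtains
\[
\theta_\xi(\beta)(v) = \beta\bigl(\pi^*\Pperp\circ(\sigma\oplus\iota)^{-1}(v)\bigr),
\]
i.e.\ $\theta_\xi(\beta)(v) = \beta(v^\perp)$ where $v^\perp$ is the $\pi^*\NS$-component of $(\sigma\oplus\iota)^{-1}(v)$. When $\xi\in\pi^{-1}(p)$ and $v\in T_\xi(\pi^{-1}(p))$, we have $d\pi(v)=0$, so $v$ has no $\sigma$-component and hence $v = \iota_\xi(v^\perp)$; consequently $v^\perp = \iota_\xi^{-1}(v)$ and $\theta|_{\pi^{-1}(p)} = (\iota^{-1})^*$. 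Therefore $\omega|_{\pi^{-1}(p)} = (\iota^{-1})^*\tilde\omega_p$, which is precisely the transport of the Euclidean unit volume form on $\NS_p$ to the fiber via the canonical identification of $\NS_p$ with its own tangent space.

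Finally, the push-forward $\pi_\star\omega$ is the function on $\S$ whose value at $p$ is the fiber integral $\int_{\pi^{-1}(p)}\omega|_{\pi^{-1}(p)}$. Under the exponential map, the fiber $\pi^{-1}(p)\cap\W_h$ is identified isometrically with the open ball of radius $h$ in the Euclidean $k$-space $(\NS_p,\go_p|_{\NS_p})$, and $(\iota^{-1})^*\tilde\omega_p$ becomes the standard volume form on this ball. The integral is therefore $v_k h^k$, independent of $p$. The only real subtlety to be careful about is the bookkeeping identifying $T_\xi(\pi^{-1}(p))$ with $\NS_p$ via $\iota$ in a way consistent with the canonical identification used to define $\iota$; once that is set up, the volume computation is routine, and the fact that the fiber is open but bounded does not affect the top-degree integral.
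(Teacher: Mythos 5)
Your argument is correct. The paper itself does not prove this lemma --- it defers to \cite{KS14} --- so there is no in-text proof to compare against; your direct unwinding of the definitions is a legitimate self-contained verification. The two key identities you isolate, $\pi^*\Ppar\circ(\sigma\oplus\iota)^{-1}=d\pi$ (from $d\pi\circ\sigma=\id$, $d\pi\circ\iota=0$) and $\theta|_{\pi^{-1}(p)}=(\iota^{-1})^*$ on fiber-tangent vectors, are exactly what is needed, and the fiber integral of the resulting translation-invariant form over the ball of radius $h$ in $(\NS_p,\go_p|_{\NS_p})$ is indeed $v_kh^k$. One small wording caveat: the phrase ``identified isometrically with the open ball'' is not quite the right justification --- the exponential map need not carry the induced submanifold metric of the fiber to the Euclidean metric of $\NS_p$, and in any case you are not integrating the induced Riemannian volume of the fiber. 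What actually matters, and what your computation establishes, is that $\omega|_{\pi^{-1}(p)}=(\iota^{-1})^*\tilde\omega_p$ is the constant-coefficient form on the vector space $\NS_p$ normalized to $1$ on an oriented orthonormal basis, so its integral over the radius-$h$ ball is $v_kh^k$ regardless of any metric on the fiber as a submanifold. With that phrasing tightened, the proof is complete.
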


\begin{lemma}
\label{lm:cm}
We have
\beq
\frac{\piPush \Volume}{|\W_h|} - \frac{\VolumeS}{|\S|} = O(h), \qquad |\S|\nu_k h^k - |\W_h| = O(h^{1+k}).
\label{eq:lim_measure}
\eeq
\end{lemma}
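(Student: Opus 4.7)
The plan is to use the approximating metric $\tgo$ as a bridge: for $\tgo$ the volume form factors through the fibration $\pi$ \emph{exactly}, while \corrref{cy:tgg} lets us pass back to $\go$ at an $O(h)$ cost.

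First, combining \eqref{eq:vtot} with \lemref{lm:propeo} gives $\Volumet = \pi^\star\VolumeS \wedge \omega$, so by the projection formula and $\pi_\star\omega = v_k h^k$,
\[
\pi_\star \Volumet \;=\; \VolumeS \wedge \pi_\star\omega \;=\; v_k h^k\,\VolumeS,
\]
and integrating over $\S$ yields the exact identity $|\W_h|_{\tgo} = v_k h^k|\S|$.

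Next, by \corrref{cy:tgg}(1)--(2) the metrics $\go$ and $\tgo$ differ by $O(h)$ on the compact set $\overline{\W_{h_0}}$; the corresponding Gram determinants therefore differ by a multiplicative factor $1+O(h)$, so we may write $\Volume = (1+\rho_h)\Volumet$ with $\|\rho_h\|_{L^\infty(\W_h)}\le Ch$. Since pushforward along $\pi$ is positive fiberwise integration, this pointwise bound pushes forward to
\[
\pi_\star\Volume \;=\; \pi_\star\Volumet \,+\, O(h)\,\pi_\star\Volumet \;=\; v_k h^k \VolumeS \,+\, O(h^{k+1})\,\VolumeS
\]
pointwise on $\S$. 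Integrating over $\S$ gives $|\W_h| = v_k h^k|\S| + O(h^{k+1})$, which is the second bound. For the first,
\[
\frac{\pi_\star\Volume}{|\W_h|} \;=\; \frac{v_k h^k(1+O(h))\,\VolumeS}{v_k h^k|\S|(1+O(h))} \;=\; \frac{\VolumeS}{|\S|}\bigl(1+O(h)\bigr),
\]
so $\pi_\star\Volume/|\W_h| - \VolumeS/|\S| = O(h)\,\VolumeS/|\S|$, which is $O(h)$.

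The main (and only real) obstacle is to ensure that the multiplicative $O(h)$-smallness of the density $\Volume/\Volumet$ passes through fiberwise integration with the correct scale: the $h^k$-size of the fibers turns the multiplicative $O(h)$ error into the additive $O(h^{k+1})$ in the second assertion, and this same factor $h^k$ cancels against the denominator to give the $O(h)$ of the first assertion. An additive-only bound on $\Volumet - \Volume$ would not suffice, but the multiplicative version follows from the uniform two-sided bounds on both Gram matrices on the compact set $\overline{\W_{h_0}}$.
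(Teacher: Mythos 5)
Your proof is correct, and it follows exactly the strategy the paper's machinery is built for: compute exactly in the metric $\tgo$ via $\Volumet=\eta\wedge\omega$, Lemma~\ref{lm:propeo} and the projection formula, then transfer to $\go$ using the multiplicative $1+O(h)$ comparison of the volume forms from Corollary~\ref{cy:tgg}. The paper itself omits the proof (deferring to \cite{KS14}), and your observation that the bound on $\Volumet-\Volume$ must be used in its multiplicative form to turn the fiberwise $h^k$ into the additive $O(h^{1+k})$ is precisely the right point of care.
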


\subsubsection{Rescaling of tubular neighborhoods}

There is a natural notion of strong convergence for functions defined over shrinking tubular neighborhoods;  see next subsection. The notion of  weak convergence turns out to be more subtle. To define it we will need to rescale the tubular neighborhoods (for more details and proofs, see \cite{KS14}).

Define the rescaling operator $\mu_h : \Omega_{h_0} \rightarrow \Omega_{h_0 h}$ by
\[
\mu_h(\xi) = h \xi.
\]
Clearly $\pi \circ \mu_h = \pi$. We assume that $h_0$ is small enough such that part 3 in Corollary \ref{cy:tgg} holds.

\begin{lemma}
\label{lm:resc}
\begin{enumerate}
\item
For every $\xi\in \W_{h_0}$,
\[
d\mu_h\circ \sigma_\xi = \sigma_{h\xi}, \qquad d\mu_h\circ \iota_\xi = h \,\iota_{h\xi}.
\]
\item
\[
\mu_h^\star \omega = h^k \omega, \qquad \mu_h^\star \eta = \eta.
\]
\item
Let $f \in L^1(\Omega_{h_0h})$. Then
\[
\dashint_{\Omega_{h_0h}} f\, \Volume = \frac{1+ O(h)}{\nu_k h_0^k |\S|} \int_{\Omega_{h_0}} (f \circ \mu_h)\, \eta\wedge\omega.
\]
\end{enumerate}
\end{lemma}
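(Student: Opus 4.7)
The proof has three parts, each with its own flavor, and I would tackle them in order since parts 2 and 3 build on part 1.

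\textbf{Part 1.} This is a direct computation from the definitions, exploiting that $\mu_h$ is fiberwise scalar multiplication by $h$ (with trivial action on the base, since $\pi \circ \mu_h = \pi$). For $\iota$, I would fix $\xi \in \W_{h_0}$ and $\eta \in (\pi^*\NS)_\xi$, and write $\iota_\xi(\eta) = \dot\gamma(0)$ for the curve $\gamma(t) = \xi + t\eta$. Then $\mu_h \circ \gamma(t) = h\xi + th\eta$, so $d\mu_h(\iota_\xi(\eta)) = \iota_{h\xi}(h\eta) = h\,\iota_{h\xi}(\eta)$ by linearity. For $\sigma$, if $\alpha: I\to\S$ is a curve with $\dot\alpha(0) = v$ and $\gamma$ is the parallel normal field along $\alpha$ with $\gamma(0) = \xi$, then $h\gamma$ is also parallel normal along $\alpha$ (the connection $\nabla^\perp$ is linear over $\bbR$) with $(h\gamma)(0) = h\xi$, so $\sigma_{h\xi}(v) = \frac{d}{dt}(h\gamma(t))|_{t=0} = d\mu_h(\sigma_\xi(v))$.

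\textbf{Part 2.} I would first lift Part 1 to show $\mu_h^*\rho = \rho$ and $\mu_h^*\theta = h\theta$ using the uniqueness of $\rho,\theta$ from the characterization \eqref{eq:tris}. Indeed, since $\pi\circ\mu_h = \pi$, the pullback $\mu_h^*(\rho_{h\xi}(\beta))$ is defined on $T_\xi\W_{h_0}$, and for $\beta \in T^*_{\pi(\xi)}\S$ one computes
\[
(\mu_h^*\rho_{h\xi}(\beta)) \circ \sigma_\xi = \rho_{h\xi}(\beta) \circ (d\mu_h \circ \sigma_\xi) = \rho_{h\xi}(\beta) \circ \sigma_{h\xi} = \beta,
\]
and similarly $(\mu_h^*\rho_{h\xi}(\beta)) \circ \iota_\xi = \rho_{h\xi}(\beta)\circ(h\,\iota_{h\xi}) = 0$, so by \eqref{eq:tris} applied at $\xi$, $\mu_h^*\rho_{h\xi}(\beta) = \rho_\xi(\beta)$. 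The same argument for $\theta$ picks up a factor of $h$ from the $\iota$-factor, giving $\mu_h^*\theta = h\theta$. Taking $(n{-}k)$-th and $k$-th exterior powers respectively, and noting $\mu_h^*\pi^*\tilde\eta = \pi^*\tilde\eta$ and $\mu_h^*\pi^*\tilde\omega = \pi^*\tilde\omega$ since $\pi\circ\mu_h = \pi$, one concludes $\mu_h^*\eta = \eta$ and $\mu_h^*\omega = h^k\omega$.

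\textbf{Part 3.} This is a change of variables. Using Corollary~\ref{cy:tgg} on the neighborhood $\W_{h_0 h}$ (whose thickness is $O(h)$), we have $\Volume = (1 + O(h))\,\Volumet$. Combined with \eqref{eq:vtot}, this gives
\[
\int_{\W_{h_0 h}} f\,\Volume = (1 + O(h))\int_{\W_{h_0 h}} f\,(\eta\wedge\omega).
\]
Since $\mu_h : \W_{h_0}\to\W_{h_0 h}$ is an orientation-preserving diffeomorphism, the change of variables formula together with Part 2 yields
\[
\int_{\W_{h_0 h}} f\,(\eta\wedge\omega) = \int_{\W_{h_0}} \mu_h^*\bigl(f\,(\eta\wedge\omega)\bigr) = h^k \int_{\W_{h_0}}(f\circ\mu_h)\,(\eta\wedge\omega).
\]
Finally, Lemma~\ref{lm:cm} applied with parameter $h_0 h$ gives $|\W_{h_0 h}| = \nu_k h_0^k h^k |\S|\,(1 + O(h))$; dividing by this normalization and absorbing the two $(1+O(h))$ factors into one gives the stated formula.

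\textbf{Expected obstacles.} Nothing here is deep; the only places to be careful are bookkeeping the base points (vectors at $\xi$ versus $h\xi$ via $d\mu_h$), and tracking whether a $(1+O(h))$ error comes from the metric comparison $\tgo$ vs $\go$ on $\W_{h_0 h}$ or from the volume asymptotics in Lemma~\ref{lm:cm}; both errors are $O(h)$ on the shrinking domain, which is exactly what the statement claims.
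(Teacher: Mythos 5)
Your proof is correct, and all three computations (the behaviour of $\sigma$ and $\iota$ under $d\mu_h$, the pullbacks of $\rho$ and $\theta$ via the uniqueness characterization \eqref{eq:tris}, and the change of variables combined with Corollary~\ref{cy:tgg} and Lemma~\ref{lm:cm}) are the standard ones; the paper itself omits the proof and defers to \cite{KS14}, where the argument is essentially the one you give. The only point worth noting is that in Part 3 the multiplicative factor $(1+O(h))$ coming from $\Volume=(1+O(h))\,\Volumet$ should, for sign-changing $f\in L^1$, be read as an additive error bounded by $Ch\int|f|$ --- but this interpretive caveat is already implicit in the statement of the lemma, so your argument is at the same level of precision as the claim.
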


\subsection{Convergence in tubular neighborhoods}

We start by defining strong convergence over shrinking domains:

\begin{definition}
\label{df:conv}
Let $f_h\in L^p(\W_{h};\R^n)$ and $F \in L^p(\S;\R^n)$. We say that $f_h\to F$ in the strong $L^p$ topology if
\[
\lim_{h\to0}\,\,\, \dashint_{\W_{h}} |f_h - F\circ\pi|^p\,\Volume = 0.
\]
In other words, defining
\[
\|f_h\|^p_{L^p(\W_{h};\R^n)} = \dashint_{\W_{h}} |f_h|^p\,\Volume,
\]
$f_h\to F$ if for every $\e>0$
\[
f_h \in B_\e(F\circ\pi) 
\]
for every small enough $h$.
\end{definition}

We now discuss the relations between both strong and weak convergence in tubular neighborhoods and ``standard" convergence of mappings rescaled to mappings over $\W_{h_0}$. The following lemma establishes the equivalence between both notions for strong convergence:

\begin{lemma}
\label{lm:str lp conv}
Let $f_h\in L^p(\W_{h_0h};\R^n)$ and $F \in L^p(\S;\R^n)$. Then,
$f_h\to F$ in $L^p$ if and only if $f_h\circ\mu_h \to F\circ\pi$ in the strong $L^p(\W_{h_0};\R^n)$ topology.
\end{lemma}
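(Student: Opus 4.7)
The plan is to apply the change-of-variables formula in Lemma~\ref{lm:resc}(3) to the integrand $|f_h - F\circ\pi|^p$, exploiting the crucial identity $\pi\circ\mu_h = \pi$, which makes the pullback of $F\circ\pi$ under $\mu_h$ equal to $F\circ\pi$ itself (so the target function does not get rescaled in any meaningful way).

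Concretely, I would first observe that since $f_h \in L^p(\W_{h_0 h};\R^n)$ and $F\circ\pi \in L^p(\W_{h_0 h};\R^n)$, the function $|f_h - F\circ\pi|^p$ lies in $L^1(\W_{h_0 h})$, so Lemma~\ref{lm:resc}(3) applies and yields
\[
\dashint_{\W_{h_0 h}} |f_h - F\circ\pi|^p \,\Volume
= \frac{1+O(h)}{\nu_k h_0^k |\S|} \int_{\W_{h_0}} \bigl|(f_h\circ\mu_h) - (F\circ\pi)\circ\mu_h\bigr|^p \,\eta\wedge\omega.
\]
Using $\pi\circ\mu_h = \pi$, the second term on the right is simply $F\circ\pi$, so the integrand becomes $|f_h\circ\mu_h - F\circ\pi|^p$. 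By \eqref{eq:vtot}, $\eta\wedge\omega = \Volumet$, and by Corollary~\ref{cy:tgg}(3), the $L^p(\W_{h_0};\R^n)$ norms with respect to $\go$ and $\tgo$ are equivalent with constants independent of $h$.

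Combining these observations gives a two-sided estimate
\[
c \int_{\W_{h_0}} |f_h\circ\mu_h - F\circ\pi|^p \,\Volume
\le \dashint_{\W_{h_0 h}} |f_h - F\circ\pi|^p \,\Volume
\le C \int_{\W_{h_0}} |f_h\circ\mu_h - F\circ\pi|^p \,\Volume
\]
for some constants $0 < c \le C$ independent of $h$ (for all $h$ sufficiently small, so that $1+O(h)$ stays bounded away from $0$ and $\infty$). The left-hand side tends to $0$ precisely when $f_h \to F$ in the sense of Definition~\ref{df:conv}, and the right-hand side tends to $0$ precisely when $f_h\circ\mu_h \to F\circ\pi$ strongly in $L^p(\W_{h_0};\R^n)$, proving the equivalence.

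There is no substantial obstacle here: once Lemma~\ref{lm:resc}(3) is in hand, the argument is a bookkeeping exercise, with the only mildly delicate point being the verification that the metrics $\go$ and $\tgo$ produce comparable $L^p$ norms on $\W_{h_0}$ (which is exactly Corollary~\ref{cy:tgg}(3)) and that the geometric constant $\nu_k h_0^k |\S|$ absorbs into the equivalence of norms.
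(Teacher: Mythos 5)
Your proof is correct and follows essentially the same route as the paper's: the same application of Lemma~\ref{lm:resc}(3), the same use of the identity $\pi\circ\mu_h=\pi$ to leave $F\circ\pi$ invariant under the rescaling, and the same appeal to Corollary~\ref{cy:tgg}(3) to pass between the $\go$- and $\tgo$-norms on $\W_{h_0}$. Nothing is missing.
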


\begin{proof}
It follows from Lemma \ref{lm:resc} and the relation $\pi \circ \mu_h = \pi$ that
\[
\begin{split}
\dashint_{\W_{h_0h}} |f_h - F\circ\pi|^p\,\Volume &= 
\frac{1+ O(h)}{\nu_k h_0^k |\S|} \int_{\Omega_{h_0}} (|f_h - F\circ\pi|^p \circ \mu_h)\, \omega \wedge \eta =\\
	&= \frac{1+ O(h)}{\nu_k h_0^k |\S|} \int_{\Omega_{h_0}} |f_h\circ \mu_h - F\circ\pi|^p\, \omega \wedge \eta.
\end{split}
\]
Hence $f_h \to F$ if and only if the function $f_h\circ\mu_h \to F\circ\pi$ in $L^p(\W_{h_0};\R^n)$ with respect to the metric $\tgo$. 
Since $L^p$ convergence in $(\W_{h_0},\go)$ is equivalent to $L^p$ convergence in $(\W_{h_0},\tgo)$ (Corollary \ref{cy:tgg}), the proof is complete.
\end{proof}


To address weak convergence we first need a technical lemma, {which basically states that if the normal derivative of a mapping is zero, than the mapping does not depend on the normal coordinate.}

\begin{lemma}
\label{lm:dfi=0}
Suppose that $f\in W^{1,p}(\W_h;\R^n)$ satisfies $df\circ \iota=0$. 
Then, there exists an $F\in W^{1,p}(\S;\R^n)$ such that $f= F\circ \pi$. 
\end{lemma}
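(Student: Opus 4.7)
The plan is to reduce the statement to a local product setting and then apply a standard Fubini-type argument saying that a $W^{1,p}$ function with vanishing weak derivatives in some coordinate directions is independent of those coordinates. The geometric machinery (the metric $\tgo$, the bundle map $\sigma\oplus\iota$, and the decomposition $\Volumet=\eta\wedge\omega$) is what makes this reduction clean.

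First I would cover the compact manifold $\S$ by finitely many open sets $\{U_\alpha\}$ on which both $T\S$ and $\NS$ are trivializable. For each $\alpha$ the exponential map gives a diffeomorphism $\varphi_\alpha\colon U_\alpha\times B_h^k\to \pi^{-1}(U_\alpha)\subset\W_h$ with $\pi\circ\varphi_\alpha(x,y)=x$, and in this chart the image of $\iota$ at $\varphi_\alpha(x,y)$ is exactly the vertical tangent space, i.e.\ the span of $\partial/\partial y^1,\dots,\partial/\partial y^k$. Therefore the hypothesis $df\circ\iota=0$ translates, under $\varphi_\alpha$, into the vanishing of the weak partial derivatives $\partial(f\circ\varphi_\alpha)/\partial y^j=0$ for $j=1,\dots,k$. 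Since the Riemannian volume form on $\pi^{-1}(U_\alpha)$ is comparable to the product Lebesgue measure (uniformly in $h\leq h_0$, by Corollary~\ref{cy:tgg} and boundedness of the chart), $f\circ\varphi_\alpha\in W^{1,p}(U_\alpha\times B_h^k;\R^n)$.

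Next, by Fubini's theorem (as in the standard slicing characterization of Sobolev spaces, e.g.\ Evans--Gariepy), for a.e.\ $x\in U_\alpha$ the slice $y\mapsto f\circ\varphi_\alpha(x,y)$ is in $W^{1,p}(B_h^k;\R^n)$ with zero weak gradient. Since $B_h^k$ is connected, this slice is constant a.e., so we may define $F_\alpha(x)\in\R^n$ to be this common value. To see $F_\alpha\in W^{1,p}(U_\alpha;\R^n)$, I would average in the fiber: $F_\alpha(x)=\dashint_{B_h^k} f\circ\varphi_\alpha(x,y)\,dy$, and use Fubini to compute weak tangential derivatives of $F_\alpha$ as averages of the tangential derivatives of $f\circ\varphi_\alpha$, which are bounded in $L^p$ because the full differential $df$ is in $L^p$.

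Finally, to globalize, note that on an overlap $U_\alpha\cap U_\beta$ the two local definitions $F_\alpha$ and $F_\beta$ agree a.e., since each is obtained by selecting the a.e.\ constant value of $f$ along the (same) fiber $\pi^{-1}(x)$. Hence the $F_\alpha$ patch to a single $F\colon \S\to\R^n$ with $F\in W^{1,p}(\S;\R^n)$ and $f=F\circ\pi$ a.e.\ on $\W_h$, as required. The only real obstacle is bookkeeping: making sure that the vertical subspace identification via $\iota$ matches the coordinate vertical derivatives in the local chart, and that the comparison between $\go$, $\tgo$, and the flat product metric on $U_\alpha\times B_h^k$ does not change the $W^{1,p}$-class; both follow from the Riemannian machinery already set up (Corollary~\ref{cy:tgg} and the definition of $\sigma\oplus\iota$ from parallel transport on $\NS$).
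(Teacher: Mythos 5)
Your proof is correct and follows essentially the same route as the paper's: constancy of $f$ along fibers deduced from $df\circ\iota=0$ together with $\im\iota=\ker d\pi$, followed by the fiber-average representation of $F$ to conclude $F\in W^{1,p}(\S;\R^n)$. The paper argues the constancy informally via curves $\gamma:I\to\pi^{-1}(p)$ and writes the average intrinsically as $F=\tfrac{1}{\nu_k h^k}\pi_\star(f\,\omega)$, whereas your localization plus the Fubini/slicing characterization of $W^{1,p}$ is the standard way to make that step rigorous for a function defined only a.e., so your write-up is, if anything, the more careful version of the same argument.
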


\begin{proof}
Let $p\in\S$ be given and let
$\gamma:I\to \pi^{-1}(p)$ be a curve in the fiber of $\W_h$ above $p$.
Since $\pi\circ\gamma = p$, it follows that
\[
d\pi(\dot{\gamma}) = 0.
\]
Since $d\pi \circ \iota = 0$ and $\dim \im \iota = \dim \ker (d\pi) = k$, we have that $\im \iota = \ker (d\pi)$, therefore $\dot{\gamma} \in \im \iota$, and since $df\circ\iota$ is follows that
\[
df(\dot{\gamma}) = 0,
\]
hence $f\circ\gamma = \text{const}$, and there exists an $F:\S\to\R^n$ such that $f = F\circ\pi$.

Note that $F$ can be expressed as
\[
F = \frac{\pi_\star(F\circ\pi\,\omega)}{\pi_\star(\omega)} = \frac{\pi_\star(f\,\omega)}{\pi_\star(\omega)} = \frac{1}{\nu_k h^k} \pi_\star(f\,\omega),
\]
hence $F\in W^{1,p}(\S;\R^n)$.
\end{proof}

{The following lemma generalizes to sequences over tubular neighborhoods the classical fact that bounded sequences in a reflexive Banach space have a weakly compact subsequence.}

\begin{lemma}
\label{lm:wk w1p conv}
Suppose that $f_h\in W^{1,p}(\W_{h_0h};\R^n)$ is a  uniformly bounded sequence (each $f_h$ with its respective volume-averaged norm).
Then there exists a $F \in W^{1,p}(\S;\R^n)$ and a subsequence $f_{h_n}$ such that $f_{h_n}\circ \mu_{h_n} \weakly{} F\circ\pi$ in $W^{1,p}(\W_{h_0};\R^n)$. In particular, $f_{h_n}\to F$ in the strong $L^p$ topology.
\end{lemma}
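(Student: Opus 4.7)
The plan is to rescale the sequence to the fixed tubular neighborhood $\W_{h_0}$ by setting $g_h \defi f_h\circ \mu_h$, apply standard weak compactness in $W^{1,p}(\W_{h_0};\R^n)$, and then show that the weak limit $g$ must factor through $\pi$. The key geometric input, from Lemma \ref{lm:resc}(1), is that $d\mu_h\circ\sigma_\xi = \sigma_{h\xi}$ but $d\mu_h\circ\iota_\xi = h\,\iota_{h\xi}$: the rescaling suppresses normal derivatives by a factor of $h$ while leaving tangential derivatives unchanged, and in the limit $h\to 0$ this suppression forces the weak limit to have vanishing normal derivative.

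First I would verify that $\{g_h\}$ is bounded in $W^{1,p}(\W_{h_0};\R^n)$. Lemma \ref{lm:resc}(3), applied to $|f_h|^p$ and combined with the identity $\eta\wedge\omega = \Volumet$, relates $\dashint_{\W_{h_0 h}} |f_h|^p \,\Volume$ to $\int_{\W_{h_0}} |g_h|^p \,\Volumet$ up to an $h$-independent multiplicative constant, so the $L^p$ bound on $f_h$ transfers to a uniform $L^p(\W_{h_0},\tgo)$ bound on $g_h$. For the derivative, decomposing $v \in T\W_{h_0}$ as $v = v^\parallel + v^\perp$ along $\sigma\oplus\iota$ yields
\[
|d\mu_h v|_{\tgo}^2 = |v^\parallel|_{\tgo}^2 + h^2|v^\perp|_{\tgo}^2 \le |v|_{\tgo}^2 \qquad (h\le 1),
\]
hence $|dg_h|_{\tgo,\xi}\le |df_h|_{\tgo, h\xi}$, and the same application of Lemma \ref{lm:resc}(3) to $|df_h|^p$ gives a uniform bound on $\int_{\W_{h_0}}|dg_h|^p\,\Volumet$. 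Equivalence of the $\go$- and $\tgo$-induced $W^{1,p}$ norms (Corollary \ref{cy:tgg}(3)) then yields uniform boundedness of $\{g_h\}$ in $W^{1,p}(\W_{h_0};\R^n)$.

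Since $p>1$, $W^{1,p}(\W_{h_0};\R^n)$ is reflexive, so a subsequence $g_{h_n}$ converges weakly to some $g$ in $W^{1,p}$, and by Rellich--Kondrachov strongly in $L^p$. To identify $g$ I would show $(dg)\circ\iota = 0$ a.e. From $(dg_h)\circ \iota_\xi = h\,(df_h)_{h\xi}\circ\iota_{h\xi}$ and the same change of variables,
\[
\int_{\W_{h_0}} |(dg_h)\circ\iota|^p\,\Volumet \le C\, h^p \,\longrightarrow\, 0,
\]
so $(dg_{h_n})\circ\iota\to 0$ strongly in $L^p$. On the other hand, composition with the fixed, bounded bundle morphism $\iota$ is weak-to-weak continuous, so $(dg_{h_n})\circ \iota\weakly{} (dg)\circ \iota$; uniqueness of limits forces $(dg)\circ \iota = 0$. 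Lemma \ref{lm:dfi=0} then produces $F\in W^{1,p}(\S;\R^n)$ with $g = F\circ\pi$, establishing $g_{h_n}\weakly{} F\circ\pi$ in $W^{1,p}(\W_{h_0};\R^n)$. The strong $L^p$ convergence $g_{h_n}\to F\circ\pi$ combined with Lemma \ref{lm:str lp conv} then yields $f_{h_n}\to F$ in the strong $L^p$ topology of Definition \ref{df:conv}.

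The main obstacle is to see that the two rescaling factors balance correctly: the factor $h$ on the normal component of $d\mu_h$ must drive $(dg_h)\circ\iota$ to zero in the limit (forcing the limit to factor through $\pi$), while the factor $h^k$ in $\mu_h^\star\Volumet$ is precisely what is needed to convert volume-averaged bounds on $f_h$ into fixed-domain $W^{1,p}$ bounds on $g_h$. Organizing the computation via the bundle splitting $\sigma\oplus\iota$ and the auxiliary metric $\tgo$ (rather than $\go$ directly) makes these anisotropic scalings transparent and reduces the problem to a standard weak compactness argument in a fixed Sobolev space.
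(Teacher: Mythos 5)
Your proposal is correct and follows essentially the same route as the paper: rescale by $\mu_h$, use Lemma \ref{lm:resc} and the splitting $\sigma\oplus\iota$ under the metric $\tgo$ to transfer the volume-averaged bounds to a uniform $W^{1,p}(\W_{h_0};\R^n)$ bound on $f_h\circ\mu_h$, extract a weak limit, kill the normal derivative via the $O(h^p)$ estimate on $d(f_h\circ\mu_h)\circ\iota$, and conclude with Lemmas \ref{lm:dfi=0} and \ref{lm:str lp conv}. The only cosmetic difference is that you use the inequality $|d\mu_h v|_{\tgo}\le|v|_{\tgo}$ where the paper writes out the exact tangential/normal decomposition, and you make explicit the weak-to-weak continuity of composition with $\iota$, which the paper leaves implicit.
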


\begin{proof}
Part 3 of Lemma \ref{lm:resc} implies that a sequence $y_h \in L^p(\W_{h_0h})$ is uniformly bounded if and only if the sequence $y_h\circ \mu_h$ is uniformly bounded in $L^p(\W_{h_0})$. Therefore the boundedness of $f_h$ in $W^{1,p}(\W_{h_0h};\R^n)$ implies that  $f_h\circ\mu_h$ is uniformly bounded in $L^p(\W_{h_0};\R^n)$ and that $\int_{\W_{h_0}} |df_h|^p\circ\mu_h\, \eta\wedge\omega$ is uniformly bounded.


For $f\in  W^{1,p}(\W_{h_0h};\R^n)$ and $\xi\in\W_{h_0}$,
\[
\begin{split}
|df|^2_{\tgo} \circ\mu_h (\xi) &= \tgo_{h\xi}(d_{h\xi}f,d_{h\xi}f) 
= \go_{\pi(\xi)}(d_{h\xi}f\circ(\sigma\oplus\iota)_{h\xi},d_{h\xi}f\circ(\sigma\oplus\iota)_{h\xi})=\\
	&= \go_{\pi(\xi)}(d_{h\xi}f\circ(\sigma\oplus0)_{h\xi},d_{h\xi}f\circ(\sigma\oplus0)_{h\xi})+\\
	&\quad + \go_{\pi(\xi)}(d_{h\xi}f\circ(0\oplus\iota)_{h\xi},d_{h\xi}f\circ(0\oplus\iota)_{h\xi}).
\end{split}
\]
The last equation follows from the definition of the inner product on the cotangent bundle and the fact that $(d_{h\xi}f\circ(0\oplus\iota)_{h\xi})^\sharp\in \NS$ and that $(d_{h\xi}f\circ(\sigma\oplus0)_{h\xi})^\sharp\in T\S$.
Since $\int_{\W_{h_0}} |df_h|^p\circ\mu_h\, \eta\wedge\omega$ is uniformly bounded, it follows that
\[
\int_{\W_{h_0}}(\go_{\pi(\xi)}(d_{h\xi}f\circ(\sigma\oplus0)_{h\xi},d_{h\xi}f\circ(\sigma\oplus0)_{h\xi}))^{p/2}\, \eta\wedge\omega(\xi),
\]
and
\[
\int_{\W_{h_0}}(\go_{\pi(\xi)}(d_{h\xi}f\circ(0\oplus\iota)_{h\xi},d_{h\xi}f\circ(0\oplus\iota)_{h\xi}))^{p/2}\, \eta\wedge\omega(\xi)
\] 
are also uniformly bounded.
On the other hand, part 1 of Lemma \ref{lm:resc} implies that
\begin{equation}
\label{eq:dfmu}
\begin{split}
|d(f\circ\mu_h)|^2_{\tgo} (\xi) &= \tgo_{\xi}(d_{h\xi}f\circ d_\xi\mu,d_{h\xi}f\circ d_\xi\mu)=\\ 
	&= \go_{\pi(\xi)}(d_{h\xi}f \circ d_\xi\mu \circ(\sigma\oplus\iota)_{\xi},d_{h\xi}f \circ d_\xi\mu \circ(\sigma\oplus\iota)_{\xi})=\\
	&= \go_{\pi(\xi)}(d_{h\xi}f \circ(\sigma\oplus h\iota)_{h\xi},d_{h\xi}f \circ(\sigma\oplus h\iota)_{h\xi})=\\
	&= \go_{\pi(\xi)}(d_{h\xi}f\circ(\sigma\oplus0)_{h\xi},d_{h\xi}f\circ(\sigma\oplus0)_{h\xi})+\\
	&\quad+ h^2\go_{\pi(\xi)}(d_{h\xi}f\circ(0\oplus\iota)_{h\xi},d_{h\xi}f\circ(0\oplus\iota)_{h\xi}),
\end{split}
\end{equation}
and therefore $\int_{\W_{h_0}} |d(f_h\circ \mu_h)|^p_{\tgo}\, \eta\wedge\omega$ is uniformly bounded, hence $f_h\circ \mu_h$ is a bounded sequence in $W^{1,p}(\W_{h_0};\R^n)$.
It follows that $f_h\circ \mu_h$  has a subsequence weakly convergent  in $W^{1,p}(\W_{h_0};\R^n)$ (recall that by Corollary \ref{cy:tgg} convergence with respect to $\go$ is equivalent to convergence with respect to $\tgo$); denote the limit by $f$.
Equation \eqref{eq:dfmu} implies that $\int_{\W_{h_0}} |d(f_h\circ\mu_h)\circ \iota|^p \eta\wedge\omega = O(h^p)$, hence $df\circ \iota = 0$ a.e.

Applying Lemma \ref{lm:dfi=0}, there exists {an} $F \in W^{1,p}(\S;\R^n)$ such that $f=F\circ\pi$.
Therefore, $f_{h_n}\circ \mu_{h_n} \weakly{} F\circ\pi$ in $W^{1,p}(\W_{h_0};\R^n)$, and in particular $f_{h_n}\circ \mu_{h_n} \to F\circ\pi$ in $L^{p}(\W_{h_0};\R^n)$. By Lemma \ref{lm:str lp conv}, $f_{h_n}\to F$ in the strong $L^p$ topology.
\end{proof}

\subsection{$\Gamma$-convergence}

The main theorem of this paper is concerned with $\Gamma$-convergence of functionals $I_h: L^p(\W_h;\R^n) \to \bar{\R}$ to a functional $I: L^p(\S;\R^n)  \to \bar{\R}$. Since the standard definition of $\Gamma$-convergence requires the functionals to be defined on the same space, we need a definition of $\Gamma$-convergence over shrinking tubular neighborhoods, and establish its properties.
The proof of properties satisfied by $\G$-convergence over shrinking domains is essentially the same as for fixed domains, and will therefore be omitted; see \cite{Dal93} for details.

\begin{definition}
Let $I_h: L^p(\W_h;\R^n) \to \bar{\R}$ and $I: L^p(\S;\R^n)  \to \bar{\R}$. We will say that $I_h$ $\G$-converges to $\tI$ in the strong $L^p$ topology if
\begin{dingnum}
\item Lower-semicontinuity: for every sequence $f_h\to F$,
\[
I(F) \le \liminf_{h\to0} I_h(f_h).
\]
\item Recovery sequence: for every $F$ there exists a  sequence $f_h\to F$ such that
\[
I(F) = \lim_{h\to0} I_h(f_h).
\]
\end{dingnum}
\end{definition}

An equivalent topological definition is given by the following lemma:

\begin{lemma}
\label{lem:equivalence}
$I_h \GC I$ if and only if for every $F$,
\[
\lim_{\e\to0} \liminf_{h\to0} \inf_{B_\e(F\circ\pi)} I_h(\cdot) = 
\lim_{\e\to0} \limsup_{h\to0} \inf_{ B_\e(F\circ\pi)} I_h(\cdot) = I(F).
\]
\end{lemma}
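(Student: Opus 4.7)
The plan is to mirror the classical equivalence between sequential and topological formulations of $\Gamma$-convergence (as in Dal Maso~\cite{Dal93}), but using Definition~\ref{df:conv} in place of ordinary $L^p$-convergence. All that is required is that ``$f_h \to F$'' be equivalent to ``$f_h \in B_\e(F\circ\pi)$ for all sufficiently small $h$ for every $\e>0$'', which is built into Definition~\ref{df:conv}. Throughout, I allow $I(F) \in \{-\infty,+\infty\}$; the inequalities are interpreted in the extended reals.

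For the forward direction, suppose $I_h \Gconv I$ in the sequential sense. First, given $F$, pick a recovery sequence $f_h \to F$ with $I_h(f_h) \to I(F)$. For each fixed $\e>0$, eventually $f_h \in B_\e(F\circ\pi)$, so $\inf_{B_\e(F\circ\pi)} I_h(\cdot) \le I_h(f_h)$, whence $\limsup_{h\to 0} \inf_{B_\e(F\circ\pi)} I_h \le I(F)$, and the bound persists as $\e\to 0$. For the matching lower bound, diagonalise: for each $n\in\bbN$ pick $h_n \downarrow 0$ so that $\inf_{B_{1/n}(F\circ\pi)} I_{h_n}$ is within $1/n$ of $\liminf_{h\to 0}\inf_{B_{1/n}(F\circ\pi)} I_h$, then pick $g_n \in B_{1/n}(F\circ\pi)$ with $I_{h_n}(g_n) \le \inf_{B_{1/n}(F\circ\pi)} I_{h_n}+1/n$. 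Since $g_n \to F$ in the sense of Definition~\ref{df:conv}, the liminf inequality gives $I(F) \le \liminf_n I_{h_n}(g_n) \le \lim_{\e\to 0} \liminf_{h\to 0} \inf_{B_\e(F\circ\pi)} I_h$. Combining with the $\limsup$ bound and the trivial inequality $\liminf \le \limsup$ yields the topological equality.

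For the reverse direction, assume the topological condition. To obtain the liminf inequality, let $f_h \to F$; for any $\e>0$, $f_h \in B_\e(F\circ\pi)$ eventually, hence $I_h(f_h) \ge \inf_{B_\e(F\circ\pi)} I_h$, and taking $\liminf_{h\to 0}$ followed by $\e\to 0$ gives $\liminf_{h\to 0} I_h(f_h) \ge I(F)$. To produce a recovery sequence I construct $f_h$ directly (rather than up to subsequence). Assuming $I(F)<\infty$, by the topological condition pick a strictly decreasing sequence $\e_n \downarrow 0$ and a strictly decreasing $h_n \downarrow 0$ such that for all $h \le h_n$,
\[
\inf_{B_{\e_n}(F\circ\pi)} I_h(\cdot) \le I(F) + \tfrac{1}{n}.
\]
For each $h \in (h_{n+1},h_n]$, define $n(h)=n$ and choose $f_h \in B_{\e_{n(h)}}(F\circ\pi)$ with $I_h(f_h) \le \inf_{B_{\e_{n(h)}}(F\circ\pi)} I_h + \tfrac{1}{n(h)} \le I(F) + \tfrac{2}{n(h)}$. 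As $h\to 0$ we have $n(h) \to \infty$, so $f_h \to F$ and $\limsup_{h\to 0} I_h(f_h) \le I(F)$; combined with the liminf inequality already proved, $\lim I_h(f_h) = I(F)$. The case $I(F)=\infty$ is trivial, taking any sequence with $f_h \to F$.

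The only genuinely delicate point is the recovery-sequence construction, because a standard subsequential diagonal extraction is not enough: the definition demands a value $f_h$ for every $h$, not merely along a subsequence $h_n$. This is handled by the monotone indexing $n(h)$ above, which is possible because $h$ ranges over a continuous parameter and the topological assumption supplies, for each $n$, a full neighbourhood $(0,h_n]$ of admissible scales. Everything else is a routine transcription of the proof on a fixed Banach space, since compactness, the reflexivity-based arguments, and the structure of balls $B_\e(F\circ\pi)$ are all encoded in Definition~\ref{df:conv} and Lemma~\ref{lm:str lp conv}.
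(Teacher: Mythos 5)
Your proof is correct, and it is exactly the classical Dal Maso argument that the paper itself invokes but omits (the text preceding the lemma states that proofs of such properties over shrinking domains are ``essentially the same as for fixed domains'' and defers to \cite{Dal93}); the only adaptation needed is the one you supply, namely reading ``$f_h\to F$'' through Definition~\ref{df:conv} and handling the continuous parameter $h$ via the monotone indexing $n(h)$ in the recovery-sequence construction. No gaps.
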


\begin{proposition}[Urysohn's property]
\label{pr:ury-gc}
If for every sequence $h_n\to0$ there exists a subsequence $h_{n_k}$ such that
\[
I_{h_{n_k}} \GC I,
\]
then $I_{h} \GC I$.
\end{proposition}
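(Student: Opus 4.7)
The plan is to reduce everything to the topological characterization of $\Gamma$-convergence provided by Lemma~\ref{lem:equivalence}, and then verify the resulting double-limit equalities by a contradiction argument that invokes the subsequence hypothesis only at the last moment. Fix $F\in L^p(\S;\R^n)$ and set $a_\e(h)\defi\inf_{B_\e(F\circ\pi)} I_h(\cdot)$. Since $B_{\e'}\supset B_\e$ for $\e'\ge\e$, the quantity $a_\e(h)$ is non-increasing in $\e$, so $\liminf_{h\to0} a_\e(h)$ and $\limsup_{h\to0} a_\e(h)$ are non-decreasing as $\e\to0$ and their limits, call them $\Phi^-(F)$ and $\Phi^+(F)$, exist as suprema over $\e$. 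I would have to show $\Phi^-(F)=\Phi^+(F)=I(F)$.

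The inequalities $\Phi^-(F)\le I(F)\le \Phi^+(F)$ are the easy half. For any positive sequence $h_n\to0$ the hypothesis furnishes a subsequence $(h_{n_k})$ along which $I_{h_{n_k}}\GC I$, and Lemma~\ref{lem:equivalence} applied to this subsequence yields
\[
\lim_{\e\to0}\liminf_{k\to\infty} a_\e(h_{n_k}) = \lim_{\e\to0}\limsup_{k\to\infty} a_\e(h_{n_k}) = I(F).
\]
Combining with the elementary subsequential inequalities $\liminf_{h\to0}a_\e(h)\le\liminf_{k\to\infty}a_\e(h_{n_k})$ and $\limsup_{h\to0}a_\e(h)\ge\limsup_{k\to\infty}a_\e(h_{n_k})$ and then sending $\e\to0$ gives both bounds.

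The reverse inequalities are by contradiction. If $\Phi^+(F) > I(F)$, choose $\e_0,\delta>0$ and a sequence $h_n\to0$ with $a_{\e_0}(h_n)\ge I(F)+\delta$; extracting a $\Gamma$-convergent sub-subsequence $(h_{n_k})$ via the hypothesis, Lemma~\ref{lem:equivalence} forces $\limsup_k a_{\e_0}(h_{n_k}) \le \sup_\e\limsup_k a_\e(h_{n_k})=I(F)$, contradicting $a_{\e_0}(h_{n_k})\ge I(F)+\delta$. The symmetric case $\Phi^-(F)<I(F)$ is slightly subtler: here $\liminf_{h\to0} a_\e(h)\le\Phi^-(F)\le I(F)-3\delta$ for \emph{every} $\e>0$, so a diagonal selection along $\e_m=1/m$ produces a sequence $h_m\to0$ with $a_{1/m}(h_m)\le I(F)-2\delta$; passing to any $\Gamma$-convergent sub-subsequence $(h_{m_j})$ and using Lemma~\ref{lem:equivalence}, one obtains some $\e^{*}>0$ with $\liminf_j a_{\e^{*}}(h_{m_j}) > I(F)-\delta$, which is incompatible with $a_{\e^{*}}(h_{m_j})\le a_{1/m_j}(h_{m_j})\le I(F)-2\delta$ for $j$ large enough that $1/m_j<\e^{*}$.

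The main conceptual obstacle is that the subsequence hypothesis and the topological criterion involve two independent limits (over $h$ on the one hand, over $\e$ on the other), and one must avoid silently interchanging them; the diagonal step in the $\Phi^-$ case is where this interaction is genuinely used, while everywhere else only monotonicity of $a_\e$ in $\e$ intervenes. Apart from the preliminary identification of the correct notion of neighborhood ball via Definition~\ref{df:conv} and Lemma~\ref{lem:equivalence}, the argument is a verbatim transcription of the standard proof of the Urysohn property for $\Gamma$-convergence on a fixed domain, so no additional input from the tubular-neighborhood geometry is required.
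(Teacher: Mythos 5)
Your argument is correct. The paper itself gives no proof of Proposition~\ref{pr:ury-gc} — it explicitly defers to the standard fixed-domain theory in \cite{Dal93} — so there is nothing to compare against except the standard argument, and yours is a faithful and complete transcription of it through the topological characterization of Lemma~\ref{lem:equivalence}: the monotonicity of $a_\e(h)$ in $\e$, the subsequential bounds for the easy inequalities, and the diagonal extraction in the $\Phi^-$ case (which is indeed the only place where the two limits genuinely interact) are all handled correctly. The single loose end is that $I$ and the $I_h$ take values in $\bar\R$, so when $I(F)=+\infty$ the quantities $I(F)-3\delta$ are not defined; the fix is cosmetic (replace $I(F)-3\delta$ by an arbitrary constant $M<\Phi^-(F)$ in the $\Phi^-$ argument, and note that $\Phi^+\le I(F)$ is vacuous), and it is worth a sentence but does not affect the structure of the proof.
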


\begin{proposition}[Sequential compactness]
\label{pr:comp-gc}
Every sequence of functionals $I_h:L^p(\W_h;\R^n)\to\bar{\R}$ has a $\G$-converging partial limit
$I: L^p(S;\R^n)\to\bar{\R}$.
\end{proposition}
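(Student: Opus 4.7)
The plan is to adapt the classical $\G$-compactness theorem for sequences of functionals on a fixed separable metric space (cf.~\cite{Dal93}) to the present shrinking-domain setting, using the topological characterization in \lemref{lem:equivalence}. Although each $I_h$ lives on a different space $L^p(\W_h;\R^n)$, all information relevant to $\G$-convergence is encoded in the numbers $\inf_{B_\e(F\circ\pi)} I_h(\cdot)$, indexed by $F\in L^p(\S;\R^n)$ and $\e>0$, so the argument effectively takes place on the separable space $L^p(\S;\R^n)$.

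First I would fix a countable dense subset $\{F_k\}_{k\ge 1}\subset L^p(\S;\R^n)$ and a sequence $\e_j\downarrow 0$. For each pair $(k,j)$ the sequence $h\mapsto \inf_{B_{\e_j}(F_k\circ\pi)} I_h(\cdot)$ takes values in the sequentially compact space $\bar\R$, so a Cantor diagonal extraction produces a subsequence $h_n\to 0$ along which
\[
\ell(k,j) \defi \lim_{n\to\infty} \inf_{B_{\e_j}(F_k\circ\pi)} I_{h_n}(\cdot)
\]
exists in $\bar\R$ for every $k,j$. I would then define the candidate limit
\[
I(F) \defi \sup\BRK{\ell(k,j) : \Norm{F-F_k}_{L^p(\S;\R^n)} < \e_j}.
\]

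The verification that $I_{h_n}\GC I$ via \lemref{lem:equivalence} rests on the uniform norm comparison
\[
\Norm{F\circ\pi-F_k\circ\pi}_{L^p(\W_{h_n};\R^n)} = (1+O(h_n))\, \Norm{F-F_k}_{L^p(\S;\R^n)},
\]
which follows from \lemref{lm:cm} combined with Lemmas~\ref{lm:propeo} and~\ref{lm:resc}. Given this, standard ball-inclusion arguments yield
\[
\lim_{\e\to 0}\limsup_n \inf_{B_\e(F\circ\pi)} I_{h_n} \le I(F) \le \lim_{\e\to 0}\liminf_n \inf_{B_\e(F\circ\pi)} I_{h_n}.
\]
For the first inequality, pick $(k,j)$ with $\Norm{F-F_k}_{L^p(\S;\R^n)}<\e_j<\e/2$, so that eventually $B_{\e_j}(F_k\circ\pi)\subseteq B_\e(F\circ\pi)$ and hence $\inf_{B_\e(F\circ\pi)} I_{h_n}\le \inf_{B_{\e_j}(F_k\circ\pi)} I_{h_n}$. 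For the second, each admissible $(k,j)$ satisfies $B_r(F\circ\pi)\subseteq B_{\e_j}(F_k\circ\pi)$ for $r<\e_j-\Norm{F-F_k}_{L^p(\S;\R^n)}$ and large $n$, and hence $\inf_{B_r(F\circ\pi)} I_{h_n}\ge \inf_{B_{\e_j}(F_k\circ\pi)} I_{h_n}$. Combined with the trivial $\liminf\le\limsup$, all three iterated limits equal $I(F)$, which is exactly the criterion of \lemref{lem:equivalence}.

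The main obstacle is not the diagonal extraction, which is standard, but the bookkeeping of ball inclusions across the varying ambient spaces $L^p(\W_{h_n};\R^n)$. This is handled uniformly by the norm comparison above: its $O(h_n)$ error vanishes in the limit and does not interfere with the required inclusions.
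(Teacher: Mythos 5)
Your proof is correct and is exactly the argument the paper has in mind: the paper omits the proof entirely, stating that it is ``essentially the same as for fixed domains'' and deferring to \cite{Dal93}, and your diagonal extraction over a countable dense set $\{F_k\}$ and radii $\e_j$, together with the characterization of \lemref{lem:equivalence}, is precisely that classical compactness argument. The only genuinely new ingredient required by the shrinking-domain setting is the norm comparison $\Norm{F\circ\pi-F_k\circ\pi}_{L^p(\W_h;\R^n)}=(1+O(h))\Norm{F-F_k}_{L^p(\S;\R^n)}$, which you correctly identify and justify via \lemref{lm:cm}.
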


Propositions \ref{pr:ury-gc} and \ref{pr:comp-gc} immediately imply the following corollary:
\begin{corollary}
\label{cy:gc}
$I_h\GC I$ if and only if $I$ is the limit of every $\Gamma$-convergent subsequence.
\end{corollary}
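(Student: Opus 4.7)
The proof strategy is to deduce both implications from Proposition~\ref{pr:ury-gc} (Urysohn) and Proposition~\ref{pr:comp-gc} (sequential compactness), using the topological characterization of $\G$-convergence provided by Lemma~\ref{lem:equivalence}. The elementary fact used throughout is the monotonicity
\[
\liminf_{h\to0}a_h \le \liminf_{n\to\infty} a_{h_n} \le \limsup_{n\to\infty} a_{h_n} \le \limsup_{h\to0}a_h,
\]
valid for any function $h\mapsto a_h$ and any subsequence $h_n\to 0$, since such a subsequence lies eventually inside every punctured neighborhood of $0$.

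For the ``only if'' direction, I would suppose $I_h\GC I$ and let $I_{h_n}\GC I'$ be any $\G$-convergent subsequence. Setting $a_h = \inf_{B_\e(F\circ\pi)} I_h(\cdot)$ in the chain above and passing to the limit $\e\to 0$, Lemma~\ref{lem:equivalence} applied both to the full family and to the subsequence identifies the outer terms with $I(F)$ and the inner ones with $I'(F)$, forcing $I(F) = I'(F)$ for every $F$, hence $I=I'$.

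For the ``if'' direction, suppose instead that $I$ is the limit of every $\G$-convergent subsequence. By Proposition~\ref{pr:ury-gc} it suffices to exhibit, for every sequence $h_n\to 0$, a further subsequence $h_{n_k}$ along which $I_{h_{n_k}}\GC I$. Fixing such an $h_n$, Proposition~\ref{pr:comp-gc} produces a $\G$-convergent subsequence $I_{h_{n_k}}\GC I'$, and the standing hypothesis identifies $I'$ with $I$, verifying the Urysohn criterion.

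There is no real obstacle: the corollary is purely the conjunction of the two previously stated propositions, packaged through Lemma~\ref{lem:equivalence}. The only point requiring a touch of care is keeping straight the roles of the two ingredients---compactness to produce $\G$-convergent subsequences, and Urysohn's property to upgrade such subsequential convergence to convergence of the full one-parameter family.
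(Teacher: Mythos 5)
Your argument is correct and is exactly the route the paper intends: the paper simply asserts that the corollary is immediate from Propositions~\ref{pr:ury-gc} and~\ref{pr:comp-gc}, and your proof supplies the missing details, using Lemma~\ref{lem:equivalence} together with the standard $\liminf/\limsup$ monotonicity under passage to subsequences for the ``only if'' direction, and compactness plus Urysohn for the ``if'' direction. No gaps.
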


\subsection{Quasiconvexity}

\begin{definition}
\label{df:QC}
Let $(\calM,\go)$ be a Riemannian manifold, and let $U: T^*\calM\otimes\R^m \to \R$ be a fiber-wise locally integrable function. 
We say that $U$ is \textbf{quasiconvex} if for every $p\in \calM$,  every $A\in T_p^*\calM\otimes\R^m$,   every open bounded set $D_p\subset T_p\calM$, and every $\vp\in C_0^\infty (D_p; \R^m)$,
\beq
U(A) \le \dashint_{D_p} U(A + d\phi\circ\kappa)\,\omega_p,
\label{eq:QCdef}
\eeq
where $\kappa:\pi^*T_p\calM\to TT_p\calM$ is the canonical identification, and $\omega_p$ is the $n$-form
\[
\omega_p = \left. \Volume\right|_{p}\circ \Lambda^n \kappa^{-1}.
\]
\end{definition}
The integrand in \eqref{eq:QCdef} reads as follows: for $\xi\in D_p\subset T_p\calM$ and $\eta\in T_p\calM$,
\[
[d\phi\circ\kappa(\xi)](\eta) =  d_\xi\phi\circ\kappa_\xi(\eta)= d_\xi\phi([\xi + \eta t]) \in \R^m,
\]
hence $U(A + d\phi\circ\kappa)$ is indeed a map from $D_p$ to $\R^m$.
As for the volume form $\omega_p$, choosing a coordinate system $(x_1,\ldots,x_n)$ in $\calM$, and denoting by $(\partial_1,\ldots,\partial_n)$ the corresponding coordinates on $T_p\calM$, we have that $\omega_p$ is of the form
\[
\omega_p = \sqrt{\det \go_{ij}(p)}\, d_p x_1\circ \kappa^{-1} \wedge \dots \wedge d_p x_n\circ \kappa^{-1} = \sqrt{\det \go_{ij}(p)}\, d\partial_1 \wedge \dots \wedge d\partial_n.
\]
The constant $\sqrt{\det \go_{ij}(p)}$ cancels {upon} volume average, and therefore the above definition  reduces, after choosing coordinates, to the classical definition of quasiconvexity, see e.g. \cite{AF84} or \cite{Dac89} (it also shows that the definition is independent of the metric).

In the proof of the main theorem we will need the following two relations between quasiconvexity and lower-semicontinuity, 
which are extensions of classical results to the Riemannian setting (see \cite{AF84}); the proofs are in Appendix~\ref{appendix:qc}.
In these theorems, we assume that $(\calM,\go)$ is a Riemannian manifold of finite volume that can be covered by a finite number of charts (note that $\S$ and therefore $\W_h$, satisfy this condition), and that $U: T^*\calM\otimes\R^m \to \R$ is a Carath\'eodory function (see Appendix~\ref{ap:mst}) that satisfies the growth condition
\beq
\label{eq:growthqc}
-\beta \le U(q) \le C(1+|q|^p)
\eeq
for some $\beta,C>0$ and $p\in(1,\infty)$. 
Both theorems  also hold also if $W^{1,p}$ is replaced with $W^{1,p}_0$.

\begin{theorem}
\label{tm:qc1}
Under the above conditions, the functional $I_A:W^{1,p}(A;\R^m)\to \R$ defined by
\[
I_A(f) := \int_A U(df)\,\,\Volume,
\]
where $A\subset\calM$ is an open subset, is weakly sequential lower-semicontinuous for every $A$ if and only if $U$ is quasiconvex.
\end{theorem}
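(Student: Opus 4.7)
The plan is to reduce the statement to its classical Euclidean counterpart of Acerbi and Fusco \cite{AF84} by working in local charts. The reduction is natural because Definition~\ref{df:QC} is coordinate-independent, $\calM$ admits a finite atlas by assumption, and the metric enters both $U$ and the volume form only through continuous factors that degenerate to constants under localization. I would treat the two implications separately.

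For the \emph{necessity} direction, assume $I_A$ is weakly sequentially lower-semicontinuous for every open $A\subset\calM$. Given the data $p,A,D_p,\vp$ from Definition~\ref{df:QC}, pick normal coordinates centered at $p$, so that $\go_{ij}(p)=\delta_{ij}$, and choose a chart neighborhood $B$ containing a translate of $D_p$. Extend $\vp$ to a $D_p$-periodic function on $T_p\calM$ and form the standard oscillating sequence $f_\e(x):=A(x-p)+\e\bar\vp((x-p)/\e)$ read in coordinates. Then $f_\e$ converges weakly to the affine map $x\mapsto A(x-p)$ in $W^{1,p}(B;\R^m)$. Applying LSC on $B$, shrinking $B$ to $\{p\}$, and invoking the continuity of $\go$ (so that the factor $\sqrt{\det\go_{ij}}$ from $\Volume$ cancels in the volume average) recovers \eqref{eq:QCdef}.

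For the \emph{sufficiency} direction, assume $U$ is quasiconvex and let $f_n\weakly{} f$ in $W^{1,p}(A;\R^m)$. Define $\mu_n:=(U(df_n)+\beta)\,\Volume$; by the growth condition these are nonnegative Radon measures of uniformly bounded mass, so after subsequence extraction $\mu_n\weakstar\mu$ for some nonnegative Radon $\mu$ on $A$. Writing the Lebesgue decomposition $\mu = h\,\Volume + \mu^s$ with $h\ge 0$ and $\mu^s\ge 0$ singular, it suffices to show $h(p)\ge U(df(p))+\beta$ at almost every $p\in A$. At such a $p$ which is simultaneously a Lebesgue point of $df$ and of $|df|^p$ and at which the Radon-Nikodym derivative $h(p)$ is realized as a symmetric derivative of $\mu$ with respect to $\Volume$, perform a Fonseca-M\"uller blow-up in normal coordinates: the rescaled maps $g_n^r(y):=r^{-1}[f_n(\exp_p(ry))-f(p)-r\,df(p)\cdot y]$ live on the unit ball in $T_p\calM$, and a diagonal extraction produces a sequence weakly convergent to $0$ in $W^{1,p}$. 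Applying quasiconvexity of $U(p,\cdot)$ on $(T_p\calM,\go(p))$ with frozen basepoint, and passing to the limit $r\to 0$, yields the desired pointwise lower bound.

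The hard part is the blow-up argument in the sufficiency direction: one must simultaneously control (i)~the convergence of the rescaled metrics $r^{-2}\exp_p^*\go$ to the flat inner product $\go(p)$, (ii)~the approximate continuity of $df$ at $p$, and (iii)~the Carath\'eodory dependence of $U$ on its base point, the last requiring a careful choice of $p$ as a joint Lebesgue-type point for the integrand with base coordinate and gradient slot decoupled (this is where the measurable selection results of Appendix~\ref{ap:mst} enter implicitly). Once the reduction to the flat tangent space is complete, the conclusion follows from the standard Euclidean argument. The $W^{1,p}_0$ case requires no modification, since in necessity $\vp$ is compactly supported and in sufficiency the blow-up is purely local.
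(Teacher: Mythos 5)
Your overall reduction---localize to a finite atlas, use the coordinate-independence of Definition~\ref{df:QC}, and let the constant $\sqrt{\det\go_{ij}(p)}$ cancel in the volume average---is exactly the strategy of the paper. The divergence is in what happens after localization: the paper simply invokes the Euclidean results of Acerbi and Fusco (\cite{AF84}, Theorem [II.2] for necessity and Theorem [II.4] for sufficiency) chart by chart, handling sufficiency by partitioning $\calM$ up to a null set into finitely many chart-contained open sets and summing the liminfs, whereas you re-prove both Euclidean implications from scratch: necessity via the classical oscillating test sequence, and sufficiency via a Fonseca--M\"uller blow-up with Lebesgue decomposition of the energy measures (a genuinely different method from the truncation/decomposition argument underlying \cite{AF84}). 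Your route is legitimate and more self-contained, but it assumes a technical burden that the sketch does not discharge: in the blow-up you must modify the rescaled maps $g_n^r$ near the boundary of the unit ball (a De Giorgi-type slicing and cut-off) so that the perturbation lies in $W^{1,p}_0$ and the quasiconvexity inequality is actually applicable, and you must justify freezing the base point of a merely Carath\'eodory integrand (Scorza--Dragoni, not the measurable selection results of Appendix~\ref{ap:mst}, which play no role here). You flag these as ``the hard part'' but do not resolve them; since the whole point of the hypotheses is that in coordinates $U$ becomes a classical Carath\'eodory integrand satisfying the growth conditions (II.2) and (II.6) of \cite{AF84}, citing those theorems is the economical path. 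One further caveat applying to your necessity argument (and, strictly, to the paper's as well): shrinking $B$ to $\{p\}$ yields the inequality \eqref{eq:QCdef} only at a.e.\ $p$ (Lebesgue points of the integrand), whereas Definition~\ref{df:QC} demands it at every $p$; this is harmless for the way the theorem is used but should be acknowledged.
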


\begin{theorem}
\label{tm:qc2}
Under the above conditions, the weakly sequential lower-semicontinuous envelope of the functional $I_\calM:W^{1,p}(\calM;\R^m)\to \R$ (as defined in the previous theorem) is $\G I_\calM:W^{1,p}(\calM;\R^m)\to \R$ given by
\[
\G I_\calM(f) := \int_\calM QU(df)\,\,\Volume,
\]
where $QU(q)=\sup\{V(q): V\le U \,\,\text{is quasiconvex}\}$ is the quasiconvex envelope of $U$; moreover, $QU$  is a Carath\'eodory quasiconvex function.
\end{theorem}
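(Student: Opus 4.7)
The argument proceeds via two matching inequalities, following the classical Acerbi--Fusco relaxation scheme adapted to the Riemannian setting. Let $J(f) := \int_\calM QU(df)\,\Volume$. The plan is first to establish that $QU$ is a Carath\'eodory quasiconvex function with the same $p$-growth bounds as $U$; then $J$ is weakly sequentially lower-semicontinuous by Theorem~\ref{tm:qc1}, and $QU \le U$ pointwise gives the easy inequality $J \le \G I_\calM$. The substantive step is then to construct, for every $f \in W^{1,p}(\calM;\R^m)$, a recovery sequence $f_n \rightharpoonup f$ in $W^{1,p}$ with $I_\calM(f_n) \to J(f)$, which yields the matching inequality $\G I_\calM(f) \le J(f)$.

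For the Carath\'eodory and quasiconvex character of $QU$, quasiconvexity in each fiber is automatic from the supremum defining the envelope. For measurability and growth, I would use the dual characterization $QU_p(A) = \inf_\varphi \dashint_{D_p} U_p(A + d\varphi \circ \kappa)\,\omega_p$ with $\varphi \in C_0^\infty(D_p;\R^m)$ for a fixed bounded open $D_p \subset T_p\calM$. Restricting this infimum to a countable dense family---for instance piecewise affine test functions with rational coefficients on a rational triangulation of a fixed reference cube---writes $QU$ as a countable infimum of Carath\'eodory functions, hence Carath\'eodory itself. The bounds $-\beta \le QU \le C(1+|q|^p)$ transfer directly from $U$, and the standard polynomial-growth estimate yields local Lipschitz continuity of $QU$ in the fiber variable.

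For the recovery sequence I would first treat the case of $f$ piecewise affine in local charts, and then extend by density. Cover $\calM$ by finitely many charts and take a fine triangulation subordinate to the cover, with diameters small enough that $\go$ is nearly constant over each simplex; assume $f$ is affine on each simplex $T_i$ with differential $A_i$. By the definition of $QU$, pick $\varphi_{i,n} \in C^\infty_0$ with $\dashint U(x_i, A_i + d\varphi_{i,n})\,\omega_{x_i} \to QU(x_i, A_i)$. A Vitali-type covering of $T_i$ by small balls then lets me paste rescaled copies of $\varphi_{i,n}$ into $T_i$ and obtain a perturbation $\psi_{i,n} \in W^{1,p}_0(T_i;\R^m)$; the assembled sequence $f_n = f + \sum_i \psi_{i,n} \in W^{1,p}(\calM;\R^m)$ satisfies $f_n \rightharpoonup f$, and $\int U(df_n)\,\Volume$ converges to a Riemann sum for $J(f)$ that tends to $J(f)$ as the mesh is refined. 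Finally, I would pass from piecewise affine $f$ to arbitrary $f \in W^{1,p}(\calM;\R^m)$ by density together with an Attouch diagonal argument, using strong $W^{1,p}$-continuity of $J$, which follows from the $p$-growth and Lipschitz property of $QU$.

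The main obstacle is the Riemannian adaptation of the Vitali-scaling step: the fibers of $T^*\calM \otimes \R^m$ twist as the base point moves, so the test functions $\varphi_{i,n}$ defined intrinsically on $T_{x_i}\calM$ must be transported to nearby fibers via parallel transport or local trivializations, and the resulting error coming from the variation of $\go$ must be absorbed in a remainder that vanishes upon mesh refinement. The Carath\'eodory property of $U$ (as developed in Appendix~\ref{ap:mst}) ensures that these errors are controllable after a measurable choice of trivialization, while the growth condition~\eqref{eq:growthqc} provides the uniform $L^p$-estimates needed to close the argument.
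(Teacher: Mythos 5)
Your proposal is correct in outline and shares the paper's skeleton: establish that $QU$ is a Carath\'eodory quasiconvex function with the same $p$-growth bounds, deduce the easy inequality $\int_\calM QU(df)\,\Volume\le \tI_\calM(f)$ from Theorem~\ref{tm:qc1} together with $QU\le U$, and prove the reverse inequality by building recovery sequences on a dense subclass and concluding by strong $W^{1,p}$-continuity of the relaxed functional. The genuine difference is how the Euclidean core is handled. The paper covers $\calM$ by finitely many charts and, in each chart, simply cites Acerbi--Fusco (Statements [III.6] and [III.7] of \cite{AF84}): in coordinates the functional is an ordinary Carath\'eodory integrand of $(x,\nabla u)$, with the metric and volume form absorbed into the $x$-dependence, so the classical relaxation theorem applies verbatim; the only remaining work is patching the chartwise $W^{1,\infty}$ recovery sequences across a finite partition and then passing from $W^{1,\infty}$ to $W^{1,p}$ by density plus the Carath\'eodory continuity theorem (Proposition~\ref{pn:strcont}). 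You instead re-derive that core from scratch: the countable-infimum (Dacorogna-formula) representation of $QU$ for measurability, the local Lipschitz estimate for fiberwise continuity, and the piecewise-affine approximation with Vitali pasting of rescaled test functions for the recovery sequence. That is a legitimate and more self-contained route, but it is also where your write-up is thinnest: the ``fiber twisting'' and parallel-transport errors you flag as the main obstacle disappear entirely if the pasting is performed inside a single coordinate chart, where the problem literally is the Euclidean one with an $x$-dependent integrand; doing the construction intrinsically forces you to quantify transport errors that the chart-based argument never encounters. In short, the paper buys brevity through an external citation, while your version buys self-containedness at the cost of extra, and as written only sketched, Riemannian bookkeeping; neither route has a logical gap.
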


We now show that Theorem~\ref{tm:qc2} applies to $W_0$:

\begin{lemma}
\label{lm:w0}
$W_0$ is continuous (and in particular Carath\'eodory) and satisfies the same growth and coercivity conditions as $W$ (possibly with different constants).
\end{lemma}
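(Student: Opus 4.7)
The plan is to handle the three properties of $W_0$ separately. The growth and coercivity bounds reduce to straightforward applications of the hypotheses on $W$; continuity is slightly more delicate because the minimum involves taking an infimum over fibers of a vector bundle whose base point varies.

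For the growth bound, I would observe that choosing $r = 0$ in the definition of $W_0$ yields $W_0(q) \le W|_\S(q\oplus 0)$; the growth condition on $W$ then gives $W_0(q) \le C(1+|q|^p)$ immediately, since $|q\oplus 0| = |q|$. For coercivity, first note that for each fixed $q \in (T^*\S\otimes\R^n)_x$ the map $r\mapsto W|_\S(q\oplus r)$ is continuous on the finite-dimensional fiber $(\NS^*\otimes\R^n)_x$ and, by the coercivity of $W$, satisfies $W|_\S(q\oplus r) \ge \alpha(|q|^2+|r|^2)^{p/2}-\beta$, which diverges as $|r|\to\infty$. Hence the minimum is attained at some $r_q$, which justifies the claim that $W_0$ is well-defined as a minimum rather than an infimum. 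Then
\[
W_0(q) = W|_\S(q\oplus r_q) \ge \alpha(|q|^2+|r_q|^2)^{p/2} - \beta \ge \alpha|q|^p - \beta.
\]

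For continuity, I would argue locally. Fix $q_0 \in T^*\S\otimes\R^n$ with base point $x_0 = \pi(q_0)$, and choose a coordinate neighborhood of $x_0$ over which both $T^*\S\otimes\R^n$ and $\NS^*\otimes\R^n$ trivialize. In this trivialization $W|_\S$ becomes a continuous function of $(x,q,r)$. Let $q_n \to q_0$, and let $r_n$ be a minimizer of $r\mapsto W|_\S(q_n\oplus r)$. By the growth condition taking $r=0$, $W_0(q_n) \le C(1+|q_n|^p)$, while by coercivity
\[
\alpha(|q_n|^2+|r_n|^2)^{p/2} - \beta \le W|_\S(q_n\oplus r_n) = W_0(q_n) \le C(1+|q_n|^p),
\]
so the sequence $\{r_n\}$ is bounded in Euclidean norm in the trivialization. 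Extract a convergent subsequence $r_{n_k} \to r^*$. By continuity of $W$,
\[
\liminf_{n\to\infty} W_0(q_n) = \lim_{k\to\infty} W|_\S(q_{n_k}\oplus r_{n_k}) = W|_\S(q_0\oplus r^*) \ge W_0(q_0).
\]
For the reverse inequality, let $r_0$ minimize $r\mapsto W|_\S(q_0\oplus r)$. Then $W_0(q_n) \le W|_\S(q_n\oplus r_0)$, and continuity of $W$ yields $\limsup W_0(q_n) \le W|_\S(q_0\oplus r_0) = W_0(q_0)$, completing the proof of continuity.

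The main conceptual point to get right is the bound on $|r_n|$: without it, one could not extract a convergent minimizing sequence, and the liminf inequality would fail. Fortunately the coercivity hypothesis on $W$ is exactly what makes this uniform in $n$. The fact that the Lipschitz hypothesis on $W$ is only stated fiber-wise is not an obstacle here, since pointwise continuity of $W$ on the total space (which is explicitly assumed) suffices for the continuity argument once trivializations are chosen.
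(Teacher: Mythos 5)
Your proof is correct and follows essentially the same route as the paper's: growth via $r=0$, coercivity via orthogonality of the splitting $T^*\S\oplus\NS^*$, and continuity by showing that the fiberwise minimizers $r_n$ stay bounded (coercivity plus growth), extracting a convergent subsequence for the lower bound, and testing against the minimizer at the limit point (via a local trivialization in your version, via a section $\rho$ in the paper's) for the upper bound. The one slip is the first equality in your liminf display: you must first pass to a subsequence along which $W_0(q_n)$ converges to its liminf and only then extract the convergent sub-subsequence of the $r_n$'s (as the paper does), since a subsequence chosen merely so that $r_{n_k}$ converges need not realize the liminf --- a one-line repair.
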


\begin{proof}
The proof of the growth and coercivity condition is the same as in Proposition 1 in \cite{LR95}.

To prove the continuity, we prove that $W_0$ is both lower- and upper-semicontinuous.
Let $q,q_i\in T^*\S\otimes\R^n$ such that $q_i\to q$, and let $r_i \in \NS_{\pi(q_i)}^*\otimes\R^n$ such that 
$W_0(q_i)=W|_\S(q_i\oplus r_i)$.
Let $q_i$ be a subsequence (not relabeled) such that $W_0(q_i)$ converges. The growth property of $W_0$ and the coercivity property of $W$ imply that
\[
\alpha |r_i|^p-\beta \le W|_\S(q_i\oplus r_i) = W_0(q_i) \le C(1+|q_i|^p),
\]
and since $|q_i|$ is a bounded sequence, so is $|r_i|$.
Since $r_i\in (\NS^*\otimes\R^n)_{\pi(q_i)}$ and $\pi(q_i)\to \pi(q)$, there is a convergent subsequence $r_{i_k}\to r\in (\NS^*\otimes\R^n)_{\pi(q)}$.
Therefore,
\[
\lim_{i\to\infty} W_0(q_i) = \lim_{k\to \infty} W|_\S(q_{i_k}\oplus r_{i_k}) \to W|_\S(q\oplus r) \ge W_0(q).
\]
Since this holds for every convergent subsequence of the original sequence $W_0(q_i)$, $W_0$ is lower-semicontinuous.

To prove upper-semicontinuity, let $q,q_i\in T^*\S\otimes\R^n$ such that $q_i\to q$,  let $r \in \NS_{\pi(q)}^*\otimes\R^n$ such that $W_0(q)=W|_\S(q\oplus r)$, and let $\rho$ be a section of $\NS^*\otimes\R^n$ such that $\rho(\pi(q))=r$. Then, by the continuity of $W$,
\[
W_0(q) = W|_\S(q\oplus r) = \lim_{i\to\infty} W|_\S(q_i\oplus \rho(\pi(q_i))) \ge \limsup_{i\to\infty} W_0(q_i),
\]
hence $W_0$ is upper-semicontinuous.
\end{proof}

It follows that Theorems~\ref{tm:qc1} applies to $QW_0$:

\begin{corollary}
\label{cy:qw0}
$QW_0$ is a Carath\'eodory function and satisfies \eqref{eq:growthqc}.
\end{corollary}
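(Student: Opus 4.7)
The plan is simply to apply Theorem~\ref{tm:qc2} to $W_0$ viewed as a function on the Riemannian manifold $(\S,\go|_\S)$, and then to verify the two-sided growth bound \eqref{eq:growthqc} for the envelope $QW_0$ directly from the properties of $W_0$ established in Lemma~\ref{lm:w0}.

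First I would check that the hypotheses of Theorem~\ref{tm:qc2} are met on $(\S,\go|_\S)$. Since $\S$ is compact with Lipschitz boundary, it has finite volume and admits a finite atlas. By Lemma~\ref{lm:w0}, $W_0$ is continuous (in particular Carath\'eodory) and satisfies $|W_0(q)|\le C(1+|q|^p)$ together with the coercivity bound $W_0(q)\ge \alpha|q|^p-\beta$. The coercivity implies $W_0(q)\ge -\beta$, so combined with the upper bound, $W_0$ satisfies the hypothesis \eqref{eq:growthqc} required by Theorem~\ref{tm:qc2}. Theorem~\ref{tm:qc2} then directly yields that $QW_0$ is a Carath\'eodory (quasiconvex) function, which handles the first assertion.

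It remains to verify the two-sided growth bound for $QW_0$. The upper bound is immediate from the definition of the quasiconvex envelope as the supremum of quasiconvex minorants: since any such minorant is pointwise bounded above by $W_0$, we have $QW_0\le W_0\le C(1+|q|^p)$. For the lower bound, I would exhibit an explicit quasiconvex minorant. The constant function $V\equiv -\beta$ is trivially quasiconvex, since Definition~\ref{df:QC} is satisfied as an equality when $U$ is constant (the average of a constant is that constant, and $d\vp$ has zero mean for $\vp\in C_0^\infty$). By coercivity, $-\beta\le \alpha|q|^p-\beta\le W_0(q)$, so $V\le W_0$ everywhere, and hence $QW_0(q)\ge -\beta$ by the supremum definition.

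No substantive obstacle is anticipated: the entire statement is a routine consequence of Lemma~\ref{lm:w0} and Theorem~\ref{tm:qc2}, with the only mildly non-trivial observation being that a constant furnishes a quasiconvex lower envelope thanks to the coercivity of $W_0$.
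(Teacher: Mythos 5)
Your proof is correct and follows essentially the same route as the paper: the upper bound comes from $QW_0\le W_0$ together with Lemma~\ref{lm:w0}, the lower bound from observing that the constant $-\beta$ is a quasiconvex minorant of $W_0$, and the Carath\'eodory property from applying Theorem~\ref{tm:qc2} to $W_0$. Your version merely spells out the verification of the hypotheses of Theorem~\ref{tm:qc2} a bit more explicitly than the paper does.
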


\begin{proof}
From Lemma~\ref{lm:w0}, $QW_0(q)\le W_0(q) \le C(1+|q|^p)$ for some $C>0$. Also, observe that the constant function $-\beta$ is a quasiconvex function not larger than $W_0$, hence $QW_0\ge -\beta$.
 Lemma~\ref{lm:w0} also implies that Theorem~\ref{tm:qc2}  applies to $W_0$, and therefore $QW_0$ is a Carath\'eodory function.
\end{proof}

\section{Proof of the main results}

We restate our main Theorem:

\begin{quote}
The sequence of functionals $(I_h)_{h\le h_0}$ define by \eqref{eq:Ih} $\Gamma$-converges to  $I:L^p(\S;\R^n)\to\bar{\R}$ defined by:
\[
I(F) = \Cases{\dashint_\S QW_0(dF)\,\VolumeS &  F\in W_{bc}^{1,p}(\S;\R^n), \\
\infty & \text{otherwise}.}
\]
\end{quote}

To prove this theorem, we
use Corollary \ref{cy:gc}. That is, we prove that $I$ is the limit of every $\G$-converging subsequence of $I_h$. Explicitly,  we assume that $I_h$ is a (not relabeled) $\G$-convergent subsequence with limit $I_0: L^p(\S;\R^n)\to \bar{\R}$, and show that $I_0 = I$. 
The proof is long hence we divide it into several steps: (1) if $F\notin W_{bc}^{1,p}(\S;\R^n)$ then $I_0(F) = I(F)$; (2)  if $F\in W_{bc}^{1,p}(\S;\R^n)$ then $I_0(F) \ge I(F)$; and (3)  if $F\in W_{bc}^{1,p}(\S;\R^n)$ then $I_0(F) \le I(F)$. 
With a slight abuse of notation, we write $\W_h$ instead of $\W_{h_0h}$ whenever rescaling arguments are concerned, as it does not cause confusion and makes the proof more readable.

\paragraph{Step 1: $I_0(F) = I(F)$ when $F$ does not satisfy either the regularity or the boundary conditions}

\begin{proposition}
\label{pr:step1}
If $F\in L^p(\S;\R^n)\setminus W^{1,p}_{bc}(\S;\R^n)$ then $I_0(F)=\infty = I(F)$.
\end{proposition}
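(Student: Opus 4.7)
The plan is to prove the contrapositive of the nontrivial half: if $\liminf_{h\to 0} I_h(f_h) < \infty$ for some sequence $f_h \to F$ in the strong $L^p$ topology, then $F$ must lie in $W^{1,p}_{bc}(\S;\R^n)$. Since $I(F) = \infty$ by definition when $F \notin W^{1,p}_{bc}(\S;\R^n)$, the equality $I_0(F) = I(F) = \infty$ will follow from the lower-semicontinuity part of $\Gamma$-convergence (combined with the existence of a recovery sequence giving the matching upper bound, but we need only the lower bound here since $I(F) = \infty$ is automatic).

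First I would pass to a subsequence along which $I_h(f_h) \le C < \infty$. Since $I_h$ is finite only on $W^{1,p}_{bc}(\W_h;\R^n)$, each $f_h$ lies in that space, and the coercivity condition on $W$ gives a uniform bound
\[
\alpha \dashint_{\W_h} |df_h|^p\,\Volume \le I_h(f_h) + \beta \le C'.
\]
Combined with the uniform $L^p$ bound on $f_h$ (coming from strong $L^p$ convergence to $F$), this shows that $(f_h)$ is uniformly bounded in the volume-averaged $W^{1,p}(\W_{h};\R^n)$ norm. Lemma~\ref{lm:wk w1p conv} then produces a subsequence and an $\widetilde F \in W^{1,p}(\S;\R^n)$ with $f_{h_n} \circ \mu_{h_n} \weakly{} \widetilde F \circ \pi$ in $W^{1,p}(\W_{h_0};\R^n)$ and $f_{h_n} \to \widetilde F$ strongly in $L^p$. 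By uniqueness of $L^p$ limits, $F = \widetilde F \in W^{1,p}(\S;\R^n)$.

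The main step is then to verify the boundary condition $F|_{\partial \S} = F_{bc}$. On $\Gamma_h = \pi^{-1}(\partial \S) \cap \W_h$, we have by assumption
\[
f_h(\eta) = F_{bc}(\pi(\eta)) + (\qperp_{bc})_{\pi(\eta)}(\eta).
\]
Since $\pi \circ \mu_h = \pi$, rescaling gives, for $\xi \in \Gamma_{h_0}$,
\[
(f_h \circ \mu_h)(\xi) = F_{bc}(\pi(\xi)) + h\,(\qperp_{bc})_{\pi(\xi)}(\xi),
\]
using linearity of $\qperp_{bc}$ in the fiber. The second term tends to $0$ strongly in $L^p(\Gamma_{h_0};\R^n)$ as $h \to 0$ (uniformly bounded and vanishing pointwise). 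Continuity of the trace operator $W^{1,p}(\W_{h_0};\R^n) \to L^p(\Gamma_{h_0};\R^n)$ applied to the weak convergence $f_{h_n} \circ \mu_{h_n} \weakly{} F \circ \pi$ gives weak $L^p(\Gamma_{h_0})$-convergence of the traces, and comparing limits yields $F \circ \pi|_{\Gamma_{h_0}} = F_{bc} \circ \pi$, hence $F|_{\partial \S} = F_{bc}$.

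The hard part is the boundary-condition passage: one must be careful that rescaling by $\mu_h$ converts the boundary data on $\Gamma_h$ into data on $\Gamma_{h_0}$, and that the affine (normal) correction involving $\qperp_{bc}$ really contributes an $O(h)$ term that dies in the limit. Everything else is a direct assembly of the coercivity bound, Lemma~\ref{lm:wk w1p conv}, and standard trace continuity; the conclusion is that any sequence with $\liminf I_h(f_h) < \infty$ forces the $L^p$ limit into $W^{1,p}_{bc}(\S;\R^n)$, so $F \notin W^{1,p}_{bc}(\S;\R^n)$ forces $I_0(F) = \infty = I(F)$.
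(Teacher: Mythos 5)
Your proposal is correct and follows essentially the same route as the paper: uniform energy bounds via coercivity, the compactness Lemma~\ref{lm:wk w1p conv} to identify $F\in W^{1,p}(\S;\R^n)$, and the rescaled boundary data $F_{bc}\circ\pi + h\,\pi^*\qperp_{bc}\circ\lambda$ on $\G_{h_0}$ combined with (weak) continuity of the trace operator to recover $F|_{\partial\S}=F_{bc}$. The only difference is presentational — you phrase it as a contrapositive for arbitrary sequences with finite $\liminf$ energy, while the paper works directly with a recovery sequence for $F$ with $I_0(F)<\infty$ — and your statement of the trace step is, if anything, slightly more careful than the paper's.
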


\begin{proof}
Let $F\in L^p(\S;\R^n)$ be such that $I_0(F)<\infty$; we will show that $F\in W^{1,p}_{bc}(\S;\R^n)$. 
Let $f_h\to F$ be a recovery sequence, namely,
\[
I_h(f_h) \to I_0(F)<\infty.
\]
We can assume that $I_h(f_h)$ is uniformly bounded by some constant $C<\infty$ for sufficiently small  $h$, hence $f_h\in W^{1,p}_{bc}(\W_h;\R^n)$.
Since $f_h\to F$ in $L^p$, it follows that $\|f_h\|_{L^p}$ is bounded uniformly in $h$.
From the coercivity of $W$, we have
\[
C \ge I_h(f_h) = \dashint_{\W_h} W(df_h)\,\Volume \ge \alpha \dashint_{\W_h} |df_h|^p \,\Volume - \beta,
\]
hence $\|df_h\|_{L^p}$ is also uniformly bounded, hence $f_h$ is uniformly bounded in $W^{1,p}_{bc}(\W_h;\R^n)$.
By Lemma \ref{lm:wk w1p conv}, there is a (not relabeled) subsequence such that $f_h\circ \mu_h \weakly{} F\circ\pi$ in $W^{1,p}(\W_{h_0};\R^n)$. In particular, $F\circ\pi \in W^{1,p}(\W_{h_0};\R^n)$, hence $F\in W^{1,p}(\S;\R^n)$.

It remains to show that $F|_{\partial\S} = F_{bc}$. 
Indeed, since $f_h\in W^{1,p}_{bc}(\W_h;\R^n)$, it is immediate that $f_h\circ \mu_h|_{\G_{h_0}} = F_{bc}\circ\pi + h \pi^*\qperp_{bc}\circ\lambda$. Therefore $f_h\circ \mu_h|_{\G_{h_0}} \to F_{bc}\circ\pi$ uniformly in $\G_{h_0}$. By the continuity of the trace operator as a mapping $W^{1,p}(\W_{h_0};\R^n) \to L^p(\G_{h_0};\R^n)$, we have that $F\circ\pi|_{\G_{h_0}} = F_{bc}\circ\pi$, hence $F|_{\partial\S} = F_{bc}$.
\end{proof}

\paragraph{Step 2: $I_0(F) \ge I(F)$ when $F$ satisfies both the regularity and the boundary conditions}

\begin{proposition}
\label{pn:I_0>I}
$I_0(F)\ge I(F)$ for every $F\in W^{1,p}_{bc}(\S;\R^n)$.
\end{proposition}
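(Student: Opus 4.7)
The plan is to fix a recovery sequence for $F$, rescale to the fixed tubular neighborhood $\W_{h_0}$, use \lemref{lm:ww0} to replace $W$ by $QW_0$ pointwise, and conclude via weak lower semicontinuity of a suitable quasiconvex functional on $\W_{h_0}$.

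Assuming $I_0(F)<\infty$ (otherwise the inequality is trivial), pick a recovery sequence $(f_h)$ with $f_h\to F$ in $L^p$ and $I_h(f_h)\to I_0(F)$. Coercivity of $W$ bounds $(f_h)$ uniformly in $W^{1,p}_{bc}(\W_h;\R^n)$, so \lemref{lm:wk w1p conv} supplies a subsequence along which $g_h := f_h\circ\mu_h$ satisfies $g_h\weakly{} F\circ\pi$ in $W^{1,p}(\W_{h_0};\R^n)$. Rewrite $I_h(f_h)$ as an integral on $\W_{h_0}$ using \lemref{lm:resc}(3), replace $W(df_h)$ by $\pi^*W|_\S\circ(\sigma\oplus\iota)^*(df_h)$ with $O(h)(1+|df_h|^p)$ error by \corrref{cy:w sigma iota} (controlled by the uniform bound on $\int_{\W_{h_0}}|df_h|^p\circ\mu_h\,\eta\wedge\omega$ from the proof of \lemref{lm:wk w1p conv}), and apply \lemref{lm:ww0} together with the identity $(\sigma\oplus\iota)\circ\ipar=\sigma$, the chain-rule identity $df_h(h\xi)\circ\sigma_{h\xi}=dg_h(\xi)\circ\sigma_\xi$ coming from \lemref{lm:resc}(1), and $QW_0\le W_0$, to obtain
\[
I_h(f_h) \ge \frac{1+O(h)}{\nu_k h_0^k |\S|} \int_{\W_{h_0}} QW_0(dg_h\circ\sigma)\,\eta\wedge\omega \,-\, O(h).
\]

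The heart of the argument is then a weak lower semicontinuity step on the fixed space $W^{1,p}(\W_{h_0};\R^n)$. Define $U\colon T^*\W_{h_0}\otimes\R^n\to\R$ by $U_\xi(q)=QW_0(q\circ\sigma_\xi)$; by \corrref{cy:qw0}, $U$ is Carath\'eodory and satisfies the growth \eqref{eq:growthqc}. The main technical point, which I expect to be the principal obstacle, is the quasiconvexity of $U$. Using the bundle isomorphism $\sigma\oplus\iota$ to pick a local frame at each $\xi\in\W_{h_0}$ in which $\sigma_\xi$ is the inclusion of the first $m$ tangential coordinates of $T_\xi\W_{h_0}$, the density $U_\xi(q)$ depends only on the $m$ tangential columns of $q$; slicing an arbitrary $\vp\in C_0^\infty(D;\R^n)$ along the $k$ normal directions by Fubini and applying the quasiconvexity inequality for $QW_0$ on each tangential slice then yields the inequality in \defref{df:QC} for $U_\xi$. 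Once $U$ is known to be quasiconvex, \thmref{tm:qc1} produces
\[
\liminf_{h\to 0}\int_{\W_{h_0}} QW_0(dg_h\circ\sigma)\,\eta\wedge\omega \ge \int_{\W_{h_0}} QW_0(d(F\circ\pi)\circ\sigma)\,\eta\wedge\omega.
\]

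To conclude, $d\pi\circ\sigma=\id$ yields $d(F\circ\pi)\circ\sigma = dF\circ\pi$, so the right-hand integrand is constant along fibers of $\pi$. Combined with $\eta=\pi^\star\VolumeS$ and $\pi_\star\omega=\nu_k h_0^k$ from \lemref{lm:propeo}, a Fubini computation on the fibration $\pi\colon\W_{h_0}\to\S$ evaluates the right-hand side to $\nu_k h_0^k\int_\S QW_0(dF)\,\VolumeS$. Dividing by $\nu_k h_0^k|\S|$ and letting $h\to 0$ gives $I_0(F)\ge\dashint_\S QW_0(dF)\,\VolumeS = I(F)$, as required.
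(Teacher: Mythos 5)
Your proposal is correct and follows essentially the same route as the paper's proof: recovery sequence, coercivity and rescaling to $\W_{h_0}$, replacement of $W$ by $\pi^*W|_\S\circ(\sigma\oplus\iota)^*$ up to $O(h)$, reduction to $\pi^*QW_0\circ\sigma^*$, and weak lower semicontinuity via quasiconvexity of that density followed by the Fubini/push-forward evaluation of the limit. The step you single out as the principal obstacle—quasiconvexity of $U_\xi(q)=QW_0(q\circ\sigma_\xi)$ via slicing test functions along the normal directions—is exactly the content of the paper's Lemma~\ref{lm:qw0sigma qc}, proved there by the same fiberwise slicing argument you sketch.
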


\begin{proof}
Let $F\in W^{1,p}_{bc}(\S;\R^n)$ be given. Let $f_h\to F$ be a recovery sequence, namely,
\[
I_0(F) = \lim_{h\to0} I_h(f_h).
\]
If $I_0(F) = \infty$ then the claim is trivial. Otherwise, $I_h(f_h)$ is bounded for sufficiently small $h$, and therefore
$f_h \in W^{1,p}_{bc}(\W_h;\R^n)$ and 
\[
I_h(f_h) = \dashint_{\W_h} W(df_h)\,\Volume.
\]

The coercivity of $W$ implies that $df_h$ is uniformly bounded in $L^p$, hence by {the} Poincar\'e inequality, $f_h$ is uniformly bounded in $W^{1,p}(\W_h;\R^n)$ (note that we need here a version of the Poincar\'e inequality in which the function is prescribed on a subset of the boundary that has positive $(n-1)$-dimensional measure; see Theorem 6.1-8 in \cite{Cia88} for the Euclidean case; the non-Euclidean case is analogous). 

By Lemma~\ref{lm:wk w1p conv}, Lemma~\ref{lm:str lp conv} and the uniqueness of the limit, 
\[
f_h\circ\mu_h\weakly{}F\circ \pi \quad \text{in \,\,$W^{1,p}(\W_{h_0};\R^n)$}.
\]
By Corollary \ref{cy:w sigma iota}, Lemma \ref{lm:ww0}, and the definition of $QW_0$:
\[
\begin{split}
I_h(f_h)  &=  \dashint_{\W_h} W(df_h)\,\Volume \\
&=\dashint_{\W_{h}} \pi^* \left.W\right|_\S \circ (\sigma\oplus\iota)^* (df_h) \,\Volume + O(h) \\
&\ge \dashint_{\W_{h}}  \pi^* W_0 \circ  \pi^*\ipar^* \circ (\sigma\oplus\iota)^*  (df_h) \,\Volume + O(h) \\
&\ge \dashint_{\W_{h}}  \pi^* QW_0 \circ  \pi^*\ipar^* \circ (\sigma\oplus\iota)^*  (df_h) \,\Volume + O(h) \\
&= \dashint_{\W_{h}}  \pi^* QW_0 \circ  \sigma^*  (df_h) \,\Volume + O(h).
\end{split}
\]

The growth condition of $QW_0$ implies that $\pi^* QW_0 \circ  \sigma^*  (df_h) \in L^1(\W_h)$, hence from  the third part of Lemma \ref{lm:resc} we have that 
\beq
I_h(f_h) \ge
\frac{1+O(h)}{\nu_k h_0^k |\S|}\int_{\W_{h_0}} ( \pi^* QW_0 \circ  \sigma^*  (df_h)\circ\mu_h)\, \eta\wedge\omega + O(h).
\label{eq:aaa}
\eeq

We next show that
\beq
\pi^* QW_0 \circ  \sigma^* (df_h)\circ\mu_h = \pi^* QW_0 \circ  \sigma^* (d(f_h\circ\mu_h)).
\label{eq:bbb}
\eeq
Indeed, on the one hand,  for $\xi\in\W$:
\[
\begin{split}
\pi^* QW_0 \circ  \sigma^* (df_h)\circ\mu_h (\xi) &= \pi^* QW_0 (df_h\circ \sigma)(h\xi) \\
&= (QW_0)_{\pi(h\xi)} (d_{h\xi}f_h\circ \sigma_{h\xi}),
\end{split}
\]
whereas from the first part of Lemma \ref{lm:resc}.
\[
\begin{split}
\pi^* QW_0 \circ  \sigma^* (d(f_h\circ\mu_h)) (\xi) & = (QW_0)_{\pi(\xi)} (d_\xi(f_h\circ\mu_h)\circ \sigma_\xi) \\
&=  (QW_0)_{\pi(\xi)} (d_{h\xi} f_h\circ d_\xi \mu_h\circ \sigma_\xi) \\
&=  (QW_0)_{\pi(\xi)} (d_{h\xi} f_h\circ \sigma_{h\xi}).
\end{split}
\]

Substituting \eqref{eq:bbb} into \eqref{eq:aaa}:
\[
I_h(f_h) \ge
\frac{1+O(h)}{\nu_k h_0^k |\S|}\int_{\W_{h_0}}  \pi^* QW_0 \circ  \sigma^* (d(f_h\circ\mu_h))\, \eta\wedge\omega + O(h).
\]

Since $I_0(F) = \lim_{h\to0} I_h(f_h)$, it follows that
\begin{equation}
\label{eq:liminf QW_0}
I_0(F) \ge \liminf_{h\to0}J(f_h\circ \mu_h),
\eeq
where $J: W^{1,p}(\W_{h_0};\R^n) \to \R$ is defined by 
\[
J(f) :=  \frac{1}{\nu_k h_0^k |\S|}\int_{\W_{h_0}}  \pi^* QW_0 \circ  \sigma^* (df)\, \eta\wedge\omega.
\]

Lemma \ref{lm:qw0sigma qc} below shows that the quasiconvexity of $QW_0$ implies the quasiconvexity of $\pi^* QW_0 \circ \sigma^*: T^*\W_{h_0}\otimes\R^n \to \R$. 
By Corollary~\ref{cy:qw0}, $QW_0$ is a Carath\'eodory that satisfies \eqref{eq:growthqc}, and therefore the same holds for $\pi^* QW_0 \circ \sigma^*$, hence, by Theorem~\ref{tm:qc1}, $J$ is sequentially weak lower-semicontinuous in $W^{1,p}(\W_{h_0};\R^n)$.
Since $f_h\circ\mu_h\weakly{}F\circ \pi$ in $W^{1,p}(\W_{h_0};\R^n)$, it follows that 
\[
I_0(F) \ge J(F\circ\pi).
\]
It remain to calculate $J(F\circ\pi)$, 
\[
\begin{split}
J(F\circ\pi) &= \frac{1}{\nu_k h_0^k |\S|} \int_{\W_{h_0}} \pi^* QW_0 \circ  \sigma^* (d(F\circ \pi)) \, \eta\wedge\omega =\\
& = \frac{1}{\nu_k h_0^k |\S|} \int_{\W_{h_0}} \pi^* QW_0(dF) \, \eta\wedge\omega = \\
& = \frac{1}{\nu_k h_0^k |\S|} \int_{\W_{h_0}} \pi^\star (QW_0(dF) \VolumeS)\wedge\omega = \\
& = \frac{1}{\nu_k h_0^k |\S|} \int_\S \pi_\star (\pi^\star (QW_0(dF) \VolumeS)\wedge\omega) = \\
& = \dashint QW_0(dF) \VolumeS = I(F).
\end{split}
\]
where in the passage from the first to the second line we used the fact that  $d\pi \circ \sigma = Id_{\pi^*T\S}$; in the passage from the second to the third line we used the identity $(\pi^* QW_0)(dF)=\pi^*(QW_0(dF))$, Lemma \ref{lm:propeo}  and Fubini's theorem; in the passage form the third to the fourth line we used Fubini's theorem along with the definition of the push-forward operator $\pi_\star$; in the passage from the fourth to the fifth line we used the property of {the} push-forward operator,
\[
\pi_\star(\pi^\star\alpha\wedge\beta) = \alpha\wedge\pi_\star(\beta),
\]
and Lemma~\ref{lm:propeo}. This concludes the proof.
\end{proof}

The following lemma, which is used in the proof {of proposition \ref{pn:I_0>I}}, is a generalization of a property proved in \cite{LR95} (part of the proof of Prop.~6). It states that a function that is quasiconvex on $\S$ can be extended to a quasiconvex function on $\W_h$, via a combination of pullback and projection.

\begin{lemma}
\label{lm:qw0sigma qc}
Let $U: T^*\S\otimes\R^n\to\R$ be quasiconvex. Then,
$\pi^* U \circ \sigma^*: T^*\W_{h}\otimes\R^n \to \R$ is quasiconvex.
\end{lemma}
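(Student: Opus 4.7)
The plan is to verify the definition of quasiconvexity of $\pi^*U\circ\sigma^*$ at each $\xi\in\W_h$ by pulling back the test setup across the linear isomorphism $\sigma_\xi\oplus\iota_\xi: T_{\pi(\xi)}\S\oplus\NS_{\pi(\xi)} \to T_\xi\W_h$, then using Fubini to reduce to the quasiconvexity of $U$ on tangential slices. The key observation is that $\sigma_\xi$ annihilates the normal component, so only the tangential partial derivative of a test function enters $\pi^*U\circ\sigma^*$, and the normal variable becomes a free parameter over which one averages after the fact.

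Fix $\xi\in\W_h$, set $p=\pi(\xi)$, and let $A\in T_\xi^*\W_h\otimes\R^n$, $D_\xi\subset T_\xi\W_h$ open and bounded, and $\phi\in C_0^\infty(D_\xi;\R^n)$. Define $\widetilde D=(\sigma_\xi\oplus\iota_\xi)^{-1}(D_\xi)\subset T_p\S\oplus\NS_p$ and $\tilde\phi(v,w)=\phi(\sigma_\xi v+\iota_\xi w)$, so that $\tilde\phi\in C_0^\infty(\widetilde D;\R^n)$. A short computation from the definition of $\kappa$ shows that at $y=\sigma_\xi v+\iota_\xi w$,
\[
(d_y\phi\circ\kappa_y)\circ\sigma_\xi = D_v\tilde\phi(v,w)\in T_p^*\S\otimes\R^n,
\]
where $D_v$ denotes the partial differential in the tangential variable (obtained by differentiating $t\mapsto\tilde\phi(v+tv',w)$ at $t=0$). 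Since $\sigma_\xi\oplus\iota_\xi$ is linear, its Jacobian with respect to the Lebesgue-type measures $\omega_\xi$ and $dv\,dw$ is a positive constant, which cancels in the normalized average. Hence the quasiconvexity inequality for $\pi^*U\circ\sigma^*$ at $\xi$ reduces to
\[
U_p(A\circ\sigma_\xi)\le\dashint_{\widetilde D}U_p\bigl(A\circ\sigma_\xi+D_v\tilde\phi(v,w)\bigr)\,dv\,dw.
\]

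Next, I would apply Fubini's theorem to integrate first over the tangential slice $\widetilde D^w=\{v:(v,w)\in\widetilde D\}$ for each fixed $w\in\NS_p$. When $\widetilde D^w$ is nonempty, it is open and bounded in $T_p\S$, and $\tilde\phi(\cdot,w)\in C_0^\infty(\widetilde D^w;\R^n)$. Applying the quasiconvexity of $U$ at $p$ to the datum $(A\circ\sigma_\xi,\widetilde D^w,\tilde\phi(\cdot,w))$ yields the slicewise inequality
\[
U_p(A\circ\sigma_\xi)\,|\widetilde D^w|\le\int_{\widetilde D^w}U_p\bigl(A\circ\sigma_\xi+D_v\tilde\phi(v,w)\bigr)\,dv,
\]
which is trivial on empty slices. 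Integrating over $w$ and dividing by $|\widetilde D|=\int|\widetilde D^w|\,dw$ gives exactly the required inequality, and since $(\pi^*U\circ\sigma^*)(A)=U_p(A\circ\sigma_\xi)$, this completes the verification at $\xi$.

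The substantive content is packaged into the one-line slicewise application of the quasiconvexity of $U$; the main obstacle is bookkeeping. One must carefully track the canonical identifications $\kappa$ and $\sigma_\xi\oplus\iota_\xi$, the transition between covectors on $T_\xi\W_h$ and covectors on $T_p\S$ effected by composition with $\sigma_\xi$, and confirm that the constant linear-change-of-variables Jacobian genuinely cancels in the normalized averages used to define quasiconvexity.
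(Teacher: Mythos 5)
Your proof is correct and follows essentially the same route as the paper's: split $T_\xi\W_h$ via $\sigma_\xi\oplus\iota_\xi$, observe that composing with $\sigma_\xi$ picks out only the tangential partial differential of the test function, and apply the quasiconvexity of $U$ slicewise in the normal variable via Fubini. The only difference is presentational: the paper phrases the slicing in the language of forms and push-forwards and restricts to product domains $D^\parallel\times D^\perp$ (implicitly relying on domain-independence of quasiconvexity), whereas you slice an arbitrary test domain directly.
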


\begin{proof}
Fix $h$,  $\xi\in\W_h$ and $A\in T_\xi^*\W_h\otimes\R^n$ and let $D_\xi\subset T_\xi\W_h$ be some bounded set (which we will choose later). 
We {need to} prove that for every $\vp\in C_0^\infty(D_\xi;\R^n)$,
\[
\left(\pi^*U\circ \sigma^*\right)_\xi (A)\le \dashint_{D_\xi} \left(\pi^*U\circ \sigma^*\right)_\xi (A+d\vp\circ \kappa)\omega_\xi
\]
where $\kappa$ and $\omega_\xi$ are as in Definition \ref{df:QC}.

Denote 
\[
\begin{split}
T_\xi\W_h^\parallel & := \im \sigma_\xi = \sigma_\xi (T_{\pi(\xi)}\S),\\
T_\xi\W_h^\perp & := \im \iota_\xi = \iota_\xi (\NS_{\pi(\xi)}).
\end{split}
\]
Obviously, $T_\xi\W_h=T_\xi\W_h^\parallel \oplus T_\xi\W_h^\perp$, and denote by $\Pperp:T_\xi\W_h\to T_\xi\W_h^\perp$ and $\Ppar:T_\xi\W_h\to T_\xi\W_h^\parallel$ the projections.

Observe that $\omega_\xi=\opar \wedge\operp$, where $\opar$ and $\operp$ are respectively, $(n-k)$ and $k$ forms on $T_\xi\W_h^\parallel$ and $T_\xi\W_h^\perp$ (to simplify the notation, we identify  $\opar$ with $\Ppar^\star\opar$ and  $\operp$ with $\Pperp^\star\operp$).
Indeed, we can choose {local} coordinates on $\W_h$ {at $\xi$} such that the induced coordinates on $T_\xi\W_h$ are $(x_1,\ldots,x_{n-k},z_1,\ldots,z_k)$, where $(x_1,\ldots,x_{n-k})$ and $(z_1,\ldots,z_k)$ are bases for $T_\xi\W_h^\parallel$ and $T_\xi\W_h^\perp$, respectively.
In these  coordinates,
\[
\omega_\xi = \left. \Volume\right|_{\xi}\circ \Lambda^n \kappa^{-1}= 
a \,dx_1\wedge \ldots \wedge dx_{n-k} \wedge dz_1\wedge \ldots \wedge dz_k,
\]
where $a$ is some (constant) number. Now define $\opar := a\, dx_1\wedge \ldots \wedge dx_{n-k}$ and $\operp := dz_1\wedge \ldots \wedge dz_k$.

Choose $D_\xi$ such that $D_\xi = \dpar \times \dperp$, where $\dpar$ and $\dperp$ are bounded subsets of  $T_\xi\W_h^\parallel$ and $T_\xi\W_h^\perp$, respectively.
Restricting $\Pperp$ to $D_\xi$, we have
\beq
\label{eq:qclm}
\begin{split}
\int_{D_\xi} \left(\pi^*U\circ \sigma^*\right)_\xi (A+d\vp\circ \kappa)\omega_\xi &=
	\int_{\dperp} \Pperp_\star (\left(\pi^*U\circ \sigma^*\right)_\xi (A+d\vp\circ \kappa)\opar\wedge\operp)\\
	& = \int_{\dperp} \Pperp_\star (\left(\pi^*U\circ \sigma^*\right)_\xi (A+d\vp\circ \kappa)\opar)\wedge\operp\\
	& = \int_{\dperp} \Pperp_\star (\left(\pi^*U\right)_\xi (A\circ \sigma_\xi+d\vp\circ \kappa\circ \sigma_\xi)\opar)\wedge\operp.
\end{split}
\eeq

We now analyze the integral $\Pperp_\star (\left(\pi^*U\right)_\xi (A\circ \sigma_\xi+d\vp\circ \kappa\circ \sigma_\xi)\opar)$.
Let $(x,z)$ be a point in $D_\xi$, where $x\in \dpar$ and $z\in \dperp$.
Define $\vp_z:\dpar\to \R^n$ by $\vp_z(\cdot) := \vp(\cdot,z)$.
Let $\alpha\in \dpar$. We then have
\[
\begin{split}
d_{(x,z)}\vp\circ \kappa_{(x,z)}(\alpha) &= d_{(x,z)}\vp ([(x,z)+\alpha t]) = \frac{d}{dt}\vp\brk{(x,z)+\alpha t}\\
	&=\frac{d}{dt}\vp_z\brk{x+\alpha t} = d_x\vp_z([x+\alpha t] )= d_x \vp_z \circ \kappa_x (\alpha),
\end{split}
\]
which implies that
\beq
\label{eq:qclm1}
d_{(x,z)}\vp\circ \kappa_{(x,z)}\circ \sigma_\xi = d_{x}\vp_z\circ \kappa_{x}\circ \sigma_\xi,
\eeq
since the image of $\sigma_\xi$ is in $T_\xi\W_h^\parallel$. Since $\sigma_\xi$ is a linear mapping,
\[
d_y\sigma_\xi = \kappa_{\sigma_\xi(y)} \circ \sigma_\xi \circ (\kappa^\S_y)^{-1},
\]
where $y\in T_{\pi(\xi)}\S$ and  $\kappa_y^\S : T_{\pi(\xi)}\S \to T_yT_{\pi(\xi)}\S$ is the canonical identification.
Therefore, equation~\eqref{eq:qclm1} implies that
\[
d_{(x,z)}\vp\circ \kappa_{(x,z)}\circ \sigma_\xi = d_{\sigma_\xi^{-1}(x)}\brk{\vp_z\circ\sigma_\xi}\circ \kappa^\S_{\sigma_\xi^{-1}(x)}.
\]
Fixing $z$, we have that on the fiber $\dpar\times \{z\}$,
\[
\left(\pi^*U\right)_\xi (A\circ \sigma_\xi+d\vp\circ \kappa\circ \sigma_\xi)=
(\sigma^{-1}_\xi)^* \brk{U_{\pi(\xi)} \brk{A\circ \sigma_\xi+d\brk{\vp_z\circ\sigma_\xi}\circ \kappa^\S}}.
\]
Denoting by $p$ the mapping $\sigma_\xi^{-1}(\dpar)\to \{z\}$, we then have, using the change of variables formula $\Pperp_\star \brk{\sigma_\xi^{-1}}^\star = p_\star$,
\beq
\label{eq:qclm2}
\begin{split}
&\Pperp_\star (\left(\pi^*U\right)_\xi (A\circ \sigma_\xi+d\vp\circ \kappa\circ \sigma_\xi)\opar)\\
	&\quad = \Pperp_\star \brk{(\sigma^{-1}_\xi)^* \brk{U_{\pi(\xi)} \brk{A\circ \sigma_\xi+d\brk{\vp_z\circ\sigma_\xi}\circ \kappa^\S}}(\sigma^{-1}_\xi)^\star(\sigma^{-1}_\xi)_\star\opar}\\
	&\quad = p_\star \brk{U_{\pi(\xi)} \brk{A\circ \sigma_\xi+d\brk{\vp_z\circ\sigma_\xi}\circ \kappa^\S}(\sigma^{-1}_\xi)_\star\opar}.
\end{split}
\eeq
Since up to a constant, $(\sigma^{-1}_\xi)_\star\opar$ is just the form $\VolumeS|_{\pi(\xi)}\circ \Lambda^{n-k}(\kappa^\S)^{-1}$, the quasiconvexity of $U$ implies that
\beq
\label{eq:qclm3}
\begin{split}
&p_\star \brk{U_{\pi(\xi)} \brk{A\circ \sigma_\xi+d\brk{\vp_z\circ\sigma_\xi}\circ \kappa^\S}(\sigma^{-1}_\xi)_\star\opar}\\
	&\qquad \ge U_{\pi(\xi)} \brk{A\circ \sigma_\xi} p_\star \brk{(\sigma^{-1}_\xi)_\star\opar} =
		\left(\pi^*U\circ \sigma^*\right)_\xi(A) \Pperp_\star \brk{\opar},
\end{split}
\eeq
where in the last step we used again the change of variables formula. 

Combining equations~\eqref{eq:qclm2}-\eqref{eq:qclm3}, and inserting them into equation~\eqref{eq:qclm}, we have
\[
\begin{split}
\int_{D_\xi} \left(\pi^*U\circ \sigma^*\right)_\xi (A+d\vp\circ \kappa)\omega_\xi & \ge
	\left(\pi^*U\circ \sigma^*\right)_\xi(A) \int_{\dperp} \Pperp_\star \brk{\opar}\wedge\operp \\ 
	& = \left(\pi^*U\circ \sigma^*\right)_\xi(A) \int_{D_\xi} \omega_\xi,
\end{split}
\]
which completes the proof.
\end{proof}

\paragraph{Step 3: $I_0(F) \le I(F)$ when $F$ satisfies both the regularity and the boundary conditions}

\begin{proposition}
$I_0(F)\le I(F)$ for every $F\in W^{1,p}_{bc}(\S;\R^n)$.
\end{proposition}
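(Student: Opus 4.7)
The plan is to construct a recovery sequence $f_h\in W^{1,p}_{bc}(\W_h;\R^n)$ with $f_h\to F$ in $L^p$ and $\limsup_{h\to 0}I_h(f_h)\le I(F)$, adapting the three-stage Le Dret--Raoult construction to the Riemannian fiber-bundle setting. First, using $\G$-lower-semicontinuity of $I_0$ together with continuity of $I$ on $W^{1,p}_{bc}(\S;\R^n)$ (a consequence of the growth and Lipschitz properties of $QW_0$ via Corollary~\ref{cy:qw0}), a diagonal/density argument reduces the problem to $F\in C^\infty(\overline{\S};\R^n)$ with $F|_{\partial\S}=F_{bc}$.

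For the first-level recovery sequence, I would use measurable selection (Appendix~\ref{ap:mst}) and the coercivity of $W$ to choose a section $R$ of $\NS^*\otimes\R^n$ with $W|_\S(dF\oplus R)=W_0(dF)$ a.e., arranged so that $R|_{\partial\S}=\qperp_{bc}$, and set
\[
\tilde f_h(\xi) := F(\pi(\xi)) + R(\pi(\xi))(\lambda(\xi)), \qquad \xi\in\W_h.
\]
Using the identities $d\pi\circ\sigma=\id$ and $d\pi\circ\iota=0$, a direct calculation shows that $(d\tilde f_h)\circ(\sigma\oplus\iota)$ agrees with $dF\oplus R$ along $\S$ and differs from it by $O(h)$ on all of $\W_h$. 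Invoking Corollary~\ref{cy:w sigma iota}, the Lipschitz property of $W$, and the Fubini structure $\Volumet=\eta\wedge\omega$ (Lemmas~\ref{lm:resc} and~\ref{lm:propeo}) then yields $\lim_{h\to 0} I_h(\tilde f_h) = \dashint_\S W_0(dF)\,\VolumeS$, so $\tilde f_h$ is a recovery sequence for the $W_0$-integral.

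To upgrade from $W_0$ to $QW_0$, I fix $\eta>0$ and invoke the definition of the quasiconvex envelope: for each $p\in\S$ there exists $\varphi_{p,\eta}\in W^{1,\infty}_0(D_p;\R^n)$ on a unit ball $D_p\subset T_p\S$ with $\dashint_{D_p} W_0(dF(p)+d\varphi_{p,\eta})\le QW_0(dF(p))+\eta$. Via a Vitali cover of $\S$ by small geodesic balls on which $F$ is nearly affine, transport the $\varphi_{p_i,\eta}$ through the exponential maps of $\S$, rescale them, extend to $\W_h$ as $\pi$-pullbacks (constant along normal fibers), and add to $\tilde f_h$ to form $f_h^\eta$. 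Homogeneity of $W$ over fibers (Lemma~\ref{lm:ww0}) together with Corollary~\ref{cy:w sigma iota} makes the normal integration harmless, so that $\limsup_{h\to 0}I_h(f_h^\eta)\le \dashint_\S QW_0(dF)\,\VolumeS+\eta$. A diagonal argument in $\eta\to 0$, combined with a standard cut-off of the oscillations in a boundary strip of width $\delta_h\to 0$ to restore the exact boundary data (which costs only $o(1)$ energy by the growth of $W$), produces the desired recovery sequence.

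The main obstacle I expect is the quasiconvexification step: the local in-plane perturbations must be lifted globally to $\W_h$ in a manner compatible with the $\sigma\oplus\iota$ decomposition, and the normal direction must integrate out harmlessly. Homogeneity of $W$ over fibers is essential here, for without it, extending oscillations constantly along the normal coordinate would not yield the correct quasiconvexified energy; the Fubini identification $\Volumet=\eta\wedge\omega$ along with the rescaling properties of $\mu_h$ are precisely what reduces the computation to an in-plane integral on $\S$ modulo controllable $O(h)$ errors. The treatment of the boundary cut-off, although routine, also requires care so that it does not destroy either the $L^p$ convergence or the boundary value $F_{bc}\circ\pi + \pi^*\qperp_{bc}\circ\lambda$.
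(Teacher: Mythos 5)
Your overall strategy --- an explicit recovery sequence realizing $QW_0$ --- is a legitimate alternative in principle, but as written it has two genuine gaps, and the paper deliberately takes a different route to avoid exactly these.

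First, the measurable selection only produces a \emph{measurable} optimal normal section $R$ with $W|_\S(dF\oplus R)=W_0(dF)$; it is not smooth, not even Sobolev, so the map $\tilde f_h(\xi)=F(\pi(\xi))+R(\pi(\xi))(\lambda(\xi))$ need not lie in $W^{1,p}(\W_h;\R^n)$ (its differential involves derivatives of $R$ along $\S$), and imposing $R|_{\partial\S}=\qperp_{bc}$ on a merely measurable section is not meaningful. The paper circumvents this by building $f_h=F\circ\pi+\pi^*\qperp\circ\lambda$ only with \emph{smooth} sections $\qperp$ matching the boundary data, obtaining $I_0(F)\le\dashint_\S W|_\S(dF\oplus\qperp)\,\VolumeS$ for each such $\qperp$, and then passing to the infimum over all $\qperp\in L^p(\S;\NS^*\otimes\R^n)$ by density and the Lipschitz property of $W$; the measurable selection (Lemma~\ref{lm:technical}) is used only to identify that infimum with $\dashint_\S W_0(dF)\,\VolumeS$, never to build a configuration.

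Second, and more seriously, your quasiconvexification step does not close. After adding the in-plane oscillations $\varphi$ to $\tilde f_h$, the energy density becomes (up to $O(h)$) $W|_\S\bigl((dF+d\varphi)\oplus R\bigr)$, and by Lemma~\ref{lm:ww0} this \emph{dominates} $W_0(dF+d\varphi)$ --- the inequality points the wrong way for an upper bound --- because $R$ was chosen optimal for $dF$, not for the oscillating gradient $dF+d\varphi$. To make the normal integration ``harmless'' you would have to let the normal component oscillate in tandem with $\varphi$, which reintroduces the regularity problem of the first gap at every scale of the Vitali construction. The paper avoids the oscillation construction entirely: having established $I_0(F)\le G(F):=\dashint_\S W_0(dF)\,\VolumeS$, it observes that $I_0$, being a $\G$-limit, is automatically lower semicontinuous in the strong $L^p$ topology, hence $I_0\le\G\wt G$; Lemma~5 of \cite{LR95} identifies this with the weak-$W^{1,p}$ relaxation $\G_w G$, which by the Acerbi--Fusco relaxation theorem (Theorem~\ref{tm:qc2}) equals $\dashint_\S QW_0(dF)\,\VolumeS=I(F)$. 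If you want to keep your direct construction, you are essentially re-proving Theorem~\ref{tm:qc2} inside the recovery sequence, and you must additionally handle the coupled optimization of the normal component; the indirect argument gets $W_0\rightsquigarrow QW_0$ for free.
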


\begin{proof}
Let $F\in W^{1,p}_{bc}(\S;\R^n)$ and consider a sequence $f_h\in W^{1,p}_{bc}(\W_h;\R^n)$ defined by
\[
f_h = F\circ\pi + \pi^* \qperp\circ\lambda,
\]
where $\qperp\in \{r\in \Gamma(\S; \NS^* \otimes\R^n): r|_{\partial\S}=\qperp_{bc}\}$ (a simple argument using a partition of unity of $\S$ shows that this set is non-empty). That is, for every $\xi\in\W_h$,
\[
f_h(\xi) = F(\pi(\xi)) + \qperp_{\pi(\xi)}(\xi).
\]

It is easy to see that $f_h\to F$ in $L^p$, hence by the lower-semicontinuity property of the $\Gamma$-limit:
\begin{equation}
\label{eq:I_0(F)1}
I_0(F) \le \liminf_{h\to 0} I_h(f_h) = \liminf_{h\to0} \dashint_{\W_h} W(df_h) \, \Volume.
\end{equation}
From Proposition 6.2 in \cite{KS14}, we have that
\[
|df_h\circ\Pi - \pi^*(dF\oplus\qperp)|\le Ch(1+|df_h|).
\]
It follows from the Lipschitz and the homogeneity over fibers properties of $W$ that 
\[
\begin{split}
\dashint_{\W_h} W(df_h) \, \Volume &= \dashint_{\W_h} \pi^*\left.W\right|_\S(df_h\circ\Pi) \, \Volume  \\
&= \dashint_{\W_h} \pi^*\left.W\right|_\S(\pi^*(dF\oplus\qperp)) \, \Volume + O(h) \\
	&=\int_{\S} \left.W\right|_\S(dF\oplus\qperp)\,\frac{\pi_\star\Volume}{|\W_h|} +O(h),
\end{split}
\]
where in the last step we used Fubini's theorem to first integrate over the fibers.
Using Lemma \ref{lm:cm} to evaluate $\pi_\star\Volume/|\W_h|$, Equation~\eqref{eq:I_0(F)1} reduces to
\[
I_0(F) \le \dashint_{\S} \left.W\right|_\S(dF\oplus\qperp)\, \VolumeS.
\]
This inequality holds for every $\qperp\in \{r\in \Gamma(\S; \NS^* \otimes\R^n): r|_{\partial\S}=\qperp_{bc}\}$.
Since $\{r\in \Gamma(\S; \NS^* \otimes\R^n): r|_{\partial\S}=\qperp_{bc}\}$ is dense in $L^p(\S; \NS^* \otimes\R^n)$, it follows from the Lipschitz property of $W$  that
\begin{equation}
\label{eq:I_0(F)2}
I_0(F) \le \inf_{\qperp\in L^p(\S; \NS^* \otimes\R^n)} \dashint_{\S} \left.W\right|_\S(dF\oplus\qperp)\, \VolumeS.
\end{equation}

By the definition of $W_0$, there exists a function $\qperp_0:\S \to \NS^* \otimes\R^n$ such that
\begin{equation}
\label{eq:msW0}
\left.W\right|_\S(dF\oplus \qperp_0) = W_0(dF).
\end{equation}
Thus, it seems that we can bound the infimum on the right hand side of  \eqref{eq:I_0(F)2} with $\dashint_{\S} W_0(dF)\, \VolumeS$. There is however one caveat: there is not a priori guarantee that $\qperp_0$ is measurable.  Lemma~\ref{lm:technical} below proves that there exists $\qperp_0\in L^p(\S;\NS^*\otimes\R^n)$ that satisfies  \eqref{eq:msW0}, hence for every $F\in  W^{1,p}_{bc}(\S; \R^n)$:
\begin{equation}
\label{eq:I_0(F)3}
I_0(F) \le \dashint_{\S} W_0(dF)\, \VolumeS \equiv G(F).
\end{equation}

We introduce the following notation: for a function $H: W^{1,p}_{bc}(\S; \R^n)\to \R$ we set  $\wt{H}: L^p(\S;\R^n) \to \R$ to be
\[
\wt{H}(F) =
\begin{cases}
H(F) & F\in W^{1,p}_{bc}(\S; \R^n) \\
\infty & \text{otherwise}.
\end{cases}
\]
Equation \eqref{eq:I_0(F)3} implies that $I_0(F)\le \wt{G}(F)$ for every $F\in L^p(\S;\R^n)$.
Since, moreover, $I_0$ is a $\G$-limit, it is lower-semicontinuous with respect to the strong $L^p$-topology, and therefore $I_0\le \G \wt{G}$, where $\Gamma \wt{G}$ denoted the lower-semicontinuous envelope (with respect to the same topology) of $\wt{G}$.

Next denote by $\G_w G$ the sequential lower-semicontinuous envelope of $G$ with respect to the \emph{weak} topology in $W^{1,p}_{bc}(\S; \R^n)$. Lemma 5 in \cite{LR95} implies that $\wt{\Gamma_w G} = \Gamma \wt{G}$, hence $I_0\le \wt{\G_w G}$; in particular, for $F\in W^{1,p}_{bc}(\S;\R^n)$, $I_0(F) \le \G_w G(F)$.
Finally, it follows from \cite{AF84} that 
\[
\G_w G(F) = \dashint_{\S} QW_0(dF)\, \VolumeS = I(F),
\]
which completes the proof.
\end{proof}

\begin{lemma}
\label{lm:technical}
There exists $\qperp_0\in L^p(\S;\NS^*\otimes\R^n)$ that satisfies
\[
\left.W\right|_\S(dF\oplus \qperp_0) = W_0(dF).
\]
\end{lemma}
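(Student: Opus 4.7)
The plan is to use a measurable selection theorem. For each $x\in\S$, define the multifunction
\[
\Phi(x) = \BRK{r\in(\NS^*\otimes\R^n)_x : \left.W\right|_\S(dF(x)\oplus r) = W_0(dF(x))}.
\]
The coercivity condition on $W$ and the fact that $W_0$ is continuous (Lemma~\ref{lm:w0}) imply that for each $x$ the set $\Phi(x)$ is nonempty and closed --- nonemptiness follows from the observation in the statement of Theorem~\ref{tm:main} that the minimum defining $W_0$ is attained, and closedness follows from continuity of $r\mapsto W|_\S(dF(x)\oplus r)$. Moreover, coercivity shows $\Phi(x)$ is bounded, hence compact.

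My first step is to verify that $\Phi$ is a measurable multifunction in the sense of the measurable selection theorem (the variant from the appendix, or Kuratowski--Ryll-Nardzewski). After trivializing $\NS^*\otimes\R^n$ on a countable atlas of $\S$ --- so that we may locally regard $\Phi$ as taking values in a fixed Euclidean space --- measurability reduces to showing that the graph of $\Phi$ (as a subset of the total space) is measurable. This follows because: (i) $W|_\S$ is continuous, hence a Carath\'eodory function of $(x,q,r)$; (ii) $dF$ is measurable, so $x\mapsto W|_\S(dF(x)\oplus r)$ is a Carath\'eodory function of $(x,r)$; and (iii) $W_0\circ dF$ is measurable because $W_0$ is continuous. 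The graph $\{(x,r) : W|_\S(dF(x)\oplus r) - W_0(dF(x)) = 0\}$ is therefore the zero set of a Carath\'eodory function, hence measurable.

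Once measurability is established, apply the measurable selection theorem to obtain a measurable section $\qperp_0$ of $\NS^*\otimes\R^n$ with $\qperp_0(x)\in\Phi(x)$ for a.e.\ $x\in\S$. By construction, $W|_\S(dF\oplus\qperp_0) = W_0(dF)$ a.e.

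Finally, to upgrade measurability to $L^p$-integrability, I invoke coercivity of $W$ pointwise:
\[
\alpha\,|dF(x)\oplus\qperp_0(x)|^p - \beta \le W|_\S(dF(x)\oplus\qperp_0(x)) = W_0(dF(x)) \le C(1+|dF(x)|^p),
\]
where the last inequality uses the growth bound on $W_0$ from Lemma~\ref{lm:w0}. Since $|dF\oplus\qperp_0|^p$ is comparable to $|dF|^p + |\qperp_0|^p$ (with constants depending only on the dimensions), we obtain $|\qperp_0(x)|^p \le C'(1+|dF(x)|^p)$ a.e., and integrability of the right-hand side (because $F\in W^{1,p}$) gives $\qperp_0\in L^p(\S;\NS^*\otimes\R^n)$, as required.

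The main obstacle is the measurability step: one must set up the measurable selection machinery in the vector-bundle setting $\NS^*\otimes\R^n\to\S$ rather than a fixed Euclidean target. This is handled by trivializing the bundle over a countable atlas and gluing the local selections measurably; nothing deeper than classical measurable selection is required once the bundle is trivialized.
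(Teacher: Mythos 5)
Your proof is correct and follows the same overall strategy as the paper: select a measurable fiberwise minimizer of $r\mapsto W|_\S(dF\oplus r)$ and then use coercivity plus the growth bound on $W_0$ to upgrade measurability to $L^p$ (your final integrability estimate is essentially identical to the paper's). The one place where you genuinely diverge is the treatment of the non-compact fiber. The paper's appendix only establishes a measurable selection theorem for fiber bundles with \emph{compact} fiber (Theorem~\ref{tm:mst} and Corollary~\ref{cy:mst}), so it cannot select over the whole vector bundle at once; instead it partitions the base into the sets $\S_N=\{x:|d_xF|^p<N\}$, on each of which coercivity confines the minimizers to a fixed closed ball, applies the compact-fiber theorem there, and glues the countably many selections. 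You instead form the argmin multifunction $\Phi$, show it is closed-valued with measurable graph (via the Carath\'eodory property of $(x,r)\mapsto W|_\S(dF(x)\oplus r)-W_0(dF(x))$ after trivialization), and invoke a closed-valued selection theorem of Kuratowski--Ryll-Nardzewski/Aumann type. That is a valid and arguably cleaner route, but it relies on a strictly stronger selection theorem than the one the paper develops (and on completeness of the Lebesgue-type $\sigma$-algebra on $\S$ for the graph-measurability version); if you want to stay within the paper's toolbox you would need the $\S_N$ truncation, which is exactly the caveat the paper flags before stating the lemma.
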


\begin{proof}
We first prove that there exists a measurable $\qperp_0:\S\to\NS^*\otimes\R^n$ such the equality above holds, and then prove that it is in $L^p$.

Define $\NW = W|_\S(dF\oplus\cdot): \NS^* \otimes\R^n \to \R$.
We are looking for a measurable section $\qperp_0$ that minimizes $\NW$ on every fiber.
The measurable selection theorem (Theorem~\ref{tm:mst}) deals with the existence of such measurable sections.
However, while $\NW$ satisfies the regularity assumptions in the theorem
(it is a Carath\'eodory function as (i) it is fiber-wise continuous due to the continuity of $W|_\S$, and (ii) for every smooth section $\eta\in\G(\S;\NS^* \otimes\R^n)$,  $\NW\circ\eta$ is measurable as a composition of a continuous and a measurable function (see Appendix~\ref{ap:mst})),
the measurable selection theorem  cannot be applied directly to $\NW$, as the fiber is a vector space and hence not compact.
We therefore proceed by using the coercivity and growth properties of $W$ to overcome this problem of non-compactness.

The coercivity and growth properties imply that for every $x\in\S$ and $\xi\in \NS^* \otimes\R^n$,
\[
\alpha |\xi|^p - \beta < \NW(\xi)
\]
and
\[
\inf_{\eta\in \brk{\NS^* \otimes\R^n}_x}  \NW(\eta) < C(1 + |d_x F|^p).
\]
Hence, if we denote by $\S_N$ the set $\{x\in \S : |d_x F|^p < N\}$, the measurable selection theorem (Corollary~\ref{cy:mst}) can be applied to $\NW|_{\S_N}$, since the minimizer on every fiber lies in a ball of radius $\brk{\frac{C(1+N)+\beta}{\alpha}}^{1/p}$. Denote this measurable minimizer by $\qperp_{0,N}$, and construct $\qperp_0$ by
\[
\qperp_0(x) = \qperp_{0,N}(x), \qquad N=\min\{M\in\mathbb{N} : x\in \S_M\}.
\]
Since, up to a null set, $\cup_N \S_N = \S$, $\qperp_0$ is well defined on almost every point in $\S$. It is obviously measurable, since for every $N$, $\qperp_{0,N}$ is measurable.

Finally, $\qperp_0\in L^p(\S; \NS^* \otimes\R^n)$ since
\[
\begin{split}
\alpha\int_\S |\qperp_0|^p\,\VolumeS -\beta\cdot \Vol(\S) &\le \int_\S W|_\S (dF\oplus \qperp_0)\,\VolumeS \\
	&\le \int_\S W(dF\oplus 0 )\, \VolumeS \\
	&\le C\int_\S (1 + |dF|^p) \,\VolumeS < \infty.
\end{split}
\]
\end{proof}

We have thus completed the proof of Theorem~\ref{tm:main}. We finish this section by a proof of {the} main corollary:

\begin{quote}
\emph{
Let $f_h\in W_{bc}^{1,p}(\W_h;\R^n)$ be a sequence of (approximate) minimizers of $I_h$. Then $(f_h)$ is a relatively compact sequence (with respect to the strong $L^p$ topology), and all its limits points are minimizers of $I$.
Moreover,
\[
\lim_{h\to0} \,\,\,\inf_{L^p(\W_h;\R^n)} I_h(\cdot) = \min_{L^p(\S;\R^n)} I(\cdot).
\]
}
\end{quote}

\begin{proof}
Let $f_h$ be a sequence of approximate minimizers of $I_h$. 
We first prove that it is relatively compact, i.e., that every subsequence (not relabeled) of $f_h$ has a subsequence that converges in $L^p$.

Let $g\in W_{bc}^{1,p}(\S;\R^n)$ be arbitrary and let $g_h\in L^p(\W_h;\R^n)$ be a recovery sequence for $g$. Then,
\[
\inf_{L^p} \,\,I_h(\cdot) \le I_h(g_h) \limarrow_{h\to0} I(g)  <\infty ,
\]
due to the growth property of $QW_0$.  
This shows  that $\inf_{L^p} \,\,I_h(\cdot)$ is bounded.

It follows that $I_h(f_h)$ is bounded, hence by coercivity $df_h$ is uniformly bounded in $L^p$, and together with the Poincar\'e inequality,   $f_h$ is uniformly bounded in $W^{1,p}$. Lemma \ref{lm:wk w1p conv}  implies the existence of a  subsequence $f_{h}\to F$ in $L^p$, proving the relative compactness of $f_h$.

We now prove that $F$ is a minimizer of $I$. Let $g\in L^p(\S;\R^n)$ be an arbitrary function, and  let $g_h\in L^p(\W_h;\R^n)$ be a recovery sequence for $g$. Therefore,
\[
I(g) = \lim_{h\to0} I_h(g_h) \ge \lim_{h\to0} \inf_{L^p} I_h(\cdot) = \lim_{h\to0} I_h(f_h) \ge I(F),
\]
where the last inequality follows from the lower-semicontinuity property of $\G$-convergence.
Since $g$ is arbitrary, $F$ is a minimizer of $I$.
Moreover, by choosing $g=F$ we conclude that
\[
I(F)  = \lim_{h\to0} \inf_{L^p} I_h(\cdot).
\]
\end{proof}

\section{Discussion}

This paper generalizes the work of Le Dret and Raoult \cite{LR95,LR96} to a general Riemannian setting and general dimension and co-dimension, hence is applicable to slender pre-stressed bodies. We now emphasize the main differences between the present analysis and the prior work that was derived in the Euclidean setting; these can be partitioned into analytical issues and modeling issues. 

\paragraph{Analytical issues}

Thin bodies are modeled as a family of tubular neighborhoods $\W_h$ of a Riemmanian manifold $(\calM,\go)$, that converges to a lower-dimensional submanifold $\S$. Accordingly, we defined a notion of convergences of functions $L^p(\W_h;\R^n)$ to functions $L^p(\S;\R^n)$.  The fact that configurations for different $h$ are defined over different functional spaces requires only minor adaptations in the $\G$-convergence approach, since the latter is not affected by the Riemannian structure. Other analytic notions, however, such as quasiconvexity and measure theoretic issues require a more detailed attention. 

The $\G$-limit of a sequence of functionals is lower-semicontinuous. As weak lower-semicontinuity of an integral functional is closely related to the quasiconvexity of the integrand, we had to properly define the notion of quasiconvexity of functions over manifolds (Definition~\ref{df:QC}) and show that the classical results in \cite{AF84} remains valid in this settings (Appendix~\ref{appendix:qc}). As in the Euclidean case, the limit energy density $QW_0$ is the quasiconvex envelope of $W_0$, which is a projection of the original energy density $W$ to the limiting submanifold.

For the energy functional to be well-defined, the density has to be sufficiently regular. We assume $W$ to be continuous and show that this also implies the continuity of $W_0$ (Lemma~\ref{lm:w0}). The more challenging step is to show that $QW_0$ is sufficiently regular, and specifically, a Carath\'eodory function. To do so, we need to properly define  Carath\'eodory functions over fiber bundles  (see Appendix~\ref{ap:mst}), and show that the quasiconvex envelope of a Carath\'eodory function over fiber bundles is again a Carath\'eodory function (Theorem~\ref{tm:qc2} and Corollary~\ref{cy:qw0}).
This notion of Carath\'eodory functions is also needed for the theorems regarding quasiconvexity and lower-semicontinuity mentioned above. 

Finally, a generalization of the related notion of normal integrand to functions over fiber bundles enables us to prove a generalization of a measurable selection theorem, which shows the existence of a measurable minimizing section (Theorem~\ref{tm:mst} and Corollary~\ref{cy:mst}), which is needed in the proof of Lemma~\ref{lm:technical}.

\paragraph{Modeling issues and properties of the limit energy density}

The main assumption on the energy density $W$ is its homogeneity over fibers (other than that, there are only regularity assumptions). 
In particular, we do not assume frame-indifference or isotropy. However, if the energy density does satisfy either of them, so does the limit energy density $QW_0$; the proofs are essentially the same as in Theorems 9 and 13 in \cite{LR95}. 
Since our setting is not necessarily Euclidean, isotropy here means invariance of the energy density under orientation preserving isometries of the relevant manifold ($(\calM,\go)$ in the case of $W$, $(\S,\go|_\S)$ in the case of $QW_0$).
If $W$ satisfies frame indifference, the frame indifference of $QW_0$ implies that the limit functional $I$ depends on an immersion $F\in W^{1,p}_{bc}(\S;\R^n)$, only through the pullback metric on $\S$ induced by $F$, that is $F^\star \euc$. In other words, as expected, the only contribution to the membrane energy is from stretching of the limiting submanifold, in contrast to bending dominated limits, such as in \cite{KS14}.

The absence of bending contributions to the energy holds even without assuming frame indifference, as the limiting energy functional depends only on first derivatives of an immersion of the limiting submanifold $\S$. 
Moreover, the membrane energy does not ``know'' whether it is a limit of a Euclidean or a non-Euclidean problem, in the following sense. Since $\dim \S < n$, the Nash-Kuiper embedding theorem implies that $\S$ can be $C^1$-isometrically embedded in the Euclidean space $\R^n$. 
Consider $(\S,\go|_\S)$ as a sub-manifold of $\R^n$, and
let $\W_h'$ be its tubular neighborhoods; they are Euclidean shells. We may then define an energy density $W'$ on 
$\W_h'$, with $(W')_0=W_0$. The limiting membrane model for $\S$ as a submanifold of $(\R^n,\euc)$ with energy density $W'$ is the same as the one obtained for $\S$ as a submanifold of $(\calM,\go)$ with energy density $W$.

The homogeneity over fibers assumption enables us to relate between energy densities on the limit submanifold $\S$ and on the tubular neighborhoods $\W_h$, and is therefore essential to the proof of Proposition \ref{pn:I_0>I} (it can be slightly weakened as long as Corollary \ref{cy:w sigma iota} holds for some positive power of $h$).
Indeed, in \cite{BFF00} and \cite{BF05} it is shown that for Euclidean plate membranes the limit energy density is substantially different (and more complicated) when the inhomogeneity is in the normal directions (when choosing coordinates, homogeneity over fibers allows inhomogeneity only in the tangent directions).
Note also that while homogeneity or inhomogeneity is usually stated in a specific coordinate system, our definition does not depend on the coordinate system, hence it reveals the geometric essence of this notion. 

The stronger and more common notion of homogeneity (which was assumed in \cite{LR95,LR96}), implies that the energy density at one point determines it everywhere. This notion can also be generalized to the non-Euclidean case via parallel transport.
Note, however, that unlike the Euclidean case,  parallel transport is generally path-dependent. Thus, while in the Euclidean case every energy density at a point extends to a homogeneous energy density over the entire manifold, this is not so in the non-Euclidean case (however, there are homogeneous energy densities for every metric; the prototypical energy density \eqref{eq:Wproto} is such).
This issue does not arise in our analysis, as homogeneity over fibers implies a specific choice of non-intersecting paths (the fibers) along which parallel transport is calculated.

The results of this paper are also applicable to slender bodies of varying thickness, considered in \cite{BFF00}.
Varying thickness in this case means that the thickness of $\W_h$ is equal to $h$ times some continuous function $t:\S\to(0,\infty)$. In this case $\VolumeS$ in the limiting density has to be replaced by $t\,\VolumeS$.

\appendix

\section{Measurability issues on Riemannian manifolds}
\label{ap:mst}

The main result in this section is a generalization of a measurable selection theorem to fiber bundles over manifolds. First, we give some basic definitions.

\subsection{Normal integrands and Carath\'eodory functions}

\begin{definition}
Let $\calM$ be an $m$-dimensional differentiable manifold. A function $f:\calM\to\bar{\R}$ is \emph{measurable} if for every chart $X:U\subset\R^m\to\calM$, $f\circ X:U\to\bar{\R}$ is (Lebesgue) measurable.
Similarly, if $\calN$ is an $n$-dimensional differentiable manifold, a function $f:\calM\to\calN$ is measurable if for every chart $Y:V\subset\R^n\to\calN$, $Y^{-1}\circ f\circ X$ is (Lebesgue) measurable on its domain of definition.
\end{definition} 

A standard argument shows that it is enough to check measurability over an atlas. This definition gives a natural notion of Lebesgue-induced measurable subsets of a differentiable manifold (which obviously contain the Borel $\sigma$-algebra); for short, we will call these subsets Lebesgue measurable. We may therefore extend our definition of measurability to functions $f:A\to\N$, where $A$ is a (measurable) subset of $\calM$. Again, similar to the Euclidean case, if $\{A_i\}$ is a countable measurable partition of $\calM$, and $f_i:A_i\to\N$ are measurable for every $i$, then $f=\cup f_i : \calM \to \N$ is a measurable function.

A Riemannian metric on the differentiable manifold $\calM$ induces a measure on this Lebesgue $\sigma$-algebra. As in the Euclidean case, the corresponding $L^p$ space coincides with the completion of the smooth functions with {respect} to the $L^p$ norm.

We now define the notions of normal integrands and Carath\'eodory functions.
Let $F\subset \R^l$ be a Borel set, and let $\pi:E\to\calM$ be a fiber bundle with fiber $F$. we define a ``hybrid" $\sigma$-algebra on $E$ as the $\sigma$-algebra generated by the Borel $\sigma$-algebra on $F$ and the Lebesgue $\sigma$-algebra on $\calM$.

\begin{definition}
Let $\pi:E\to\calM$ be as above. A function $f:E\to\bar{\R}$ is a \emph{normal integrand} if
\begin{enumerate}
\item $f$ is measurable with respect to the hybrid $\sigma$-algebra.
\item for almost every $p\in\calM$, $f|_{E_p}$ is lower-semicontinuous.
\end{enumerate}
\end{definition}

A normal integrand satisfies the following property: if $\rho$ is a measurable section of $E$, then $f\circ\rho:\calM\to\bar{\R}$ is measurable.
Note that the definition of a normal integrand can be extended to the case where $\pi:E\to A$ is a vector bundle over some measurable subset $A\subset\calM$.

\begin{definition}
Let $\pi:E\to\calM$ be as above. A function $f:E\to\bar{\R}$ is a \emph{Carath\'eodory function} if
\begin{enumerate}
\item for every smooth section $\rho$ of $E$, $f\circ\rho:\calM\to\bar{\R}$ is measurable.
\item for almost every $p\in\calM$, $f|_{E_p}$ is continuous.
\end{enumerate}
\end{definition}

These definitions coincide with the classical definitions of normal integrands and Carath\'eodory functions over the trivial bundles $\R^n\times F$ (see e.g. \cite{ET76} or \cite{RW09}). A standard argument, based on a local trivialization of the bundle,  shows that a function is a normal integrand (resp. Carath\'eodory function) if and only if it is locally a normal integrand (resp. Carath\'eodory function) in the classical sense.
It can therefore be deduced that a function $f$ is Carath\'eodory if and only if both $f$ and $(-f)$ are normal integrands.

\subsection{A measurable selection theorem}

\begin{theorem}
\label{tm:mst}
Let $F\subset R^l$ be a compact set, and let $\pi:E\to\calM$ be a fiber-bundle with fiber $F$.
Let $f: E\to\bar{\R}$ be a normal integrand.
Then there exists a measurable section $\rho:\calM\to E$ such that for almost every $p\in \calM$
\[
f\circ \rho (p) = \min_{\xi\in E_p} \{ f(\xi)\}.
\]
\end{theorem}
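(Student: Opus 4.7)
The plan is to reduce Theorem~\ref{tm:mst} to the classical Euclidean measurable selection theorem (see e.g.\ \cite{ET76} or \cite{RW09}) by means of local trivializations of the bundle $\pi:E\to\calM$. Recall that the classical theorem states: if $\Omega\subset\R^m$ is Lebesgue measurable and $g:\Omega\times F\to\bar{\R}$ is a normal integrand with $F$ compact, then there exists a measurable map $\varrho:\Omega\to F$ with $g(x,\varrho(x))=\min_{\xi\in F}g(x,\xi)$ for almost every $x\in\Omega$; the existence of the minimum is guaranteed by the lower-semicontinuity of $g(x,\cdot)$ combined with the compactness of $F$.

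First I would use the paracompactness of $\calM$ and the local triviality of $\pi$ to choose a countable atlas $\{(U_i,X_i)\}_{i\in\bbN}$ of $\calM$, where each $U_i$ is the image of a chart $X_i:V_i\subset\R^m\to\calM$ and the bundle $\pi$ is trivial over $U_i$, with trivialization $\Phi_i:\pi^{-1}(U_i)\to U_i\times F$. I would then turn this cover into a countable Lebesgue-measurable partition $\{A_i\}_{i\in\bbN}$ of $\calM$ by setting $A_i=U_i\setminus\bigcup_{j<i}U_j$, so that each $A_i$ is a measurable subset of $U_i$ and $\calM=\bigsqcup_i A_i$ up to a null set.

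Next, for each $i$ I would pull back $f$ through the trivialization: define $g_i:X_i^{-1}(A_i)\times F\to\bar{\R}$ by
\[
g_i(x,\xi)=f\bigl(\Phi_i^{-1}(X_i(x),\xi)\bigr).
\]
The assumptions that $f$ is measurable with respect to the hybrid $\sigma$-algebra and that $f|_{E_p}$ is lower-semicontinuous for a.e.\ $p$ translate, under the smooth trivialization $\Phi_i$ and the chart $X_i$, into the statement that $g_i$ is a classical normal integrand on $X_i^{-1}(A_i)\times F$. Applying the classical measurable selection theorem to $g_i$ yields a Lebesgue-measurable $\varrho_i:X_i^{-1}(A_i)\to F$ minimizing $g_i(x,\cdot)$ pointwise (a.e.); pushing forward via $X_i$ and $\Phi_i^{-1}$ produces a measurable section $\rho_i:A_i\to E$ with $f\circ\rho_i(p)=\min_{\xi\in E_p}f(\xi)$ for a.e.\ $p\in A_i$. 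Finally, I would glue the pieces: $\rho:=\bigsqcup_i\rho_i:\calM\to E$ is well-defined on a set of full measure, and by the remark following the definition of measurability (a countable disjoint union of measurable maps on measurable sets is measurable), $\rho$ is measurable on $\calM$.

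The main obstacle I anticipate is the careful verification that $g_i$ is a normal integrand in the classical sense; i.e.\ that the ``hybrid'' measurability of $f$ on $E$ really corresponds, after local trivialization, to joint measurability with respect to the product $\sigma$-algebra of the Lebesgue $\sigma$-algebra on $X_i^{-1}(A_i)$ and the Borel $\sigma$-algebra on $F$. This requires the observation that because $\Phi_i$ and $X_i$ are smooth (hence Borel-measurable, hence measurable with respect to the hybrid structure), they transport normal integrands to normal integrands; this is precisely the local characterization of normal integrands on bundles remarked upon just before the theorem. Once that compatibility is in place, the remainder is bookkeeping.
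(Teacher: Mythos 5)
Your proposal is correct and follows essentially the same route as the paper: cover $\calM$ by a countable atlas over which the bundle trivializes, apply the classical Euclidean measurable selection theorem chart by chart, and glue the resulting local sections via a measurable (disjointified) partition of the cover. The only cosmetic difference is that you disjointify the cover before selecting, whereas the paper selects on each full chart and then takes the section with minimal index, which amounts to the same partition.
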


\begin{proof}
First, we cover $\calM$ with a countable atlas $X_i : U_i \to \calM$, where $U_i$ are open sets in $\R^n$, and let $Y_i : U_i \times F \to E$ be a corresponding atlas of $E$.
By the measurable selection theorem for trivial bundles (see e.g. \cite{ET76}), there exists a measurable $v_i : U_i\to F$ such that
\[
f\circ Y_i (x, v_i (x)) = \min_{a\in F} f\circ Y_i (x,a) 
\qquad \forall  x\in U_i
\]
Define $\rho_i(p) = Y_i(X_i^{-1}(p), \nu_i \circ X_i^{-1} (p))$. This is a measurable section of $E|_{X_i(U_i)}$ that satisfies
\[
f\circ \rho_i (p) = \min_{\xi\in E_p} \{ f(\xi)\} 
\qquad \forall p\in X_i(U_i).
\]
Define next a section $\rho:\calM\to E$ by
\[
\rho(p)=\rho_N(p), \qquad N=\min\{i\in\mathbb{N} : p\in U_i\}.
\]
$\rho$ is obviously measurable and satisfies $f\circ \rho (p) = \min_{\xi\in E_p} \{ f(\xi)\}$ for almost every $p\in\calM$.
\end{proof}

\begin{corollary}
\label{cy:mst}
The theorem also holds for a normal integrand $f':E'\to \bar{\R}$, where $\pi':E'\to A$ is a fiber bundle with a compact fiber $F$ over a base $A$ which is a measurable subset of $\calM$.
\end{corollary}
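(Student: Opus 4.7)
The plan is to mirror the proof of Theorem~\ref{tm:mst} essentially verbatim, with only the bookkeeping adjusted to accommodate the measurable base $A$ in place of the full manifold $\calM$. Since $\calM$ is second countable and $\pi':E'\to A$ is a fiber bundle, I would first cover $\calM$ by a countable atlas $X_i: U_i \to \calM$ chosen fine enough that $E'$ admits local trivializations $Y_i: A_i \times F \to \pi'^{-1}(A \cap X_i(U_i))$, where $A_i := X_i^{-1}(A \cap X_i(U_i)) \subset U_i$. Each $A_i$ is then a Lebesgue-measurable subset of $\R^n$.

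Next I would observe that through each trivialization, the normal integrand $f'$ pulls back to a normal integrand on $A_i \times F$ in the classical Euclidean sense. The classical measurable selection theorem (see \cite{ET76,RW09}) applies to normal integrands over measurable subsets of $\R^n$ with compact fiber $F$, so it produces a measurable map $v_i : A_i \to F$ satisfying
\[
f' \circ Y_i(x, v_i(x)) = \min_{a \in F} f' \circ Y_i(x,a) \qquad \text{for a.e.\ } x \in A_i.
\]
I would then define local sections $\rho_i(p) := Y_i(X_i^{-1}(p), v_i(X_i^{-1}(p)))$ on $A \cap X_i(U_i)$, and patch them together exactly as in the proof of Theorem~\ref{tm:mst}: set $\rho(p) := \rho_N(p)$ where $N = \min\{i \in \mathbb{N} : p \in A \cap X_i(U_i)\}$.

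The measurability of $\rho$ would then follow from the measurability of each $\rho_i$ together with the fact that the sets $A \cap X_i(U_i)$ form a countable measurable cover of $A$, partitioned by minimum-index selection; the fiber-wise minimality is inherited pointwise from the $v_i$. There is essentially no new obstacle --- the only item worth care is that the classical measurable selection theorem be applied with a measurable (rather than open or Borel) base $A_i \subset \R^n$, which is standard: either the theorem is cited in its measurable-base form directly, or one reduces to a Borel base by discarding a null subset of $A_i$. This reflects the fact that the proof of Theorem~\ref{tm:mst} never used the manifold structure of the base beyond the existence of a countable atlas intersected with a measurable set, a property which transfers without change to $A$.
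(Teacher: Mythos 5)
Your proof is correct, but it takes a different route from the paper. The paper's proof is a two-line reduction: extend $f'$ to a function $f$ on the whole bundle $E\to\calM$ by setting $f\equiv-\infty$ on $E\setminus E'$, observe that this extension is still a normal integrand (measurability with respect to the hybrid $\sigma$-algebra is preserved because $A$ is measurable, and the constant $-\infty$ is fiber-wise lower-semicontinuous), apply Theorem~\ref{tm:mst} to $f$, and restrict the resulting section to $A$; the fiber-wise minimum over $A$ is unchanged by the extension, and off $A$ the minimization is vacuous. You instead re-run the proof of Theorem~\ref{tm:mst} directly over the measurable base, which forces you to invoke the classical Euclidean measurable selection theorem over a merely measurable (rather than open) subset $A_i\subset\R^n$. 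That version is indeed standard, so nothing in your argument fails, but it is a strictly stronger citation than the paper needs: the extension trick lets the paper reuse the theorem as a black box and only ever apply the Euclidean selection theorem over open charts. Both arguments share the same implicit hypothesis --- that $E'$ is (or extends to) the restriction of a bundle that is locally trivial over neighborhoods in $\calM$ --- so neither is more rigorous on that point; the paper's is simply shorter, while yours is more self-contained in that it does not require assigning the value $-\infty$ and checking that the extended function remains a normal integrand.
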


\begin{proof}
Extend $f'$ to $f:E\to\bar{\R}$ by defining it to be $-\infty$ on $E\setminus E'$. $f$ is a normal integrand, hence we can apply the theorem {to} $f$, and then restrict the resulting $\rho:\calM\to E$ to $A$.
\end{proof}

\section{On quasiconvexity and lower-semicontinuity}
\label{appendix:qc}

In this section we prove Theorems~\ref{tm:qc1}--\ref{tm:qc2} that generalize classical results on the relation between quasiconvexity and lower-semicontinuity in the Riemannian setting.
The proofs are basically applications of the main results of \cite{AF84}. 
Let $(\calM,\go)$ be a Riemannian manifold of finite volume that can be covered by a finite number of charts, let  $U: T^*\calM\otimes\R^m \to \R$ be a Carath\'eodory function that satisfies the growth condition $-\beta \le U(q) \le C\brk{1+|q|^p}$. 
The growth condition and the finite number of charts assumption imply that when choosing coordinates, $U$ satisfies the growth assumptions (II.2) and (II.6)  in \cite{AF84} (actually, $U+\beta$ satisfies the conditions, but since we assumed that $\calM$ {has} finite volume, the addition of a constant to the integrand is immaterial).
First we prove Theorem~\ref{tm:qc1}:
\begin{quote}
\emph{
Under the above conditions , the functional $I_A:W^{1,p}(A;\R^m)\to \R$ defined by
\[
I_A(f) := \int_A U(df)\,\,\Volume,
\]
where $A\subset\calM$ is an open subset, is weakly sequential lower-semicontinuous for every $A$ if and only if $U$ is quasiconvex.
}
\end{quote}

\begin{proof}
First assume that $I_A$ is weakly sequential lower-semicontinuous in $W^{1,p}(A;\R^m)$ for some open $A\subset\calM$ contained in a coordinate neighborhood. 
Using coordinates, we apply Theorem [II.2] in \cite{AF84}, and get that $U$ is quasiconvex in $ T^*A\otimes\R^m$. Since quasiconvexity is a fiber-wise condition, if $U|_A$ is quasiconvex for every open $A\subset\calM$, then $U$ is quasiconvex (as in Definition~\ref{df:QC}).

Now assume that $U$ is quasiconvex. Let $f,f_n\in W^{1,p}(\calM;\R^m)$ such that $f_n\weakly{}f$. We can partition $\calM$, up to a null-set, into a finite number of disjoint open sets $\{A_i\}$ such that for every $i$, $A_i$ is contained in a coordinate neighborhood. 
Since $U$ is a Carath\'eodory quasiconvex function and satisfies the growth condition \eqref{eq:growthqc}, it follows from Theorem [II.4] in \cite{AF84} that $I_{A_i}$ is weakly sequential lower-semicontinuous in $W^{1,p}(\calM;\R^m)$. Therefore,
\[
\liminf_{n\to\infty} I_\calM(f_n) = \sum_i \liminf_{n\to\infty} I_{A_i}(f_n|_{A_i}) \ge \sum_i I_{A_i}(f|_{A_i}) = I_\calM(f),
\]
which shows that $I_{\calM}$ is weakly sequential lower-semicontinuous, and therefore $I_A$ is weakly sequential lower-semicontinuous for every open $A\subset\calM$.
\end{proof}

Next we prove Theorem~\ref{tm:qc2}:
\begin{quote}
\emph{Under the above conditions, the weakly sequential lower-semicontinuous envelope of the functional $I_\calM:W^{1,p}(\calM;\R^m)\to \R$ is $\G I_\calM:W^{1,p}(\calM;\R^m)\to \R$ given by
\[
\G I_\calM(f) := \int_\calM QU(df)\,\,\Volume,
\]
where $QU(q)=\sup\{V(q): V\le U \,\,\text{is quasiconvex}\}$ is the quasiconvex envelope of $U$; moreover $QU$ is a Carath\'eodory quasiconvex function.
}
\end{quote}

\begin{proof}
Let $A\subset\calM$ be a coordinate neighborhood. We apply Statement [III.7] in \cite{AF84} to get that the weakly sequential lower-semicontinuous envelope of $I_A$ is 
\[
\G I_A(f) := \int_A Q(U|_A)(df)\,\,\Volume.
\]
Because quasiconvexity is a fiberwise condition, the restriction of the quasiconvex envelope is the quasiconvex envelope of the restriction, and therefore we can replace $Q(U|_A)$ with $(QU)|_A$. Theorem [III.6] in \cite{AF84} implies that $(QU)|_A$ is a Carath\'eodory function, and since being a Carath\'eodory function is a local condition (see Appendix~\ref{ap:mst}), we have that $QU$ is indeed Carath\'eodory.
It is also quasiconvex, since generally the supremum of quasiconvex functions is quasiconvex (again, this is a fiberwise argument, and therefore it is true since it holds in the Euclidean case). 
By Theorem~\ref{tm:qc1}, $\G I_\calM$ is weakly sequential lower-semicontinuous, hence it is {bounded by} the sequentially lower-semicontinuous envelope of $I_\calM$, denoted by $\tI_\calM$.

Next, for a given $f\in W^{1,\infty}(A;\R^m) \subset W^{1,p}(A;\R^m)$, Statement [III.7] also implies that for every $\e>0$ there exists a sequence $f_n\in f + W^{1,\infty}_0(A;\R^m)$ such that $f_n-f\weakly{*}0$ in $W^{1,\infty}_0(A;\R^m)$ and $\liminf_n I_A(f_n) \le \G I_A(f) + \e$. This follows from the fact that in the notation of \cite{AF84},
\[
\G I_A(f) = \lim_{r\to\infty} F_0(r,f,A) = \lim_{r\to\infty} \Brk{\inf \{\liminf_{n\to\infty} I_A(f_k): f_n-f\weakly{*}0 \,\,\text{in}\,\, W^{1,\infty}_0(A;\R^m) \,\,\text{and}\,\, |du_n|_\infty\le r\}}.
\]
See also Theorem 3.8 in \cite{MS80}.

For the reverse inequality, let $\{A_i\}$ be a finite  partition (up to a null-set) of $\calM$, such that for every $i$, $A_i$ is an open set contained in a coordinate neighborhood, and let $f\in W^{1,\infty}(\calM;\R^m)$. Fix $\e>0$.
For every $i$, let $f^n_i \in f|_{A_i} + W^{1,\infty}_0(A_i;\R^m)$ be a sequence such that $f^n_i - f|_{A_i} \weakly{*}_{n}0$ in $W^{1,\infty}_0(A_i;\R^m)$ and
\[
\liminf_{n\to\infty} I_{A_i}(f^n_i) \le \G I_A(f|_{A_i}) + \frac{\e}{2^i}.
\]
Obviously, $f^n:=\cup_i f^n_i \weakly{*} f$ in $W^{1,\infty}(\calM;\R^m)$, and therefore
\[
\tI_\calM(f) \le \liminf_{n\to\infty} I_\calM(f^n) = \liminf_{n\to\infty} \sum_i I_{A_i}(f^n_i) \le  \sum_i \G I_A(f|_{A_i}) + \e = \G I_\calM (f) + \e,
\]
and since $\e$ is arbitrary, it shows that $\tI_\calM(f) \le \G I_\calM (f)$ for every $f\in W^{1,\infty}(\calM;\R^m)$. To extend this to $W^{1,p}(\calM; \R^m)$, observe that $W^{1,\infty}(\calM;\R^m)$ is dense in $W^{1,p}(\calM;\R^m)$ with respect to the strong $W^{1,p}$ topology, and that $\G I_\calM$ is continuous in $W^{1,p}(\calM;\R^m)$ with respect to this topology (see Proposition~\ref{pn:strcont} below).
Hence, given $f\in W^{1,p}(\calM;\R^m)$, let $f_n\in W^{1,\infty}(\calM;\R^m)$ such that $f_n\to f$, and we obtain that
\[
\tI_\calM(f) \le \liminf_{n\to\infty} \tI_\calM(f_n) =  \liminf_{n\to\infty} \G I_\calM(f_n) = \G I_\calM(f),
\]
which completes the proof.

\end{proof}

The strong continuity of $\G I_\calM$ follows from the following generalization of the Carath\'eodory continuity theorem:
\begin{proposition}
\label{pn:strcont}
Let $\calM$ and $U$ satisfy the assumptions as above. Then the functional $I_\calM:W^{1,p}(\calM;\R^m)\to\R$ is continuous with respect to the strong $W^{1,p}$ topology.
\end{proposition}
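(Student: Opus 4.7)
The plan is to mimic the classical Carath\'eodory continuity theorem (strong $W^{1,p}$ continuity of Nemytskii-type functionals under a $p$-growth condition), adapted to the Riemannian setting via the finite-atlas hypothesis. Let $f_n \to f$ in $W^{1,p}(\calM;\R^m)$; I want to show $I_\calM(f_n)\to I_\calM(f)$.

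First, since $f_n\to f$ in $W^{1,p}$, the derivatives satisfy $df_n \to df$ in $L^p(\calM; T^*\calM\otimes\R^m)$. I will argue by the Urysohn subsequence principle: it suffices to show that every subsequence of $(f_n)$ has a further subsequence along which $I_\calM$ converges to $I_\calM(f)$. Passing to such a subsequence (not relabelled), I may assume $df_n \to df$ pointwise almost everywhere on $\calM$ and that $|df_n|$ is dominated in $L^p$ by a single function $g\in L^p(\calM)$ (the standard $L^p$-convergence corollary of the Riesz--Fischer / Vitali argument, applied chart by chart, using the finite cover of $\calM$). Since $U$ is Carath\'eodory, applying it along the smooth section given locally by $df$ together with the a.e.\ convergence of $df_n$ yields $U(df_n)\to U(df)$ almost everywhere on $\calM$; the local version of Carath\'eodory continuity is precisely what was built into our fiber-bundle definition in Appendix~\ref{ap:mst}, and is checked by trivialising over a chart and invoking the Euclidean Carath\'eodory statement.

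To promote this pointwise convergence to $L^1$-convergence of the integrals, I will apply a generalised dominated convergence theorem. The two-sided growth bound $-\beta \le U(q) \le C(1+|q|^p)$ gives the pointwise majorant $|U(df_n)| \le C(1 + |df_n|^p) + \beta$, and the $L^p$-convergence of $df_n$ implies that $|df_n|^p \to |df|^p$ in $L^1(\calM,\Volume)$ (since $\calM$ has finite volume and the majorant sequence $|df_n|^p$ converges in $L^1$ to its a.e.\ limit; this is where the finite volume assumption enters). Hence the variable majorant $C(1+|df_n|^p)+\beta$ converges in $L^1$ to $C(1+|df|^p)+\beta$, and Fatou-type / generalised dominated convergence (Vitali's theorem, using the equi-integrability that $L^p$-convergence of $df_n$ provides) yields $\int_\calM U(df_n)\,\Volume \to \int_\calM U(df)\,\Volume$ along the subsequence. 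The Urysohn principle then gives convergence of the full sequence, establishing continuity.

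The main technical point, and the only nontrivial step, is the application of the generalised dominated convergence theorem with the non-constant majorant $|df_n|^p$: one must either invoke Vitali (requiring equi-integrability of $U(df_n)$, which follows from equi-integrability of $|df_n|^p$, itself a consequence of $L^p$ convergence) or use the elementary fact that if $h_n\to h$ a.e., $|h_n|\le G_n$, $G_n\to G$ in $L^1$, and $G\in L^1$, then $h_n\to h$ in $L^1$. Everything else is bookkeeping: the passage between $\calM$ and Euclidean charts is harmless because $U$ was defined to be Carath\'eodory in the fiber-bundle sense of Appendix~\ref{ap:mst}, which by construction is a local condition compatible with pullback through a chart.
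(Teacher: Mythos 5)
Your argument is correct, but it takes a different route from the paper. The paper's proof is a reduction: it partitions $\calM$ (up to a null set) into finitely many open sets $A_i$, each contained in a coordinate neighborhood, notes that $U|_{A_i}$ is Carath\'eodory and satisfies \eqref{eq:growthqc} in the chart, invokes the classical Euclidean Carath\'eodory continuity theorem on each piece, and sums the finitely many limits. You instead re-prove the Carath\'eodory continuity theorem directly on the manifold: subsequence extraction giving a.e.\ convergence of $df_n$ with an $L^p$ dominating function, fiberwise continuity of $U$ to get a.e.\ convergence of $U(df_n)$, and a generalized dominated convergence (or Vitali) argument with the variable majorant $C(1+|df_n|^p)+\beta$, closed off by the Urysohn subsequence principle. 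Both are sound; the paper's version is shorter because it outsources the analytic core to a citation, while yours is self-contained and makes explicit exactly which hypotheses are used where (fiberwise continuity for the a.e.\ convergence, $L^p$-convergence of $df_n$ for the $L^1$-convergence of the majorant). One small correction to your bookkeeping: the finite-volume hypothesis is needed to make the constant part $C+\beta$ of the majorant integrable, not for the $L^1$-convergence of $|df_n|^p$, which follows from $L^p$-convergence of $df_n$ alone; and the chart decomposition you dismiss as harmless is still doing real work, since the definition of Carath\'eodory functions on fiber bundles in Appendix~\ref{ap:mst} is verified locally, so measurability of $U(df_n)$ for the merely measurable section $df_n$ ultimately passes through a trivialization anyway.
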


\begin{proof}
This is an immediate consequence of the analogous Euclidean proposition (see e.g. \cite{Dal93}, Example 1.22).
Indeed, let $A_i$ be an open partition (up to a null set) of $\calM$, such that for every $i$, $A_i$ is contained in a coordinate neighborhood. 
For every $i$, $U|_{A_i}$ is a Carath\'eodory function that satisfies \eqref{eq:growthqc}, since $U$ does. Hence, by the Carath\'eodory continuity theorem, $I_{A_i}$ is strongly continuous in $W^{1,p}(A_i;\R^m)$.
Let $f,f_n\in W^{1,p}(\calM;\R^m)$ such that $f_n\to f$. In particular, for every $i$, $f_n|_{A_i} \to f|_{A_i}$ in $W^{1,p}(A_i;\R^m)$.
Therefore,
\[
\lim_{n\to\infty} I_\calM(f_n) = \lim_{n\to\infty} \sum_i I_{A_i}(f_n|_{A_i}) = \sum_i I_{A_i}(f|_{A_i}) = I_\calM (f).
\]
\end{proof}


\begin{acknowledgements}
We are grateful to R.V. Kohn for many discussions and useful advice.
\end{acknowledgements}

\bibliographystyle{spmpsci}      
\bibliography{arXiv_version.bbl}

\end{document}